\newtheorem{theorem}{Theorem}[section]
\newtheorem{lemma}[theorem]{Lemma}
\newtheorem{corollary}[theorem]{Corollary}
\newtheorem{conjecture}[theorem]{Conjecture}
\newtheorem{observation}[theorem]{Observation}
\newtheorem{claim}[theorem]{Claim}
\newtheorem*{claim*}{Claim}
\theoremstyle{definition}
\newtheorem{definition}[theorem]{Definition}
\newtheorem*{qu*}{Question}
\theoremstyle{remark}
\newcommand\N{\mathbb{N}}
\newcommand\E{\operatorname{\mathbb{E}}}
\newcommand\cA{\mathcal{A}}
\newcommand\cD{\mathcal{D}}
\newcommand\cE{\mathcal{E}}
\newcommand\cF{\mathcal{F}}
\newcommand\cG{\mathcal{G}}
\newcommand\cH{\mathcal{H}}
\newcommand\cI{\mathcal{I}}
\newcommand\cK{\mathcal{K}}
\newcommand\cL{\mathcal{L}}
\newcommand\cM{\mathcal{M}}
\newcommand\cN{\mathcal{N}}
\newcommand\cR{\mathcal{R}}
\newcommand\cS{\mathcal{S}}
\newcommand\cT{\mathcal{T}}
\renewcommand\Pr{\operatorname{\mathbb{P}}}
\newcommand\eps{\varepsilon}
\renewcommand\leq{\leqslant}
\renewcommand\geq{\geqslant}
\renewcommand\le{\leqslant}
\renewcommand\ge{\geqslant}
\renewcommand\to{\rightarrow}
\numberwithin{equation}{section}
	\def\eps{\varepsilon}
	\def\cX{\mathcal{X}}
	\def\Prob{\mathbb{P}}
	\def\Var{\mathrm{Var}}
	\def\N{\mathbb{N}}
	\def\cA{\mathcal{A}}
        \def\cW{\mathcal{W}}
	\def\<{\langle }
	\def\>{\rangle }
\begin{document}

\title{The Chromatic Number of Very Dense\\ Random Graphs}
\author{Zhifei Yan}

\address{IMPA, Estrada Dona Castorina 110, Jardim Bot\^anico,
Rio de Janeiro, 22460-320, Brazil}\email{zhifei.yan@impa.br}

\thanks{}

\begin{abstract}
The chromatic number of a very dense random graph $G(n,p)$, with $p \ge 1 - n^{-c}$ for some constant $c > 0$, was first studied by Surya and Warnke, who 
conjectured that the typical deviation of $\chi(G(n,p))$ from its mean is of order $\sqrt{\mu_r}$, 
where $\mu_r$ is the expected number of
independent sets of size $r$, and $r$ is maximal such that $\mu_r > 1$, except when $\mu_r = O(\log n)$. They moreover proved their conjecture in the case $n^{-2} \ll 1 - p = O(n^{-1})$. 

In this paper, we study $\chi(G(n,p))$ in the range $n^{-1}\log n \ll 1 - p \ll n^{-2/3}$, that is, when the largest independent set of $G(n,p)$ is typically of size 3. We prove in this case that $\chi(G(n,p))$ is concentrated on some interval of length $O(\sqrt{\mu_3})$, 
and for sufficiently `smooth' functions $p = p(n)$, that there are infinitely many values of $n$ such that $\chi(G(n,p))$ is not concentrated on any interval of size $o(\sqrt{\mu_3})$. We also show that $\chi(G(n,p))$ satisfies a central limit theorem in the range $n^{-1} \log n \ll 1 - p \ll n^{-7/9}$.
\end{abstract}
	
	\maketitle 
\section{Introduction}

Understanding the distribution of the chromatic number of the random graph $G(n,p)$ is one of the most fascinating (and notorious) problems in the study of random graphs. For constant $p \in (0,1)$, the asymptotic value of $\chi(G(n,p))$ was determined in 1987 by Bollob{\'a}s~\cite{B}, who proved that
\begin{align*}
\chi(G(n,p))=\bigg(\frac{1}{2}+o(1)\bigg)\frac{n}{{\log_{1/(1-p)}n}}   
\end{align*}
with high probability. Shamir and Spencer~\cite{SS} moreover showed that $\chi(G(n,p))$ is concentrated on an (unknown) interval of size $O(\sqrt{n})$. Their bound was later improved by a factor of $\log n$, by Alon (see~\cite[Section~7.9, Exercise 3]{Probabilisticmethod}).   


For very sparse random graphs, there has been much more progress over the past few decades. First, Shamir and Spencer~\cite{SS} showed that $\chi(G(n,p))$ is concentrated on a bounded number of values when $p < n^{-1/2-\varepsilon}$, and Alon and  Krivelevich \cite{AK} improved this to two-point concentration. 
Achlioptas and Naor~\cite{AN} determined the \emph{explicit} two values of $\chi(G(n,d/n))$ for constant $d$, and Coja-Oghlan, Panagiotou and Steger~\cite{CPS} later extended this result to three explicit values for all $p<n^{-3/4-\varepsilon}$. 

Given these results in the sparse case, it is natural to wonder whether $\chi(G(n,p))$ is also concentrated on a bounded number of values when $p$ is constant. In a stunning recent paper,  Heckel~\cite{H2} showed that the answer is no. More precisely, for infinitely many values of $n$ there is no interval of length $n^{1/4-\varepsilon}$ that contains $\chi(G(n,1/2))$ with high probability. This result has since been refined by Heckel and Riordan~\cite{HR}, who improved the lower bound on the concentration to $n^{1/2} (\log n)^{-3 + o(1)}$, within a poly-logarithmic factor of Alon's upper bound, again for some (unknown) infinite sequence of values of $n$. 


Heckel's brilliant idea is based on the influence of the number of independent sets of maximum size on the optimal colourings. A related heuristic suggests that the number of independent sets of size one less than the maximum should also have a significant effect on $\chi(G(n,1/2))$, and this led Bollob{\'a}s, Heckel, Morris, Panagiotou, Riordan and Smith to propose the so-called Zigzag Conjecture (see~\cite[Conjecture 8]{HR}). This conjecture implies that the concentration of $\chi(G(n,1/2))$ depends significantly on $n$, varying between $n^{1/4 + o(1)}$ and $n^{1/2 + o(1)}$, depending on the expected number of independent sets of maximum size. 

While the Zigzag Conjecture remains wide open, Surya and Warnke~\cite{SW} discovered that $\chi(G(n,p))$ exhibits a somewhat similar behaviour in the range $p > 1 - n^{-c}$, for some constant $c > 0$. More precisely, they showed that if $q = 1 - p$ satisfies $n^{-2} \ll q = O(n^{-1})$, then the concentration of $\chi(G(n,p))$ is of order $\Theta(q^{1/2}n)$. They also observed that if 
$$q \gg n^{-2/r} (\log n)^{1/{r \choose 2}},$$ 
then, since $G(n,q)$ has a $K_r$-factor with high probability, by the celebrated theorem of Johansson, Kahn and Vu~\cite{JKV}, the chromatic number of $G(n,p)$ is concentrated on an interval of size $O(\mu_{r+1})$, where $\mu_s = q^{s \choose 2} {n \choose s}$. In particular, this implies that the concentration of $\chi(G(n,p))$ decreases from $n^{1/2}$ to $(\log n)^3$ as $q$ increases from $n^{-1}$ to $n^{-1} \log n$.

Surya and Warnke also made the following beautiful conjecture (see~\cite[Conjecture~8]{SW}) regarding the concentration of the chromatic number when $q$ is not quite so small. If 
$$n^{-2/(r-1)} (\log n)^{1/{r-1 \choose 2}} \ll q \ll n^{-2/r}$$
for some $r \ge 2$, then $\chi(G(n,p))$ is concentrated on an interval of size $O(\sqrt{\mu_r})$, and is not concentrated on any interval of size $o(\sqrt{\mu_r})$. Their result on the concentration of $\chi(G(n,p))$ in the range $n^{-2} \ll q = O(n^{-1})$ confirmed this conjecture in the case $r = 2$. 




In this paper we will study the Surya--Warnke conjecture in the case $r = 3$, that is, when $n^{-1}\log n \ll 1 -p \ll n^{-2/3}$. The following theorem confirms one half of the conjecture in this range, by showing that $\chi(G(n,p))$ is concentrated on an interval of length $O(\sqrt{\mu_3})$. 

\begin{theorem}\label{main1}
For each $\varepsilon > 0$, there exists a constant $C = C(\varepsilon)$ such that the following holds. If\/ $n^{-1}\log n \ll q = 1 -p \ll n^{-2/3}$, then
\begin{align*}
\Prob\Big( \big| \chi\big( G(n,p) \big) -\E\big[ \chi\big( G(n,p) \big) \big] \big| \geq Cn^{3/2}q^{3/2} \Big) \le \varepsilon
\end{align*}
 for all sufficiently large $n \in \N$.
\end{theorem}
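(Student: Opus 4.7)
Write $q = 1 - p$ and set $H := G(n,q)$. Since $q \ll n^{-2/3}$, we have $\E[\#\{K_4 \subseteq H\}] \le \binom{n}{4}q^6 = o(1)$, so $\omega(H) \le 3$ with high probability. Proper $k$-colourings of $G(n,p)$ are in bijection with partitions of $V(H)$ into $k$ cliques of $H$, each of size $1$, $2$ or $3$. If such a partition uses $a$ triangles and $b$ edges (and $n - 3a - 2b$ singletons), then $k = n - 2a - b$. Setting $f(H) := \max(2a + b)$ over such partitions, we have $\chi(G(n,p)) = n - f(H)$ with high probability.

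\textbf{Structural reduction.} Let $M_3 = M_3(H)$ be the maximum size of a vertex-disjoint triangle packing in $H$. The trivial bound $b \le (n - 3a)/2$ yields $2a + b \le n/2 + a/2 \le n/2 + M_3/2$, so $f(H) \le n/2 + M_3/2$. For the matching lower bound, fix a maximum triangle packing $T^*$ and take a maximum matching in $H - V(T^*)$. Since $nq \gg \log n$, $H$ has a perfect matching with high probability, and since $|V(T^*)| = 3M_3 = O(\mu_3) \ll n$, the remainder $H - V(T^*)$ should admit a matching of size $(n - 3M_3)/2 - O(1)$ with high probability. Granting this, $\chi(G(n,p)) = n/2 - M_3/2 + O(1)$ with high probability.

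\textbf{Variance via Efron--Stein.} The edge indicators $\xi_e$ of $H$ are independent Bernoulli$(q)$, so
\begin{align*}
\mathrm{Var}(M_3) \;\le\; \tfrac{1}{2}\sum_{e}\E\big[(M_3 - M_3^e)^2\big],
\end{align*}
where $M_3^e$ resamples $\xi_e$ independently. Resampling changes $M_3$ by at most $1$ (at most one triangle of a packing can contain $e$), and the change is zero unless \emph{both} $\xi_e$ flips under the resample (probability $2q(1-q)$) \emph{and} the endpoints $u, v$ of $e$ share a common neighbour in $H \setminus \{e\}$ (probability $\le (n-2)q^2$, independent of the flip). Hence $\E[(M_3 - M_3^e)^2] \le 2q(n-2)q^2 = O(nq^3)$, and summing over the $\binom{n}{2}$ edges gives $\mathrm{Var}(M_3) = O(n^3 q^3) = O(\mu_3)$. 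By Chebyshev's inequality, for each $\varepsilon > 0$ there is $C_1 = C_1(\varepsilon)$ with $\Pr[|M_3 - \E M_3| > C_1 \sqrt{\mu_3}] \le \varepsilon/2$.

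\textbf{Conclusion and main obstacle.} Combining, on an event of probability at least $1 - \varepsilon/2 - o(1)$ we have $\chi = n/2 - M_3/2 + O(1)$ together with $|M_3 - \E M_3| \le C_1 \sqrt{\mu_3}$. Routine handling of the tail (using $|\chi| \le n$ and $\sqrt{\mu_3} \to \infty$) gives $|\E\chi - (n/2 - \E M_3/2)| = O(\sqrt{\mu_3})$, which yields the theorem for $C = C(\varepsilon)$ large enough. The \emph{main obstacle} is the structural reduction: because $V(T^*)$ depends on $H$, the random-graph matching threshold does not apply directly to $H - V(T^*)$. One must therefore establish a \emph{robust} near-perfect matching property for $H$ that survives the removal of vertices in any maximum triangle packing---presumably by showing that $H - V(T^*)$ retains minimum degree $\ge 1$ and no Tutte-type obstruction via a careful analysis of how the triangle packing interacts with low-degree vertices of $H$.
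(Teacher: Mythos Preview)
Your overall strategy coincides with the paper's: reduce $\chi(G(n,p))$ to $\lceil (n-M_3)/2\rceil$ via a structural result, then control the fluctuations of $M_3=s(G(n,q))$. On the concentration side your Efron--Stein argument is correct and is a genuine simplification of what the paper does. The paper runs the vertex-exposure martingale and applies Freedman's inequality (Section~\ref{section5}), which forces a rather delicate case analysis of the one-step increments (Lemma~\ref{diff2}) in order to bound the predictable quadratic variation. Your edge-resampling bound $\E[(M_3-M_3^e)^2]\le 2q(1-q)\cdot(n-2)q^2$ gives $\Var(M_3)=O(n^3q^3)$ in one line, and Chebyshev then yields exactly the statement of Theorem~\ref{main1'}. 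You lose the exponential tail that Freedman provides, but for Theorem~\ref{main1} only a second-moment bound is needed, so your route is strictly more economical here.

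The gap, which you correctly flag, is the structural reduction: showing that $H-V(T^*)$ has a near-perfect matching. This is Theorem~\ref{structure}, and it is the heart of the paper (Sections~\ref{section3}--\ref{section4}). The dependence of $S=V(T^*)$ on all of $H$ is exactly the obstacle. The paper resolves it by proving that, with probability $1-n^{-\omega(1)}$, for \emph{every} $T\subset V(H)$ with $|T|\le n/2$ one has $|N_H(T)\cap S|\le \delta|N_H(T)|$ (Lemma~\ref{lem:DT:unlikely}); the crucial technical input is the deletion method of Janson--R\"odl--Ruci\'nski, applied to bound the number of triangles meeting $N_H(T)$ but not $T$. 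Given this uniform control, a random equi-partition $V(H-S)=A\cup B$ together with Hall's theorem (treating small, medium and large $T\subset A$ separately) produces the near-perfect matching. Your suggestion of ``minimum degree $\ge 1$ plus no Tutte obstruction'' is in the right direction but understates what is required: one really must control the interaction between $S$ and $N_H(T)$ uniformly over all $T$, and this is where the work lies.

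One small point on your final step: to get $|\E\chi-(n/2-\E M_3/2)|=O(\sqrt{\mu_3})$ from ``$|\chi|\le n$ on the bad event'' you need the structural event to fail with probability $O(\sqrt{\mu_3}/n)=O(n^{1/2}q^{3/2})$, not merely $o(1)$; since $q\ll n^{-2/3}$ this is $o(n^{-1/2})$. The paper's proof of Theorem~\ref{structure} actually gives failure probability $n^{-\omega(1)}$, which is ample, but you should be aware that a bare ``whp'' would not suffice.
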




We will also provide some further evidence in favour of the Surya--Warnke conjecture by showing that there exists an infinite sequence of values of $n$ such that $\chi(G(n,p))$ is not concentrated on any interval of length $o(\sqrt{\mu_3})$, as long as the function $q = q(n)$ (where $q = 1 - p$) is sufficiently `smooth', which we define as follows. 



\begin{definition}\label{def:smoothfunction}
A function $q \colon \N \to [0,1]$ 
is \emph{smooth} if 
\begin{align*}
q(n+1)=q(n)+o\bigg(\frac{1}{q(n)^2n^3}\bigg)
\end{align*}    
as $n \to \infty$.
\end{definition}

In particular, note that the function $q(n) = n^{-c}$ is smooth for every $c > 2/3$. Inspired by the coupling technique of Heckel~\cite{H2} and Heckel and Riordan~\cite{HR}, we will prove the following non-concentration result for smooth functions. 



\begin{theorem}\label{main2}
If\/ $n^{-1}\log n \ll q = 1 - p \ll n^{-2/3}$ is a smooth function, then for some absolute constants $\varepsilon > 0$ and $c > 0$, and infinitely many values of $n \in \N$, the following holds:
\begin{align*}
\Prob\Big( \chi\big( G(n,p) \big) \in I_n \Big) \le 1 -
 \varepsilon
\end{align*}
for every interval $I_n$ of length $|I_n|\leq cn^{3/2}q^{3/2}$.  
\end{theorem}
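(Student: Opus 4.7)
The plan is to identify the number of triangles in the complement graph $G(n,q)$ as the combinatorial quantity driving the fluctuations of $\chi(G(n,p))$, and to apply the coupling technique of Heckel~\cite{H2} and Heckel-Riordan~\cite{HR} to transfer its anti-concentration to $\chi$. Since $\alpha(G(n,p)) \le 3$ with high probability in our range (by a routine first-/second-moment computation), an optimal coloring decomposes $[n]$ into independent sets of sizes $\{1,2,3\}$: writing $a,b,c$ for their counts, one has $\chi = n - 2a - b$ under $3a + 2b + c = n$, so minimizing $\chi$ amounts to maximizing $2a + b$.

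The first and most delicate step is a structural lemma asserting that, with high probability,
\[
\chi(G(n,p)) \;=\; \tfrac{1}{2}\bigl(n - Y_n\bigr) + O(1),
\]
where $Y_n$ is the maximum number of vertex-disjoint triangles in $G(n,q)$. The upper bound follows by greedily packing $Y_n$ disjoint triangles and then matching the remainder, which covers all but $O(1)$ vertices since $G(n,q)$ has minimum degree $\ge 1$ w.h.p.\ (using $q \ge n^{-1}\log n$). The lower bound---that no alternative partition beats greedy by more than $O(1)$ in the objective $2a+b$---uses the elementary identity that trading one packed triangle for two matching edges decreases $2a + b$ by $\tfrac{1}{2}$ modulo integer corrections, combined with a careful analysis of the residual matching deficiency across all candidate values of $a$.

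The second ingredient is the anti-concentration bound $\sup_k \Pr[Y_n = k] = O(1/\sqrt{\mu_3})$. When $q \ll n^{-7/9}$ this follows cleanly from the Chen-Stein estimate $d_{\mathrm{TV}}(T_n, \mathrm{Poi}(\mu_3)) \to 0$ (where $T_n$ is the triangle count in $G(n,q)$) together with $|Y_n - T_n| = o(\sqrt{\mu_3})$ w.h.p., since the expected number of triangle pairs sharing a vertex is $O(n^5 q^6) = o(\mu_3)$; the Poisson local CLT then delivers the claim. In the remaining range $n^{-7/9} \le q \ll n^{-2/3}$, a direct second-moment calculation on the indicators $\mathbf{1}[\{i,j,k\} \in \text{maximum packing}]$ is required, aiming to show $\mathrm{Var}(Y_n) = (1+o(1))\mu_3$ even when the overdispersion of $T_n$ is not negligible (because, heuristically, the conflicting triangles that inflate $\mathrm{Var}(T_n)$ get absorbed into the packing).

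Combining these steps, for every interval $I$ of length at most $c\sqrt{\mu_3}$,
\[
\Pr\bigl(\chi(G(n,p)) \in I\bigr) \;\le\; \Pr\bigl(Y_n \in I'\bigr) + o(1) \;\le\; O(c) + o(1) \;\le\; 1 - \varepsilon,
\]
for a sufficiently small absolute constant $c$, where $I'$ is an interval of length $O(c\sqrt{\mu_3})$. The smoothness hypothesis on $q$ enters through the Heckel-Riordan coupling bridging $G(n, p(n))$ and $G(n+1, p(n+1))$: the condition $q(n+1) - q(n) = o(1/\mu_3)$ ensures that the edge discrepancy induced by varying $p$ is negligible on the scale $\sqrt{\mu_3}$, allowing one to conclude for \emph{infinitely many} (rather than all) $n$ by a pigeonhole-style selection within each sufficiently long window. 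The \emph{main obstacle} I anticipate is the structural lemma---specifically proving the $+O(1)$ error term (and not merely $+o(\sqrt{\mu_3})$), which demands ruling out non-greedy coloring strategies and a precise control on the matching deficiency in the complement after the triangle packing is removed. The anti-concentration step for $q$ close to $n^{-2/3}$ is a secondary challenge, since the naive Poisson approximation for $T_n$ is no longer accurate once $n^2 q^3$ fails to be small, and the variance of $Y_n$ must then be analysed without going through $T_n$.
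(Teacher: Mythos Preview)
Your structural reduction --- that $\chi(G(n,p)) = \lceil (n-Y_n)/2 \rceil$ with high probability, where $Y_n$ is the maximum triangle-packing size in $G(n,q)$ --- is exactly the paper's Theorem~\ref{structure}, so the first half of your plan is on target (though the paper's proof, via Hall's theorem on a random equipartition of $V(G)\setminus S$ together with the deletion method to control $|N(T)\cap S|$, is rather different from the ``greedy plus matching deficiency'' sketch you give).

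The genuine gap is in your anti-concentration step for $Y_n$ when $n^{-7/9}\le q\ll n^{-2/3}$. You propose a ``direct second-moment calculation on the indicators $\mathbf{1}[\{i,j,k\}\in\text{maximum packing}]$'' to show $\Var(Y_n)=(1+o(1))\mu_3$. This is not a viable route: membership in the \emph{maximum} packing is a global property of the graph, so these indicators have no tractable covariance structure, and the paper itself flags the CLT for $Y_n$ in this range as open. More seriously, even if you had $\Var(Y_n)\sim\mu_3$, this would \emph{not} yield the pointwise bound $\sup_k\Pr[Y_n=k]=O(1/\sqrt{\mu_3})$ that your final step uses --- a variance estimate never rules out concentration on a short interval. (For $q\ll n^{-7/9}$ your Poisson/CLT argument is fine, and indeed gives the conclusion for \emph{all} $n$ without smoothness; this is the paper's Theorem~\ref{thm:main*}.)

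The paper bypasses both difficulties by using the Heckel--Riordan coupling as the \emph{primary} anti-concentration mechanism, not as the auxiliary ``pigeonhole selection'' you describe. The argument is by contradiction and is global over $n$: assuming $Y_n$ is concentrated on $[a_n,b_n]$ with $b_n-a_n\le\eps\alpha(n)$ for \emph{every} large $n$, one couples $G(n,q(n))$ with $G(n',q(n))$ (where $n'=n+\alpha(n)$) by planting $\alpha(n)/3$ random triangles, and then couples $G(n',q(n))$ with $G(n',q(n'))$ by sprinkling --- this second coupling is precisely where smoothness is used. Together these force $a_{n'}-a_n\ge (1/3-O(\eps))(n'-n)$, so iterating gives $a_n$ growing linearly in $n$, contradicting $Y_n\le |\cK_3(G(n,q))|=o(n)$. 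No local anti-concentration of $Y_n$ is ever proved or needed.
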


Moreover, for a smaller range of $p$ we will be able to prove a central limit theorem for $\chi(G(n,p))$, without any assumption on the smoothness of $q(n)$.  



\begin{theorem}\label{chi:CLT}
If\/ $n^{-1}\log n\ll 1-p\ll n^{-7/9}$, then 
\begin{align*}
\frac{\chi\big( G(n,p) \big) - \E\big[ \chi\big( G(n,p) \big) \big]}{\sqrt{\Var\big( \chi\big( G(n,p) \big) \big)}}\stackrel{d}{\longrightarrow}\mathcal{N}(0,1),
\end{align*}
where $\mathcal{N}(0,1)$ is the standard normal distribution. Moreover, 
$$\Var\big( \chi(G(n,p)) \big) = \Theta\big(n^3(1-p)^3\big).$$ 
\end{theorem}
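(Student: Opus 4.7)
The plan is to transfer to the complement graph $\tilde{G}=G(n,q)$, where $q=1-p$, so that $\chi(G(n,p))$ equals the clique-cover number of $\tilde{G}$. Since $\mu_4=\binom{n}{4}q^6\to 0$ when $q\ll n^{-2/3}$, with high probability $\tilde{G}$ is $K_4$-free, and so every clique used in an optimal cover is a triangle, an edge, or a singleton. Writing $T^*$ for the maximum vertex-disjoint triangle packing in $\tilde{G}$ and $M^*$ for a maximum matching in the graph induced on $V\setminus V(T^*)$, we get $\chi(G(n,p))=n-2T^*-M^*$ with high probability, which rearranges to $\chi(G(n,p))=n/2-T^*/2+O(1)$ provided $M^*=(n-3T^*)/2+O(1)$.

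The key structural claim is that $\chi(G(n,p))=\tfrac{n}{2}-\tfrac{N_3}{2}+R$ with high probability, where $N_3=N_3(\tilde{G})$ is the triangle count and $R$ satisfies $|R|=O(X)+O(1)$, with $X$ the number of unordered pairs of triangles in $\tilde{G}$ sharing at least one vertex. For the triangle-packing side, any triangle involved in no ``bad'' pair is automatically vertex-disjoint from all other triangles, yielding $T^*\ge N_3-2X$; a direct first-moment calculation gives $\E[X]=\Theta(n^5q^6)$, dominated by pairs sharing exactly one vertex (pairs sharing an edge contribute only $\Theta(n^4q^5)\ll n^5q^6$ since $q\gg n^{-1}$). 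For the matching side, $|V(T^*)|\le 3N_3=O(\mu_3)=o(n/\log n)$ when $q\ll n^{-2/3}$, so a union bound over removal sets $S$ of this size, combined with the Erd{\H o}s--R\'enyi matching theorem applied to $G(n-|S|,q)$, shows that $\tilde{G}-V(T^*)$ admits a near-perfect matching with high probability, giving $M^*=(n-3T^*)/2+O(1)$. Applying Markov's inequality to $X$ then gives $R=O_{\Prob}(n^5q^6)$.

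Substituting yields $\chi-\E[\chi]=-(N_3-\mu_3)/2+(R-\E[R])$. The threshold $q\ll n^{-7/9}$ is chosen precisely so that $n^5q^6=o(n^{3/2}q^{3/2})=o(\sqrt{\mu_3})$, whence $(R-\E[R])/\sqrt{\mu_3}\to 0$ in probability. Since $q\gg(\log n)/n$ implies $\mu_3\gg(\log n)^3\to\infty$, the triangle count $N_3$ satisfies the classical subgraph-count central limit theorem (via Stein's method, or the method of moments of Ruci\'nski), so $(N_3-\mu_3)/\sqrt{\Var(N_3)}\stackrel{d}{\longrightarrow}\mathcal{N}(0,1)$, and Slutsky's theorem delivers the CLT for $\chi(G(n,p))$. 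For the variance: $\Var(N_3)=\Theta(\mu_3)=\Theta(n^3q^3)$, because covariance contributions from pairs of triangles sharing an edge amount to $\Theta(n^4q^5)=o(n^3q^3)$ when $q\ll n^{-1/2}$ (which holds throughout our range). A standard second-moment estimate $\Var(X)=O(\E[X]^2+\E[X])=O(n^{10}q^{12}+n^5q^6)=o(n^3q^3)$ then yields $\Var(R)=o(n^3q^3)$, so $\Var(\chi)=\Var(N_3)/4+o(n^3q^3)=\Theta(n^3q^3)$.

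The main obstacle is the matching analysis in the residual graph: because $V(T^*)$ depends on the edges of $\tilde{G}$ in a complicated way, the Erd{\H o}s--R\'enyi matching theorem cannot be applied directly to $\tilde{G}-V(T^*)$, and a careful union-bound or conditioning argument (exploiting that $|V(T^*)|=o(n/\log n)$ with high probability), together with tracking of the $O(1)$ parity defect, is required. The secondary technical ingredient -- second-moment control on the bad-pair count $X$ -- is standard but involves enumerating the various overlap patterns of two pairs of triangles.
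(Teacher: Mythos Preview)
Your overall strategy matches the paper's: reduce $\chi(G(n,p))$ to $\lceil (n-s(G))/2\rceil$ via a structural result about the complement $G\sim G(n,q)$, then compare the maximum triangle-packing size $s(G)=T^*$ to the triangle count $N_3$ (losing at most $O(X)$ where $X$ counts intersecting pairs), invoke Ruci\'nski's CLT for $N_3$, and finish with Slutsky. The variance computation and the role of the threshold $q\ll n^{-7/9}$ are also as in the paper.

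However, the step you flag as ``the main obstacle'' is a genuine gap, and the fix you sketch does not work. You propose to show that $\tilde G-V(T^*)$ has a near-perfect matching by a union bound over all candidate removal sets $S$ with $|S|\le 3N_3=O(\mu_3)$, applying the Erd\H os--R\'enyi matching theorem to each $G(n-|S|,q)$. But the failure probability for a near-perfect matching in $G(m,q)$ is only $e^{-\Theta(nq)}$ (governed by isolated vertices), whereas the number of sets $S$ of size $k=\Theta(\mu_3)=\Theta(n^3q^3)$ is
\[
\binom{n}{k}=\exp\big(\Theta(k\log n)\big)=\exp\big(\Theta(n^3q^3\log n)\big).
\]
Since $nq\gg\log n$ forces $n^3q^3\log n\gg (nq)(\log n)^2\gg nq$, the union bound loses by a stretched-exponential factor throughout the range $n^{-1}\log n\ll q\ll n^{-7/9}$. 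No ``careful'' version of this union bound can succeed: the set $V(T^*)$ genuinely depends on the edges of $\tilde G$ in a way that correlates adversarially with the matching structure of the residual graph.

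The paper handles exactly this point with a substantially different argument (its Sections~3--4). Rather than union-bounding over $S$, it proves that with high probability \emph{every} set $T\subset V(G)$ with $|T|\le n/2$ satisfies $|N_G(T)\cap S|\le\delta|N_G(T)|$, using the deletion method of Janson--R\"odl--Ruci\'nski to control, for each small $T$, the number of triangles meeting $N_G(T)$ but not $T$. This yields $|N_{G-S}(T)|\ge(1-\delta)|N_G(T)|$ uniformly, after which a random equipartition of $V(G-S)$ and Hall's theorem produce the near-perfect matching. You should either import this structural lemma or find an alternative argument that avoids the union bound; without it the identity $M^*=(n-3T^*)/2+O(1)$ is unproved and the reduction to $N_3$ collapses.
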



The main step in the proofs of Theorems~\ref{main1},~\ref{main2} and~\ref{chi:CLT} is the following description of the optimal colouring: with high probability, it is formed by the largest triangle-matching in the graph of non-edges of $G(n,p)$, together with a matching missing at most one vertex. In order to state this result precisely, we need a little notation. 



First, we say that a matching $M$ in a graph $G$ is \emph{near-perfect} if at most one vertex of $G$ is not contained in an edge of $M$. 
A triangle-matching is a collection of vertex-disjoint triangles, and we define $S(G)$ to be the set of vertices of the largest triangle-matching\footnote{If there is more than one triangle-matching of maximum size, then we choose one of them according to some arbitrary deterministic rule, so that $S(G)$ is well-defined.} in $G$. 
Given a set of vertices $S \subset V(G)$, we write $G - S$ for the subgraph of $G$ induced by the set $V(G) \setminus S$. Our structure theorem for optimal colourings is as follows. 


\begin{theorem}\label{structure}
Let $G \sim G(n,q)$ for some $n^{-1}\log n\ll q = q(n) \ll n^{-2/3}$, and set $S = S(G)$. With high probability the graph $G - S$ has a near-perfect matching. 
\end{theorem}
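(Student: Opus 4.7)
My plan is to prove Theorem~\ref{structure} in three stages: collect structural properties of $G \sim G(n,q)$ and its triangles; deduce information about $S$; and prove the matching claim via a Tutte--Berge argument.

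For the first two stages, I would use standard concentration to show that with high probability $G$ is $K_4$-free (since $\E[\#K_4] = O(n^4 q^6) = o(1)$ for $q \ll n^{-2/3}$), $\deg_G(v) = (1 \pm o(1))nq$ for every $v$ (by Chernoff and $nq \gg \log n$), the triangle count is $(1+o(1))\mu_3$ (by the second moment), and at most $O(n^5 q^6) = o(\mu_3)$ pairs of triangles share a vertex (crucially using $n^2 q^3 \to 0$). Hence almost all triangles are vertex-disjoint, the maximum triangle-matching has size $(1 - o(1))\mu_3$, and $|S| = \Theta(n^3 q^3) = o(n)$. The key structural observation for the matching step is that $G - S$ is triangle-free by maximality of the triangle-matching: any triangle in $G - S$ could be added to extend the matching.

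To show $G - S$ has a near-perfect matching, I would first lower-bound its minimum degree. Writing $V_\triangle$ for the set of vertices in some triangle of $G$, the decomposition $\deg_{G - S}(v) = \deg_G(v) - |N_G(v) \cap S|$ reduces this to
\[
\E\big[|N_G(v) \cap V_\triangle|\big] \;=\; O\big(nq \cdot n^2 q^3\big) \;=\; o(nq),
\]
which uses $\Pr[u \in V_\triangle \mid uv \in G] = O(n^2 q^3)$ together with $n^2 q^3 = o(1)$. A polynomial-concentration (Kim--Vu-type) argument and a union bound over $v$ then give $\delta(G - S) \geq (1 - o(1))nq$ with high probability. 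With this, I would rule out Tutte--Berge obstructions by showing: (a) $G - S$ has no isolated vertex, which reduces to $\sum_v \Pr[N_G(v) \subseteq V_\triangle] = o(1)$, which in turn holds because $n \cdot (n^2 q^3)^{nq}$ is easily checked to be $o(1)$; and (b) no larger odd component or bad Tutte set $U$ exists, using the strong expansion of $G(n,q)$ in the regime $nq \gg \log n$, analogous to the classical proof that $G(n,p)$ with $np \gg \log n$ has a perfect matching.

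The hardest step will be managing the statistical dependence between $S$ and the edges of $G - S$, since $S$ is a function of the whole random graph. My strategy is to condition on the high-probability structural events from the first stage: under that conditioning, the only extra constraint on each $v \in V \setminus V_\triangle$ is that $N_G(v)$ is an independent set, leaving the bulk of edges between far-apart pairs essentially unconstrained, which is what allows both the minimum-degree concentration for $|N_G(v) \cap S|$ and the expansion inputs to the Tutte--Berge argument to proceed.
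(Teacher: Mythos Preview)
Your high-level plan matches the paper's: establish that $G-S$ inherits enough expansion from $G$, then feed that into a Hall/Tutte--Berge argument. The observation that $G-S$ is triangle-free by maximality is correct, and your minimum-degree bound $\delta(G-S)\ge(1-o(1))nq$ is plausible (though it already needs a genuine concentration argument, since $\E[|N_G(v)\cap V_\triangle|]\asymp n^3q^4$ can be as large as $n^{2/3-o(1)}$ and Markov alone does not beat the union bound over vertices).

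The real gap is in step (b). A minimum-degree bound on $G-S$ is \emph{not} enough to rule out Tutte--Berge obstructions in this regime. Since $nq$ can be as small as $(\log n)^{1+o(1)}$, the crude component-counting estimate ``each component of $(G-S)-U$ has size at least $\delta(G-S)-|U|+1$'' only gives at most $\sim 1/q$ components, which can vastly exceed $|U|+1$. To push the argument through you must show, for every component $C$ of $(G-S)-U$, that $|N_{G-S}(C)|$ is large; equivalently, that $|N_G(T)\cap S|\le \delta\,|N_G(T)|$ for \emph{every} set $T$ with $|T|\le 1/q$, not just singletons. You cannot get this from ``strong expansion of $G(n,q)$'' alone, because for small $T$ one may have $|S|\gg|N_G(T)|$ (e.g.\ at $q=n^{-0.9}$, $|N_G(v)|\approx n^{0.1}$ while $|S|\approx n^{0.3}$), so subtracting $|S|$ is useless. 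Bounding $|N_G(T)\cap V_\triangle|$ for all such $T$ is exactly the paper's Lemma~\ref{lem:DT:unlikely}, and the difficulty is that the union bound runs over $n^{|T|}$ sets, so you need tail probabilities of order $n^{-\omega(|T|)}$; this is why the paper invokes the deletion method of Janson--R\"odl--Ruci\'nski rather than a single Kim--Vu bound. Your last paragraph acknowledges the dependence issue but does not supply a mechanism that delivers expansion in $G-S$ for sets; without that, the Tutte--Berge step does not close.
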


Note that \Cref{structure} determines the optimal
colouring of $G(n,p)$ with high probability, where $p = 1 - q$, since for $q \ll n^{-2/3}$ the expected number of copies of $K_4$ in $G(n,q)$ is $o(1)$. In particular, it follows that, with high probability,
\begin{align}\label{chi}
\chi(G(n,p)) = \bigg\lceil \frac{3n -|S(G(n,q))|}{6} \bigg\rceil.
\end{align}
In order to deduce our three main theorems 
from Theorem~\ref{structure}, it will therefore suffice to control the concentration of the size of the largest triangle-matching in $G(n,q)$.






The rest of this paper is organised as follows. In \Cref{section2} we state our main results about the concentration of $|S(G(n,q))|$, and outline their proofs, and that of \Cref{structure}. 
In \Cref{section3}
we prove a technical result, \Cref{lem:DT:unlikely}, which plays a crucial role in the proof of \Cref{structure}; we then complete the proof of \Cref{structure} in \Cref{section4}. Finally, in Sections~\ref{section*}, Sections~\ref{section5} and~\ref{section6}, we prove our central limit theorem and our bounds on the concentration of $|S(G(n,q))|$. Some discussion of open questions is presented in \Cref{section7}.


\section{Proof overview}\label{section2}

This section has two main aims. First, we will outline the proof of our key structure theorem (\Cref{structure}), which is the most important and novel part of the paper. We will then discuss how we control the concentration of the size of the largest triangle-matching in $G(n,q)$, using martingale techniques (in particular, Freedman's inequality), the classical central limit theorem of Ruci\'nski~\cite{R} for the number of triangles in $G(n,p)$, and the coupling technique introduced by Heckel~\cite{H2} in her recent breakthrough work on the non-concentration of the chromatic number of $G(n,1/2)$. 



\subsection{Finding a near-perfect matching} 



Let $G \sim G(n,q)$ and set $S = S(G)$. To find a near-perfect matching in $G-S$, it's natural to use the celebrated theorem of Hall~\cite{Hall}. Given any set $X$, let us say that $X = A \cup B$ is an \emph{equi-partition} of $X$ if
\begin{align*}
A\cap B = \emptyset \qquad \textup{and} \qquad |A| \leq |B| \leq |A| + 1. 
\end{align*}
Given a graph $H$, we write $N_H(T)$ for the set of vertices of $V(H) \setminus T$ with a neighbour in $T$. 

\begin{lemma}[Hall's theorem]\label{Hall} 
Suppose that $H$ is a bipartite graph whose parts $A$ and $B$ form an equi-partition of $V(H)$. Then there exists a near-perfect matching in $H$ between $A$ and $B$ if and only if $|N_H(T)|\geq |T|$ for every $T\subset A$.
\end{lemma}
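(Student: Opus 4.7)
The plan is to derive \Cref{Hall} as an essentially immediate corollary of the classical Hall marriage theorem, using the equi-partition hypothesis only to translate between ``matching saturating $A$'' and ``near-perfect matching.''

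For the necessity direction, I would argue as follows. Suppose $M$ is a near-perfect matching in $H$ between $A$ and $B$. Then at most one vertex of $V(H)$ is left unmatched by $M$, and since $|A| \le |B|$, the possibly unmatched vertex must lie in $B$; hence $M$ saturates every vertex of $A$. For any $T \subseteq A$, the $M$-partners of the vertices in $T$ are $|T|$ distinct elements of $B$, all belonging to $N_H(T)$, which immediately gives $|N_H(T)| \ge |T|$.

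For the sufficiency direction, assume Hall's condition $|N_H(T)| \ge |T|$ for every $T \subseteq A$, and apply the classical Hall marriage theorem \cite{Hall} to obtain a matching $M$ in $H$ that saturates $A$. Since $M$ saturates $A$, it has exactly $|A|$ edges, so the number of vertices of $H$ left unmatched is $|V(H)| - 2|M| = (|A|+|B|) - 2|A| = |B| - |A| \le 1$, by the equi-partition hypothesis. Therefore $M$ is near-perfect, as required.

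The only substantive ingredient is the classical Hall marriage theorem itself, which I would not reprove: the standard induction on $|A|$, or equivalently an augmenting-path argument, suffices, and this is precisely the result being cited. What remains on top of it is the short bookkeeping above, made possible by the equi-partition condition.
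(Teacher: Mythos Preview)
Your proposal is correct, and the argument is exactly the right way to derive this variant from the classical Hall marriage theorem. The paper itself does not give a proof of \Cref{Hall}; it simply states the lemma and cites~\cite{Hall}, treating it as a known result, so your short derivation is precisely what is needed to justify the ``near-perfect matching'' formulation from the standard ``matching saturating $A$'' version.
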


To apply Hall's theorem to $G-S$, we choose a random equi-partition $V(G-S) = A \cup B$ of the vertex set. It will then suffice to show that, with high probability (according to the randomness of both $G(n,q)$ and the random partition),
\begin{align}\label{eq:N_{G-S}(T)1}
 |N_{G-S}(T)\cap B| \geq |T|  
\end{align} 
holds for every $T \subset A$. We will prove this in two steps. In the first step, we will use the randomness of $G(n,q)$ to show that with high probability
\begin{align}\label{eq:N_{G-S}(T)2}
|N_{G-S}(T)|\geq(1-\delta)|N_G(T)|   
\end{align}
holds for every $T\subset V(G-S)$, for some small constant $\delta > 0$. We will then, in the second step, use the randomness of the partition $A \cup B$ to show that~\eqref{eq:N_{G-S}(T)1} holds with high probability for any graph $G$ that satisfies~\eqref{eq:N_{G-S}(T)2} and some simple pseudorandom properties.

To see why~\eqref{eq:N_{G-S}(T)2} should hold, note that $\E[|S|] \le n^3q^3 \ll n$, and it is therefore reasonable to hope that only a small fraction of $N_G(T)$ is contained in $S$. If $|T| \ge 1/q$ then this is simple to show, since with high probability we have $|N_G(T)| = \Theta(n) \gg |S|$ for all such sets, so our main challenge will be to show that it holds when $|T| \le 1/q$. 

To do so, we will bound the number of vertices of $N_G(T)$ that are contained in some triangle of $G$. This is in turn at most $3$ times the number of triangles of $G$ that intersect $N_G(T)$. We will use the so-called deletion method of Janson, R\"odl and Ruciński~\cite{JR,RR} to bound the probability that the number of such triangles is much larger than its expectation, allowing us to deduce~\eqref{eq:N_{G-S}(T)2} via a union bound over sets $T$. One minor additional complication is that we will need to deal separately with those triangles than intersect $T$; we will do so using some simple pseudorandom properties of $G$. 

For the second step, we first reveal the graph $G \sim G(n,q)$, and assume that it satisfies~\eqref{eq:N_{G-S}(T)2}, and that $N_G(T)$ has roughly the expected size for every set $T \subset V(G)$ with $|T| = O(1/q)$. We then use the randomness of the partition $V(G - S) = A \cup B$ to bound, for each set $T \subset V(G- S)$, the probability that $T \subset A$ and $| N_{G-S}(T) \cap B | < |T|$. To do so, we split into three cases, depending on the size of $T$. 

First, if $T$ is `small' (more precisely, if $|T| = O(1/q)$), then $|N_G(T)| \gg |T| \log n$ (by our assumption on $G$, and because $q \gg n^{-1} \log n$), which implies that $| N_{G-S}(T) \cap B | < |T|$ holds with probability much smaller than $n^{-|T|}$. Next for `medium-sized' sets $T$ (with $|T| \gg 1/q$ and $n/2 - |T| \gg 1/q$), the set of non-neighbours of $T$ is sufficiently small that~\eqref{eq:N_{G-S}(T)1} holds deterministically. Finally, when $T$ is `huge' (that is, when $n/2 - |T| = O(1/q)$), we need to be careful, since we can only afford to miss a small set of vertices of $B$. Fortunately, however, we can deduce the bound we need by considering the sets $A \setminus T$ and $B \setminus N_G(T)$. 

We will prove in~\Cref{section3} that~\eqref{eq:N_{G-S}(T)2} holds with high probability, and then, in~\Cref{section4}, use our random equi-partition to complete the proof of Theorem~\ref{structure}.

\subsection{Concentration of triangle-matchings in $G(n,q)$}\label{subsection2.1} 

For convenience, let us write $s(H)$ for the number of triangles in the largest triangle-matching in graph $H$, that is, 
$$s(H) = \frac{|S(H)|}{3}.$$
As we noted in the introduction, it follows from the structure theorem (\Cref{structure}) that 
\begin{align}\label{chi}
\chi(G(n,p)) = s(G) + \bigg\lceil \frac{n - 3s(G)}{2} \bigg\rceil = \bigg\lceil \frac{n - s(G)}{2} \bigg\rceil
\end{align}
with high probability, where $G \sim G(n,q)$ is the complement of $G(n,p)$. In order to prove Theorems~\ref{main1},~\ref{main2} and~\ref{chi:CLT}, it will therefore suffice to prove corresponding results about the concentration of $s(G(n,q))$. To be precise, we will prove the following three theorems. The first easily implies Theorem~\ref{main1}, using~\eqref{chi}.




\begin{theorem}\label{main1'}
Let $n^{-1}\log n \ll q \ll n^{-2/3}$. For any $\varepsilon > 0$, there exists $c = c(\varepsilon)$ such that
\begin{align*}
\Prob\Big( \big| s(G(n,q)) -\E[s(G(n,q))] \big| \geq cn^{3/2}q^{3/2} \Big) \le \varepsilon
\end{align*}
for all sufficiently large $n \in \N$.
\end{theorem}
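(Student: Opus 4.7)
The plan is to combine an edge-exposure martingale with Freedman's inequality, exploiting the fact that for $q\ll n^{-2/3}$ the function $s(G)$ is sensitive to very few edges. Fix an arbitrary ordering $e_1,\dots,e_N$ of the possible edges of $K_n$, where $N=\binom{n}{2}$, let $\mathcal{F}_i$ be the $\sigma$-algebra generated by the indicators $\id[e_1\in G],\dots,\id[e_i\in G]$, and consider the Doob martingale
\begin{align*}
X_i \;=\; \E\big[\, s(G(n,q)) \mid \mathcal{F}_i \,\big], \qquad 0\le i\le N.
\end{align*}
The Lipschitz bound $|X_i-X_{i-1}|\le 1$ follows from the elementary observation that adding or removing one edge changes $s$ by at most one: any maximum triangle-matching in the modified graph can be turned into a triangle-matching in the original graph by deleting the at most one triangle that uses the changed edge.

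The core of the argument is a bound on the predictable quadratic variation. Writing $a_i=\E[s\mid\mathcal{F}_{i-1},\,e_i\in G]$ and $b_i=\E[s\mid\mathcal{F}_{i-1},\,e_i\notin G]$, a direct computation gives $X_i-X_{i-1}=(\id[e_i\in G]-q)(a_i-b_i)$, whence
\begin{align*}
V \;:=\; \sum_{i=1}^N \E\big[\,(X_i-X_{i-1})^2 \mid \mathcal{F}_{i-1}\,\big] \;=\; q(1-q)\sum_{i=1}^N D_i^2, \qquad D_i:=a_i-b_i.
\end{align*}
Since $s$ is monotone non-decreasing in the edge set, $0\le D_i\le 1$, and moreover $D_i=0$ unless the endpoints $u,v$ of $e_i$ have a common neighbour in $G$: if no such vertex exists, then adding $e_i$ creates no new triangle, so $s$ is unchanged. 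Therefore
\begin{align*}
D_i^2 \;\le\; D_i \;\le\; \Prob\big(\, u,v \text{ share a neighbour in } G \mid \mathcal{F}_{i-1} \,\big),
\end{align*}
and summing and taking expectations gives $\E\big[\sum_i D_i\big] \le \binom{n}{2}(n-2)q^2 = O(n^3 q^2)$. Markov's inequality then supplies, for each $A>0$, an event $\mathcal{E}_A$ of probability at least $1-O(1/A)$ on which $\sum_i D_i\le An^3q^2$, whence $V\le q\sum_i D_i \le An^3q^3$ on $\mathcal{E}_A$.

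Applying Freedman's inequality with Lipschitz constant $K=1$ and quadratic variation bound $V_0=An^3q^3$, and using that $t:=cn^{3/2}q^{3/2}=o(V_0)$ under the assumption $q\gg n^{-1}\log n$ (which forces $n^3q^3\to\infty$), yields
\begin{align*}
\Prob\big(\, |s-\E[s]|\ge cn^{3/2}q^{3/2},\; \mathcal{E}_A \,\big) \;\le\; 2\exp\Big(-\Omega\big(c^2/A\big)\Big)
\end{align*}
for sufficiently large $n$. Choosing first $A=A(\varepsilon)$ so that $O(1/A)\le\varepsilon/2$, and then $c=c(\varepsilon)$ large enough so that the exponential above is at most $\varepsilon/2$, completes the proof.

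The main obstacle is the high-probability bound on the predictable quadratic variation $V$. This is comparatively painless in our range because the sparsity $q\ll n^{-2/3}$ keeps the expected number of triangle-edges small, so a simple Markov estimate on $\sum_i D_i$ suffices; in denser regimes one would need a sharper tail estimate on this sum to carry the argument through.
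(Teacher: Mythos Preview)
Your proof is correct, and it is genuinely different from---and more elementary than---the paper's argument. The paper uses the \emph{vertex}-exposure martingale and spends most of Section~6 (Lemmas~6.5--6.7) on a delicate deterministic bound $|X_i-X_{i-1}|\le 7n^2q^3$ when the newly exposed vertex is not in a triangle, in order to obtain the strong tail $\Prob(V_n\ge 4n^3q^3)=n^{-\omega(1)}$. You instead use the \emph{edge}-exposure martingale, exploit monotonicity of $s$ to get $0\le D_i\le 1$, and observe that $D_i$ vanishes unless the endpoints of $e_i$ share a neighbour; this gives $\E[V]=O(n^3q^3)$ in one line and lets a Markov bound on $V$ replace the entire technical core of the paper's Section~6.

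The trade-off is that your control on $V$ is only $\Prob(V\ge An^3q^3)=O(1/A)$ rather than $n^{-\omega(1)}$, so your method yields exactly the constant-probability concentration stated in Theorem~\ref{main1'}, whereas the paper's Lemma~6.3 would in principle support a stronger statement (concentration on an interval of length $O\big((n^3q^3\log n)^{1/2}\big)$ with probability $1-n^{-\Omega(1)}$). Since the theorem only asks for the former, your route is a clean simplification.
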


Our second theorem provides a bound on the non-concentration $s(G(n,q))$ for smooth functions $q = q(n)$ (see Definition~\ref{def:smoothfunction}) and infinitely many values of $n \in \N$, and implies Theorem~\ref{main2}, using~\eqref{chi}. The proof is strongly inspired by the method of Heckel~\cite{H2}


\begin{theorem}\label{main2'}
Let $n^{-1}\log n \ll q \ll n^{-2/3}$ be a smooth function. There exists $\varepsilon > 0$ such that, for infinitely many $n \in \N$, we have
\begin{align*}
\Prob\big( s(G(n,q) \in I_n \big) < 1 - \varepsilon
\end{align*}
for every interval $I_n$ of length $|I_n| \leq \varepsilon^2 n^{3/2} q^{3/2}$.  
\end{theorem}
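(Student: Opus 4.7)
\medskip
\noindent\textit{Proof plan.}
We follow the coupling technique of Heckel~\cite{H2}. Given $n$, set $k := \lceil c_1 n^{-1/2} q(n)^{-3/2} \rceil$ for a small constant $c_1 > 0$, so that $n^{1/2} \lesssim k \lesssim n/(\log n)^{3/2}$ throughout our range, and the expected number of triangles in $G(n+k, q(n))$ incident to one of the $k$ additional vertices is $k\binom{n}{2}q^3 \asymp \sqrt{\mu_3}$. By the smoothness hypothesis on $q$, $|q(n+k)-q(n)| \ll q(n)$, which allows us to couple $G \sim G(n, q(n))$ and $G' \sim G(n+k, q(n+k))$ so that $V(G) \subseteq V(G')$ and $G$ coincides with $G'[V(G)]$ up to an independent Bernoulli correction on edges inside $V(G)$ affecting only $o(\sqrt{\mu_3})$ triangles in expectation.

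Under this coupling, consider the shift $\Delta := s(G') - s(G) \ge 0$. Using Theorem~\ref{structure} together with the near vertex-disjointness of triangles in $G(n,q)$ for $q \ll n^{-2/3}$, we show that $\Delta$ is within $o_p(\sqrt{\mu_3})$ of the number of triangles in $G'$ using at least one new vertex. A direct second moment calculation yields $\E[\Delta] = (1+o(1))\,(c_1/2)\sqrt{\mu_3}$ and $\Var(\Delta) = O(\sqrt{\mu_3})$, so $\Delta$ is tightly concentrated around its mean on the scale $\mu_3^{1/4} \ll \sqrt{\mu_3}$.

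Suppose for contradiction that for every sufficiently large $n$ there exists an interval $I_n$ of length $\le L_n := \varepsilon^2 n^{3/2} q(n)^{3/2}$ with $\Pr\bigl(s(G(n, q(n))) \in I_n\bigr) \ge 1 - \varepsilon$. Combining the concentrations of $s(G)$, $s(G')$, and $\Delta$ under the coupling then forces the centres $M_n$ of $I_n$ to obey $M_{n+k} - M_n = \E[\Delta] + O(L_n)$ for all large $n$. Tracking this equation as $n$ increments by one, and pitting the per-step mean shift $\binom{n}{2}q^3/2 \ll 1$ (which holds since $q \ll n^{-2/3}$) against the integer-valued nature of $s$, while leveraging the smoothness of $q(n)$ to control the cumulative error, one obtains an incompatibility at infinitely many values of $n$ and hence the required contradiction.

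The delicate part is this last step: showing that the near-integer-valued discreteness of $s$ cannot be reconciled with the continuous drift $\E[\Delta]$ predicted by the coupling. The argument is in the spirit of Heckel's non-concentration argument~\cite{H2} for $\chi(G(n,1/2))$, with additional care needed to handle the dependence of $\Delta$ on the triangle overlap structure in $G(n,q)$.
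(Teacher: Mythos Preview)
Your coupling framework is broadly in the right spirit, but the proposal has a genuine gap in the contradiction step, and the choice of step size makes the argument unrecoverable as stated.

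First, the final step is not a valid contradiction mechanism. You write that the incompatibility comes from ``pitting the per-step mean shift $\binom{n}{2}q^3/2 \ll 1$ \ldots\ against the integer-valued nature of $s$''. But the centres $M_n$ of the intervals $I_n$ are real numbers, and the intervals have length $\varepsilon^2 n^{3/2}q^{3/2}\gg 1$, so they contain many integers; no discreteness obstruction arises. This is \emph{not} how the Heckel--Riordan argument obtains its contradiction: there the contradiction is a \emph{growth-rate} one, showing that concentration forces the centres $M_n$ to drift upward so fast that eventually $M_n$ exceeds any possible value of the quantity in question.

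Second, and more seriously, your step size $k\asymp n^{-1/2}q^{-3/2}$ cannot produce such a growth-rate contradiction. Under your coupling, adding $k$ vertices increases $s$ by the number of ``new'' triangles, which has mean $\Theta(\sqrt{\mu_3})$; this is a per-vertex gain of order $\sqrt{\mu_3}/k=\Theta(n^2q^3)\ll 1$. Iterating from $n$ to $2n$ then forces $M_{2n}-M_n=\Theta(n^3q^3)=\Theta(\mu_3)$, which is entirely consistent with $s(G(2n,q))\le x(G(2n,q))=\Theta(\mu_3)$; no contradiction follows. The paper avoids this by using a different coupling: rather than relying on random triangles among the new vertices, it goes from $n$ to $n+3$ by \emph{planting} a uniformly random triangle, and shows (via a total-variation bound) that the resulting graph is close in distribution to $G(n+3,q)$. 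This guarantees $s(n+3,q)\ge s(n,q)+1$ with high probability, i.e.\ a per-vertex gain of $1/3$. Iterating $\alpha(n)=\Theta(\sqrt{\mu_3})$ times and then using smoothness to pass from $q(n)$ to $q(n')$, one obtains $a_{n'}-a_n\ge (1/3-O(\varepsilon))(n'-n)$. Iterated up to $2n_0$, this forces $a_{n_K}\gtrsim n_0$, contradicting $s(G(n_K,q(n_K)))\le x(G(n_K,q(n_K)))=o(n_0)$ since $q\ll n^{-2/3}$. To repair your argument you would need to replace the random-triangle count by this planted-triangle coupling and then run the iteration to a global growth contradiction rather than a local discreteness one.
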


The final property of $s(G(n,q))$ we will need is given by the following theorem. It is a relatively straightforward consequence of the  central limit theorem for the number of triangles in $G(n,q)$, which was proved by Ruci\'nski~\cite{R} in the 1980s. Recall that we write $\mathcal{N}(0,1)$ for the standard normal distribution with mean $0$ and variance $1$. 



\begin{theorem}\label{thm:main*}
Let $n^{-1} \ll q \ll n^{-7/9}$, then
\begin{align*}
\frac{s\big( G(n,q) \big) - \E\big[ s\big( G(n,q) \big) \big]}{\sqrt{\Var\big( s\big( G(n,q) \big) \big)}}\stackrel{d}{\longrightarrow}\mathcal{N}(0,1).
\end{align*}
Moreover, $\Var\big( s(G(n,q)) \big) = \Theta(n^3q^3)$.
\end{theorem}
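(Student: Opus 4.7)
\textbf{Proof plan for Theorem \ref{thm:main*}.}
Write $X = T_3(G)$ for the number of triangles in $G\sim G(n,q)$; the strategy is to compare $s = s(G)$ to $X$ and then apply the central limit theorem for $X$ due to Ruci\'nski. Ruci\'nski's theorem yields $(X-\E X)/\sqrt{\Var(X)}\to \mathcal{N}(0,1)$, and a standard second-moment computation gives $\Var(X)=(1+o(1))\binom{n}{3}q^3\sim n^3q^3/6$ throughout our regime, since the $\Theta(n^4q^5)$ contribution from edge-sharing triangles is of lower order once $q\ll n^{-1/2}$.

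The key combinatorial observation is that any triangle of $G$ sharing no vertex with any other triangle of $G$ may be added to a maximum triangle-matching. Hence $0\le X-s\le D$, where $D$ is the number of \emph{bad} triangles, that is, triangles sharing at least one vertex with some other triangle. Two overlapping triangles form either a copy of $K_4-e$ (sharing an edge) or a bowtie (sharing exactly one vertex); writing $P$ for the total number of such unordered pairs, we have $D\le 2P$ and
\[
\E[P] \;=\; O\big(n^4q^5 + n^5q^6\big).
\]
The threshold $q\ll n^{-7/9}$ is chosen precisely so that $n^{7/2}q^{9/2}=o(1)$, i.e.\ $\E[P]=o(n^{3/2}q^{3/2})=o(\sqrt{\Var(X)})$. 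Markov's inequality then gives $(X-s)/\sqrt{\Var(X)}\to 0$ in probability; together with $\E[X-s]\le 2\E[P]=o(\sqrt{\Var(X)})$, Ruci\'nski's CLT and Slutsky's theorem yield $(s-\E s)/\sqrt{\Var(X)}\to \mathcal{N}(0,1)$.

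It remains to show $\Var(s)=\Theta(n^3q^3)$ so that $\sqrt{\Var(X)}$ can be replaced by $\sqrt{\Var(s)}$ in the normalisation. Setting $R=X-s$ and expanding $\Var(s)=\Var(X)-2\,\mathrm{Cov}(X,R)+\Var(R)$, Cauchy--Schwarz reduces the task to $\Var(R)=o(n^3q^3)$; since $R\le 2P$ it suffices to prove $\E[P^2]=o(n^3q^3)$. Writing $\E[P^2]=\Var(P)+\E[P]^2$, the bound $\E[P]^2=o(n^3q^3)$ is immediate from the first-moment estimate, as $(n^5q^6)^2/(n^3q^3)=n^7q^9=o(1)$ exactly at the threshold $q\ll n^{-7/9}$. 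The variance $\Var(P)$ is bounded term by term by the classical second-moment formula for subgraph counts, each overlap of two templates from $\{K_4-e,\text{bowtie}\}$ contributing an expression of the form $n^aq^b$. The main obstacle is to enumerate all such overlaps and verify that each contribution is $o(n^3q^3)$ in our range; this is not conceptually difficult --- the dominant contribution always comes from a single shared edge --- but requires a little bookkeeping, and is precisely where the exponent $7/9$ is seen to be tight.
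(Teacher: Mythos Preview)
Your proposal is correct and follows essentially the same approach as the paper: both compare $s(G)$ to the triangle count $x(G)$, bound the difference by the number $y(G)$ of triangles that meet another triangle (your $D$), invoke Ruci\'nski's CLT for $x(G)$, and then establish $\Var(s)=(1+o(1))\Var(x)$ via Cauchy--Schwarz together with a second-moment case analysis of overlapping-triangle configurations showing $\E[y(G)^2]=o(n^3q^3)$. The only cosmetic difference is that you route the second-moment bound through the pair count $P$ (with $D\le 2P$) rather than through $\E[D^2]$ directly, but the underlying bookkeeping --- and the appearance of the $n^{10}q^{12}$ term that makes $q\ll n^{-7/9}$ tight --- is identical.
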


We remark that a central limit theorem is known for the size of the largest matching in a sparse random graph, see~\cite{Pittel,Kreacic,GKSS}.

We will deduce Theorem~\ref{thm:main*} from Ruci\'nski's theorem in Section \ref{section*}. The reason we can prove the theorem only for $q \ll n^{-7/9}$ is that in this range the expected number of pairs of intersecting triangles in $G(n,q)$ is smaller than the typical deviation in the total number of triangles. It seems plausible that a more complicated version of this argument could allow one to extend the result to a slightly larger range of $q$; however, proving a central limit theorem for the entire range $q \ll n^{-2/3}$ appears to require additional ideas.

In Section \ref{section5}, we will prove Theorem~\ref{main1'} using a martingale method. 
More precisely, we will control the deviations of $s(G(n,q))$ by considering the vertex exposure martingale $X_i$, and apply Freedman's inequality to bound the tail. The main challenge will be to bound the predictable quadratic variation of the martingale; to do so, we will use a simple pseudorandomness condition to bound the probability that a triangle is created in a given step, and then show (via a somewhat technical calculation) that 
$$|X_i - X_{i-1}| = O(n^2q^3)$$ 
(deterministically) whenever a triangle is not created (see \Cref{diff2}). 


Finally, in \Cref{section6}, we will prove Theorem~\ref{main2'} using the method of Heckel~\cite{H2} and Heckel and Riordan~\cite{HR}. The main complication we face in this section is that the method of~\cite{H2,HR} only works when $q$ is constant, whereas our function $q = q(n)$ decreases quite rapidly. To deal with this issue, we construct two couplings, between $G(n,q(n))$ and $G(n',q(n))$, for some $n'$ with $q(n') \sim q(n)$, 
using the method of~\cite{H2,HR}, and then between $G(n',q(n))$ and $G(n',q(n'))$, using sprinkling and the smoothness of $q(n)$. 

Using these two couplings, we will show that if $s(n,q(n))$ 
is concentrated on an interval of size $o\big( n^{3/2}q(n)^{3/2} \big)$ for every sufficiently large $n$, then the typical value of $s(n,q(n))$ must grow linearly with $n$. This will give us our desired contradiction, since the size of the largest triangle-matching in a graph is bounded from above by the total number of triangles. 


\section{Bounding the number of neighbours in $S(G(n,q))$}\label{section3}

In this section we will show that with high probability, only a small fraction of the set $N(T)$ is contained in $S(G(n,q))$ for every $T \subset V(G(n,q))$ with $|T| \le n/2$. For convenience, let's fix for the next two sections a function $n^{-1}\log n\ll q\ll n^{-2/3}$, a sufficiently large integer $n \in \N$, and write $G \sim G(n,q)$ and $S = S(G)$. We also fix a small constant $\delta > 0$.

Recall that $N(x)$ denotes the set of neighbours of $x$ in $G$ and that $N(T) = \bigcup_{x\in T} N(x)\setminus T$. Let us write $\cT$ for the collection of sets $T\subset V(G)$ with $|T| \le n/2$, and for each $T \in \cT$, define
\begin{equation}\label{D(T)}
\cD(T) = \big\{ |N(T)\cap S| > \delta|N(T)| \big\}
\end{equation}
to be the event that more than a $\delta$-proportion of the elements of $N(T)$ are contained in $S$. We will prove that with high probability none of the events $\cD(T)$ hold.

\begin{lemma}\label{lem:DT:unlikely}
\begin{align*}
\Prob\bigg(\bigcup_{T \in \cT} \cD(T) \bigg)=n^{-\omega(1)}.
\end{align*}
\end{lemma}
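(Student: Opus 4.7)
The central observation is that every vertex of $S$ is contained in some triangle of $G$, so
\[
|N(T) \cap S| \;\le\; 3 Z_T, \qquad \text{where } Z_T \;:=\; \#\bigl\{\text{triangles } \Delta \text{ in } G : V(\Delta) \cap N(T) \neq \emptyset\bigr\}.
\]
It therefore suffices to show that, with probability $1 - n^{-\omega(1)}$, we have $Z_T \le (\delta/3)|N(T)|$ for every $T \in \cT$.

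The first step is to establish, by standard concentration arguments, the following pseudorandom properties of $G \sim G(n,q)$, each holding with probability $1 - n^{-\omega(1)}$: (P1) $|N(T)| \ge \tfrac12 \min\bigl(|T|nq,\, n\bigr)$ for every $T \subseteq V(G)$ (Chernoff plus a union bound, using $nq \gg \log n$); (P2) every vertex of $G$ lies in at most $\log n$ triangles (a high-moment bound, using $n^2 q^3 \to 0$); and (P3) $G$ contains at most $n^3 q^3$ triangles in total (upper-tail concentration for subgraph counts). Note that (P3) gives $|S| \le 3n^3 q^3$.

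Next, I would split $\cT$ at the threshold $T^\ast := K n^2 q^2$ for a large constant $K = K(\delta)$. When $|T| \ge T^\ast$, (P1) and (P3) together yield the trivial bound $|N(T) \cap S| \le |S| \le 3n^3 q^3 < \delta |N(T)|$, so $\cD(T)$ does not occur. For $|T| < T^\ast$, decompose $Z_T = Z_T^{\mathrm{int}} + Z_T^{\mathrm{ext}}$, where $Z_T^{\mathrm{int}}$ counts triangles of $G$ meeting $T$ and $Z_T^{\mathrm{ext}}$ counts triangles entirely within $V \setminus T$ that meet $N(T)$. Property (P2) gives $Z_T^{\mathrm{int}} \le |T| \log n$, which by (P1) and $nq \gg \log n$ is at most $(\delta/6)|N(T)|$, leaving $Z_T^{\mathrm{ext}}$ as the main quantity to control.

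To bound $Z_T^{\mathrm{ext}}$, I would first expose all edges incident to $T$ so that $N(T) =: U$ becomes fixed, and then apply the Janson--R\"odl--Ruci\'nski deletion method to the triangle count in $G[V\setminus T] \sim G(n-|T|,q)$ restricted to triangles meeting $U$. The conditional expectation is $O(|U| n^2 q^3) = o(|U|)$, so the target $Z_T^{\mathrm{ext}} \le (\delta/6)|U|$ corresponds to a multiplicative overshoot of $(n^2 q^3)^{-1} = \omega(1)$ above the mean; the deletion method should yield a conditional tail bound of the form $\exp\bigl(-\omega(|T|\log n)\bigr)$, strong enough to absorb the $\binom{n}{|T|} \le n^{|T|}$ choices in a union bound summed over $|T| \le T^\ast$. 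The main technical challenge is delivering this tail bound with the right quality: we expect to exploit the slack $nq \gg \log n$ to gain super-polynomial factors in the exponent, and the very sparse regime $n^2 q^3 \to 0$ makes the usual obstruction to triangle-count concentration --- pairs of overlapping triangles --- negligible, so the deletion step should consist of removing a genuinely small number of ``bad'' edges. Setting up the deletion argument so that it is uniform over the whole range $1 \le |T| \le T^\ast$ and respects the conditioning on $U$ is where the bulk of the technical work lies.
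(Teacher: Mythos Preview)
Your approach is essentially the paper's: split by $|T|$, dispose of large $T$ via $|S| \le 3n^3q^3 \ll |N(T)|$, and for small $T$ separate triangles meeting $T$ (controlled by a per-vertex triangle bound) from triangles avoiding $T$ (handled by first exposing the edges at $T$ to fix $U = N(T)$, then applying the Janson--R\"odl--Ruci\'nski deletion method to triangles in $G[V\setminus T]$ meeting $U$). The paper uses the threshold $1/q$ rather than your $Kn^2q^2$, but either choice works.

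One concrete point to tighten: the pseudorandom input you actually need for the deletion step is a \emph{codegree} bound $|N(x)\cap N(y)| \le \omega_0$ for some slowly growing $\omega_0 \to \infty$ (valid with probability $1 - n^{-\omega(1)}$ since $nq^2 \to 0$), not your (P2). In the deletion lemma the relevant quantity is $Y^*$, the maximum number of triangles in the family through any single \emph{edge}, and (P2) only yields $Y^* \le \log n$. To get a tail of $\exp\bigl(-\omega(|T|\log n)\bigr)$ you must take $r \gg |T|\log n$, and then the requirement $Y^* < t/(2r) = O\bigl(nq/(\omega\log n)\bigr)$ with $Y^* = \log n$ forces $nq \gg (\log n)^2$, which is strictly stronger than the hypothesis $nq \gg \log n$. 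The paper instead uses $Y^* \le \omega_0$ together with the compatible choices $r = \Theta(\omega_0|T|\log n)$ and $nq \ge \omega_0^3\log n$, so that $t/(2r) \ge \Theta(\omega_0^2) > \omega_0 \ge Y^*$, and the argument goes through across the full range.
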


Proving Lemma~\ref{lem:DT:unlikely} is the main aim of this section. We will bound $\Prob(\cD(T))$ in different ways, depending on the size of $T$. It will be easy to deal with large sets $T$, since when $|T| \geq 1/q$ we will see that with high probability $|N(T)| = \Theta(n)$, whereas $|S| = o(n)$, which implies that $\cD(T)$ fails to hold. 

For small sets $T$, with $|T| \leq 1/q$, things are more complicated. Therefore, instead of bounding the size of $N(T) \cap S$ directly, we will bound a larger (but simpler) object: the number of vertices of $N(T)$ that are contained in \emph{any} triangle of $G$. More precisely, observe that 
$$N(T)\cap S \subset \Lambda_1(T) \cup \Lambda_2(T),$$
where $\Lambda_1(T)$ is the set of vertices of $N(T)$ that are contained in some triangle of $G$ that intersects the set $T$, and $\Lambda_2(T)$ is the set of vertices of $N(T)$ that are contained in some triangle that doesn't intersect $T$. Since every vertex of $S$ is contained in some triangle, by definition, it will suffice to show that
$$|\Lambda_1(T)| + |\Lambda_2(T)| \le \delta |N(T)|.$$
Bounding the size of $\Lambda_1(T)$ will be quite easy: we will just need to estimate the number of triangles that intersect $T$. Bounding $|\Lambda_2(T)|$, on the other hand, will be significantly more difficult; in particular, we will need to use the deletion method of Janson, R\"odl and Ruciński~\cite{JR,RR}. Putting the pieces together, and using the union bound, we will obtain the claimed bound on the probability that $\cD(T)$ holds for some small set $T$. 

Throughout the proof of Lemma~\ref{lem:DT:unlikely}, we will find it convenient to be able to assume that $G$ has certain pseudorandom properties. We will therefore begin by defining the properties we need. Given a set $X \subset V(G)$, let us write $e(X) = e(G[X])$ for the number of edges in the subgraph of $G$ induced by $X$. Also, recalling that $n^{-1}\log n\ll q \ll n^{-2/3}$, let $\omega_0$ be any function tending to infinity sufficiently slowly so that 
\begin{align}\label{omega0}
nq \geq \omega^3_0 \log n \qquad \textup{and} \qquad n^2q^3 \le e^{-\omega_0}.
\end{align}  
We will show that the following event holds with high probability.

\begin{definition}
Let $\cR$ be the event that $G$ has the following four properties:
\begin{itemize}
\item[$(i)$] Let $X_3$ be the number of triangles in $G$, then
$$X_3\leq n^3q^3.$$
\item[$(ii)$] For all $x,y\in V(G)$, 
$$e(N(x))\leq\omega_0\log n \qquad \textup{and} \qquad |N(x)\cap N(y)| \leq \omega_0.$$
\item[$(iii)$] For each $T\subset V(G)$ with $|T|\leq1/q$, 
$$\frac{nq|T|}{2} \leq |N(T)| \leq \frac{3nq|T|}{2}.$$
\item[$(iv)$] For every $T\subset V(G)$ with $1 \le q|T| \le \omega_0$,
$$|N(T)| \geq \big( 1 - e^{-q|T|/2} \big)n.$$
\end{itemize}
\end{definition}
\noindent Note the number of triangles in $S$ is at most $X_3$ by definition, (i) implies $G$ also holds
$$|S|\leq 3X_3\leq 3n^3q^3.$$
It is straightforward to show, using Chernoff's inequality with our selection ~\eqref{omega0}, that the event $\cR$ holds with high probability. We record this fact as the following lemma; for completeness, we provide a proof in the Appendix.

\begin{lemma}\label{regular}
\begin{align*}
\Prob(\cR)=1-n^{-\omega(1)}.
\end{align*}
\end{lemma}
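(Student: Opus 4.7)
The plan is to verify each of the four properties (i)--(iv) defining $\cR$ individually with failure probability $n^{-\omega(1)}$, and then apply the union bound. The common tool is a Chernoff-type concentration inequality combined with a union bound over the relevant family of subsets $T \subset V(G)$. Throughout, we are free to choose the auxiliary function $\omega_0$ of~\eqref{omega0} to tend to infinity as slowly as we wish, and in the argument for (iv) we will exploit this freedom to make $\omega_0$ grow very slowly---slow enough that the Chernoff bound beats the union-bound cost---which is always possible in the range $n^{-1}\log n \ll q \ll n^{-2/3}$.

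For (i), the claim $X_3 \le n^3 q^3$ is a constant-factor upper-tail bound, since $\E X_3 = \binom{n}{3}q^3$. In our regime, $\Var(X_3) = \Theta(n^3 q^3)$, because the contribution $\Theta(n^4 q^5)$ from pairs of triangles sharing an edge is negligible relative to the diagonal term whenever $nq^2 \ll 1$; a standard sparse upper-tail bound---Janson's inequality, the deletion method of~\cite{JR,RR}, or Kim--Vu polynomial concentration---then delivers the required $n^{-\omega(1)}$ bound. For (ii), I would treat the codegree and the triangle count through $x$ separately. The codegree $|N(x) \cap N(y)| \sim \mathrm{Bin}(n-2, q^2)$ has mean $nq^2 \ll n^{-1/3}$, so the Chernoff upper tail gives $\Pr[|N(x) \cap N(y)| \ge \omega_0] \le (enq^2/\omega_0)^{\omega_0} = n^{-\omega(1)}$, which easily survives the union bound over the $\binom{n}{2}$ pairs. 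For $e(N(x))$, which equals the number of triangles through $x$, I would first condition on the high-probability event $|N(x)| \le 2nq$ (which holds with probability $1 - \exp(-\Omega(nq)) = 1 - n^{-\omega(1)}$ by Chernoff applied to $\mathrm{Bin}(n-1, q)$); then $e(N(x))$ is stochastically dominated by a Binomial of mean at most $2n^2 q^3 \le 2 e^{-\omega_0}$, and the Chernoff upper tail gives $\Pr[e(N(x)) \ge \omega_0 \log n] = n^{-\omega(1)}$. A union bound over $x$ completes~(ii).

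For (iii) and (iv), the key observation is that for distinct $v, v' \in V(G) \setminus T$ the events $\{v \in N(T)\}$ and $\{v' \in N(T)\}$ depend on disjoint edge-stars, so $|N(T)|$ is a sum of $n - |T|$ independent $\mathrm{Bernoulli}(1 - (1-q)^{|T|})$ indicators. For (iii), when $q|T| \le 1$ the success probability lies in $[q|T|/2,\, q|T|]$, so $\E|N(T)|$ is within a constant factor of $nq|T|$, and Chernoff yields $\Pr[|N(T)| \notin [nq|T|/2,\, 3nq|T|/2]] \le \exp(-\Omega(nq|T|))$, which beats the $\binom{n}{|T|} \le n^{|T|}$ count in the union bound since $nq \ge \omega_0^3 \log n$. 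For (iv), I would instead work with $Y = |V \setminus (N(T) \cup T)| \sim \mathrm{Bin}(n-|T|, (1-q)^{|T|})$ of mean at most $n e^{-q|T|}$, and note that the claim $|N(T)| \ge (1-e^{-q|T|/2})n$ is equivalent to $Y \le n e^{-q|T|/2} - |T|$, a deviation by a multiplicative factor of roughly $e^{q|T|/2}$ above $\E Y$. The sharper form of the Chernoff upper tail then gives
$$\Pr\bigl[Y \ge n e^{-q|T|/2} - |T|\bigr] \le \exp\bigl(-c\, nq|T|\, e^{-q|T|/2}\bigr)$$
for some absolute $c > 0$; to beat the union bound over $T$ of size up to $\omega_0/q$, one needs $c nq\, e^{-\omega_0/2} \gg \log n$, which is arranged by taking $\omega_0$ to grow slowly enough.

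The main obstacle is the calibration of $\omega_0$ in (iv): it must tend to infinity for the downstream arguments (especially the codegree bound in (ii) and~(iv) itself) to be useful, yet be slow enough that the Chernoff bound beats the union-bound cost in (iv). Fortunately, the range $n^{-1}\log n \ll q \ll n^{-2/3}$ leaves enough slack for a suitable $\omega_0 \to \infty$ to exist, and the final union bound over (i)--(iv) yields $\Pr(\cR^c) = n^{-\omega(1)}$, as required.
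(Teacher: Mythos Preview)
Your proposal is correct and matches the paper's own proof in the Appendix almost step for step: an upper-tail triangle bound for~(i), conditioning on $|N(x)|\le 2nq$ before applying Chernoff for~(ii), and the binomial representation $|N(T)|\sim\mathrm{Bin}\bigl(n-|T|,\,1-(1-q)^{|T|}\bigr)$ followed by Chernoff for~(iii) and~(iv). The paper cites the Kim--Vu / Janson--Oleszkiewicz--Ruci\'nski theorem for the constant-factor upper tail in~(i) (ordinary Janson is a lower-tail tool, so drop that option from your list), and is in fact \emph{less} explicit than you are about the $\omega_0$ calibration needed in~(iv), which it leaves entirely to the reader.
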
 
\smallskip

We are now ready to begin our detailed analysis of the event $\cD(T)$ for sets $T\subset V(G)$ of different sizes. We begin with the simplest case, when $T$ is `large', that is, when $|T|\geq 1/q$. In this case, the event $\cR$ implies that the set $N(T)$ is much larger than $S$.

\medskip
\pagebreak

\begin{observation}\label{ob1}
If the event $\cR$ holds, then
\begin{equation}\label{eq:obs1}
|N(T)\cap S|\leq\delta|N(T)|
\end{equation}
for each set $T\in \cT$ with $|T|\geq 1/q$. 
\end{observation}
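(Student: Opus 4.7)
The plan is to combine the upper bound on $|S|$ from the pseudorandomness event $\cR$ with a matching lower bound on $|N(T)|$, and check that the ratio is forced to be smaller than $\delta$ by the quantitative assumptions in \eqref{omega0}.

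First, by property (i) of $\cR$ together with the fact that every vertex of $S$ lies in some triangle of $G$ (so $|S| \leq 3 X_3$, as noted right after the definition of $\cR$), we get $|S| \leq 3 n^3 q^3$. Using \eqref{omega0}, this is $3 n \cdot n^2 q^3 \leq 3 n \, e^{-\omega_0} = o(n)$, which is the only bound on $|S|$ we will need.

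Next, I would show that $|N(T)| \geq n/3$ for every $T \in \cT$ with $|T| \geq 1/q$. There are two sub-ranges to consider. If $1/q \leq |T| \leq \omega_0/q$, then property (iv) of $\cR$ applies directly and gives $|N(T)| \geq \bigl(1 - e^{-q|T|/2}\bigr) n \geq (1 - e^{-1/2}) n$. If instead $|T| > \omega_0/q$, property (iv) does not apply on the nose, so I would reduce to it by fixing an arbitrary subset $T' \subset T$ of size $\lceil \omega_0/q \rceil$ and applying (iv) to $T'$. Since $N(T) \supseteq N(T') \setminus T$ (every vertex in $N(T') \setminus T$ is a neighbour of some vertex of $T' \subset T$ and lies outside $T$), I obtain
\begin{align*}
|N(T)| \;\geq\; |N(T')| - |T| \;\geq\; \bigl(1 - e^{-\omega_0/2}\bigr) n - \tfrac{n}{2} \;\geq\; \tfrac{n}{3}
\end{align*}
for all sufficiently large $n$, using $|T| \leq n/2$ and $\omega_0 \to \infty$.

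Putting the two bounds together, $|N(T) \cap S| \leq |S| \leq 3 n^3 q^3$, which, by \eqref{omega0}, is at most $3 n \, e^{-\omega_0}$. Since $e^{-\omega_0} = o(1)$, this is bounded above by $\delta \cdot (n/3) \leq \delta |N(T)|$ once $n$ is large enough, yielding \eqref{eq:obs1}. The only mildly delicate step is the case $|T| > \omega_0/q$, where property (iv) is not directly available; but the monotonicity trick of passing to a subset $T'$ of size $\lceil \omega_0/q \rceil$ handles it in one line, so I do not expect any serious obstacle.
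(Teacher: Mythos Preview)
Your proof is correct and follows essentially the same approach as the paper: bound $|S|$ above by $3n^3q^3 = o(n)$ using property~(i), bound $|N(T)|$ below by $\Omega(n)$ using property~(iv), and conclude. In fact you are more careful than the paper here: the paper's proof simply asserts that property~(iv) gives $|N(T)|=\Omega(n)$ whenever $q|T|\ge 1$, while you correctly note that (iv) only directly applies for $q|T|\le\omega_0$ and explicitly supply the subset trick (passing to $T'\subset T$ with $|T'|\approx\omega_0/q$ and using $N(T)\supseteq N(T')\setminus T$) to cover the remaining range---the same device the paper itself uses later in Lemmas~\ref{p2} and~\ref{p3}. One cosmetic point: to ensure $q|T'|\le\omega_0$ exactly you may want $\lfloor\omega_0/q\rfloor$ rather than $\lceil\omega_0/q\rceil$, but this changes nothing of substance.
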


\begin{proof}
Since $q|T| \geq 1$ and $|T| \le n/2$, property (\romannumeral4) of $\cR$ implies that $|N(T)| = \Omega(n)$. 
On the other hand, since $q \ll n^{-2/3}$, property (\romannumeral1) of $\cR$ implies that $|S| \leq 3n^3q^3 = o(n)$. It follows that
\begin{align*}
|N(T)\cap S| \leq |S| \ll |N(T)|,
\end{align*}
and therefore~\eqref{eq:obs1} holds, since we chose $n$ sufficiently large.
\end{proof}

When $T \subset V(G)$ is `small', that is, when $|T|\leq 1/q$, the neighbourhood $N(T)$ may no longer be significantly larger than the set $S$, and we will therefore need to work much harder. The first step is to define two sets $\Lambda_1(T)$ and $\Lambda_2(T)$ whose union contains $N(T) \cap S$, and whose sizes we will be able to control. To do so, let us write 
\begin{align*}
\cK_3(G) = \big\{ (x,y,z) : x,y,z\in V(G),\ xy,xz,yz\in E(G) \big\},
\end{align*}
for the collection of triangles of $G$. Now, define
\begin{align*}
\Lambda_1(T) = \big\{ x \in N(T) : \exists \ y\in T, z \in V(G)\ \text{ s.t. }\ (x,y,z)\in \cK_3(G) \big\},
\end{align*}
to be the set of vertices of $N(T)$ that are contained in some triangle of $G$ that intersects the set $T$. Once again, it is straightforward to show that the event $\cR$ implies (deterministically) that the set $\Lambda_1(T)$ is not too large. %


\begin{observation}\label{ob2}
If the event $\cR$ holds, then 
\begin{align*}
|\Lambda_1(T)|<\frac{\delta|N(T)|}{2}
\end{align*}
for each set $T\subset V(G)$ with $|T|\leq 1/q$, 
\end{observation}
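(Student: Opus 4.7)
The plan is to upper bound $|\Lambda_1(T)|$ by counting, for each $y\in T$, the number of vertices of $N(y)$ that can appear as the first coordinate of a triangle containing $y$, and then to compare the result with the lower bound on $|N(T)|$ supplied by property (iii) of $\cR$.

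First, I would unpack the definition: a vertex $x$ lies in $\Lambda_1(T)$ only if there exist $y\in T$ and $z\in V(G)$ with $(x,y,z)\in\cK_3(G)$. Fixing such a $y$, this says that $x\in N(y)$ and that $xz$ is an edge of $G[N(y)]$; each edge of $G[N(y)]$ provides at most two such candidates for $x$, so
\begin{align*}
|\Lambda_1(T)| \;\le\; \sum_{y\in T} 2\, e(N(y)).
\end{align*}
Property (ii) of $\cR$ bounds $e(N(y))\le\omega_0\log n$ uniformly in $y$, and hence $|\Lambda_1(T)|\le 2|T|\,\omega_0\log n$.

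Next, I would invoke property (iii) of $\cR$: since $|T|\le 1/q$, we have $|N(T)|\ge nq|T|/2$ (the case $T=\emptyset$ is trivial, and in any application only nonempty $T$ is relevant). The target inequality $|\Lambda_1(T)|<\delta|N(T)|/2$ thus reduces to
\begin{align*}
2|T|\,\omega_0\log n \;<\; \frac{\delta\, nq|T|}{4}, \qquad\text{i.e.,}\qquad nq \;>\; \frac{8\omega_0\log n}{\delta}.
\end{align*}
This follows immediately from~\eqref{omega0}, which forces $nq\ge\omega_0^3\log n$; for all sufficiently large $n$ the slowly growing function $\omega_0$ satisfies $\omega_0^2\ge 8/\delta$, which gives the bound.

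I do not foresee a real obstacle. The only point requiring a moment's thought is the combinatorial observation that every vertex of $\Lambda_1(T)$ is an endpoint of an edge inside some $G[N(y)]$ with $y\in T$; once that is in hand, the conclusion is a direct consequence of the pseudorandom control packaged into $\cR$ together with the choice of $\omega_0$ in~\eqref{omega0}.
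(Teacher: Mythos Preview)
Your proof is correct and follows essentially the same approach as the paper: both arguments bound $|\Lambda_1(T)|$ via $e(N(y))$ for $y\in T$ using property~(ii) of $\cR$, then compare with the lower bound $|N(T)|\ge nq|T|/2$ from property~(iii) and invoke $nq\ge\omega_0^3\log n$. The only cosmetic difference is that the paper counts triangles through $y$ and multiplies by $3$, giving $|\Lambda_1(T)|\le 3\omega_0|T|\log n$, whereas your edge-endpoint count gives the slightly sharper constant $2$; this is immaterial to the conclusion.
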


\begin{proof}
Note first that, by~\eqref{omega0} and property (\romannumeral3) of $\cR$, we have  
\begin{align*}
|N(T)|\geq\frac{nq|T|}{2} \geq \frac{\omega_0^3|T|\log n}{2}.
\end{align*}
Since $\omega_0 \gg 1$, it will therefore suffice to show that
\begin{equation}\label{eq:lambda1:bound}
|\Lambda_1(T)| \leq 3\omega_0|T|\log n.
\end{equation}
To prove~\eqref{eq:lambda1:bound}, note first that each vertex $y\in T$ is contained in $e(N(y)) \le \omega_0\log n$ triangles of $G$, where the inequality follows from property (\romannumeral2) of $\cR$. Summing over vertices in $T$, and noting that each triangle contains at most three vertices of $N(T)$, we obtain~\eqref{eq:lambda1:bound}.
\end{proof}


We now arrive at the most challenging part of the proof of Lemma~\ref{lem:DT:unlikely}, bounding the size of the set  
\begin{align*}
\Lambda_2(T) = \big\{ x \in N(T) : \exists \  y,z \notin T \ \text{ s.t. }\ (x,y,z)\in \cK_3(G)\big\}
\end{align*}
of vertices of $N(T)$ that are contained in some triangle of $G$ which doesn't intersect $T$. We will do so using the deletion method of Janson, R\"odl and Ruciński~\cite{JR,RR}, so let us first recall the key lemma of that method. Let $k\in\mathbb{N}$, let $\cS$ be a family of subgraphs of $K_n$, each of which has exactly $k$ edges, and define two random variables as follows:
\begin{equation}\label{def:X}
X = \big|\big\{W\in\cS : W \subset G \big\} \big|,
\end{equation}
the number of members of $\cS$ that are contained in $G$, and
\begin{equation}\label{def:Y}
Y^* = \max_{e \in E(K_n)} \big| \big\{ W \in \cS : e \in E(W), \ W \subset G \big\} \big|,
\end{equation}
the maximum over edges $e \in E(K_n)$ of the number of members of $\cS$ contained in $G$ and containing $e$. 
The following lemma can be found in~\cite[Theorem~6A]{JR}. 


\begin{lemma}[The deletion method]\label{deletion}
For each $r,t>0$, let $\cE(r,t)$ be the event that for every graph $F$ with $r$ edges, the graph with edge set $E(G)\setminus E(F)$ contains
at least $\E[X]+t/2$ members of the family $\cS$. Then 
\begin{align*}
\big\{ X \geq \E[X] + t \big\} \subset \cE(r,t) \cup \big\{ Y^* > t/2r \big\}
\end{align*}
and
\begin{equation}\label{eq:Ert:deletion}
\Prob\big( \cE(r,t) \big) \leq \exp\bigg(-\frac{rt}{k\big( 2\E[X]+t \big)}\bigg),
\end{equation}
where $X$ and $Y^*$ are the random variables defined in~\eqref{def:X} and~\eqref{def:Y}.
\end{lemma}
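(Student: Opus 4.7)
This is a brief contrapositive argument. Suppose simultaneously that $X \geq \E[X] + t$ and $Y^* \leq t/(2r)$, and fix any graph $F$ with exactly $r$ edges. A union bound over the edges of $F$ yields
\begin{equation*}
\big|\{W \in \cS : W \subset G,\ E(W) \cap E(F) \neq \emptyset\}\big| \;\leq\; \sum_{e \in E(F)} \big|\{W \in \cS : e \in E(W),\ W \subset G\}\big| \;\leq\; r Y^* \;\leq\; t/2,
\end{equation*}
so the number of $W \in \cS$ with $E(W) \subset E(G) \setminus E(F)$ is at least $X - t/2 \geq \E[X] + t/2$. Since $F$ was arbitrary, $\cE(r,t)$ holds, establishing the stated inclusion.

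\textbf{Part 2 (Probability bound).} For the bound on $\Prob(\cE(r,t))$, my plan is a coupling / two-round-exposure argument. Pick a parameter $q^- < q$, and couple $G \sim G(n,q)$ with $G^- \sim G(n,q^-)$ via independent uniform variables on each pair so that $E(G^-) \subset E(G)$; set $F := E(G) \setminus E(G^-)$. On the event $\cE(r,t) \cap \{|F| \leq r\}$ we may pad $F$ to exactly $r$ edges by appending edges outside $E(G)$ (which does not alter $G \setminus F = G^-$), and the definition of $\cE(r,t)$ then forces $X(G^-) \geq \E[X] + t/2$. Consequently
\begin{equation*}
\Prob\big(\cE(r,t)\big) \;\leq\; \Prob\big(X(G^-) \geq \E[X]+t/2\big) + \Prob(|F|>r),
\end{equation*}
and the second term is handled by Chernoff applied to $|F| \sim \mathrm{Bin}\big(\binom{n}{2},\, q-q^-\big)$ provided $q^-$ is chosen with $\E[|F|] < r$, while the first is attacked via $\E[X(G^-)] = (q^-/q)^k \E[X]$.

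\textbf{Main obstacle.} A single Markov estimate on $X(G^-)$ produces only the polynomial factor $(q^-/q)^k \E[X]/(\E[X]+t/2)$, which is too weak to reach the stated exponential rate $\exp\!\big(-rt/(k(2\E[X]+t))\big)$. To sharpen this I would iterate the deletion coupling: realise $G$ as the superposition of many independent thin layers of edges and apply the one-step estimate layerwise, choosing $\log(q/q^-) \sim t/(k(2\E[X]+t))$ per layer so that the decays accumulate across the $\approx r$ layers needed to absorb the deletion budget. Matching the exact constants in the exponent---with the $k$ in the denominator tracking the edge cost of each member of $\cS$---while keeping the per-layer Chernoff failure probabilities negligible throughout the iteration is the delicate technical step of the proof.
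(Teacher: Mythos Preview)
Your Part 1 is correct and is the standard argument.

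For Part 2, first note that the paper does not actually prove this lemma; it is quoted from \cite{JR}. More importantly, your coupling/sprinkling plan has a genuine gap. The iteration you sketch does not produce multiplicative decay: if you peel off layers $G=G_0\supset G_1\supset\cdots\supset G_m$ and the total number of deleted edges stays below $r$, the only consequence of $\cE(r,t)$ you can use is the \emph{single} inequality $X(G_m)\ge\E[X]+t/2$, to which one Markov step applies. That is exactly your one-step bound with $q^-=q_m$, so nothing is gained by slicing into layers. The resulting ratio $(q^-/q)^k\E[X]/(\E[X]+t/2)$ is governed by $\binom{n}{2}(q-q^-)\approx r$, not by $r/k$ and $t/(2\E[X]+t)$ separately, and in general cannot match the exponent $rt/\big(k(2\E[X]+t)\big)$.

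The missing idea is combinatorial rather than probabilistic. On $\cE(r,t)$ one can \emph{greedily} extract $m=\lfloor r/k\rfloor+1$ pairwise edge-disjoint members of $\cS$ inside $G$: at step $i\le m$ only $(i-1)k\le r$ edges have been removed, so by $\cE(r,t)$ at least $\E[X]+t/2$ members remain to choose from. Hence, if $Z_m$ denotes the number of ordered $m$-tuples of pairwise edge-disjoint members of $\cS$ contained in $G$, then $Z_m\ge(\E[X]+t/2)^m$ on $\cE(r,t)$. On the other hand, edge-disjoint members occur independently, so $\E[Z_m]\le(\E[X])^m$. Markov's inequality now gives
\[
\Prob\big(\cE(r,t)\big)\;\le\;\frac{\E[Z_m]}{(\E[X]+t/2)^m}\;\le\;\bigg(\frac{\E[X]}{\E[X]+t/2}\bigg)^{m}\;\le\;\exp\!\bigg(-\frac{mt}{2\E[X]+t}\bigg)\;\le\;\exp\!\bigg(-\frac{rt}{k(2\E[X]+t)}\bigg),
\]
since $m\ge r/k$. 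The exponential rate thus comes from an $m$th ``disjoint moment'', not from tilting the edge density.
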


We will use the deletion method to prove the following key lemma. 

\begin{lemma}\label{ob3}
For each $T\subset V(G)$ with $|T|\leq1/q$, 
\begin{align}\label{eq:Lambda2}
\Prob\left(\left\{|\Lambda_2(T)| \ge \frac{\delta|N(T)|}{2}\right\}\cap\cR\right)\leq n^{-\omega_0|T|}.    
\end{align}
\end{lemma}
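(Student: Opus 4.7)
The plan is to decompose $\Lambda_2(T)$ into an intersection of two \emph{independent} random sets, one of which will be tightly controlled by the pseudorandom event $\cR$, and to finish with a Chernoff calculation. (The proof overview advertises the deletion method of Janson--R\"odl--Ruci\'nski, which one could apply to the count of $4$-edge configurations \emph{``a triangle in $V(G)\setminus T$ together with a single edge to $T$''}, but the Chernoff route sketched here is cleaner, as it sidesteps the somewhat weak bound $e(N(x))\le \omega_0 \log n$ from property~(\romannumeral2), which would otherwise enter the bound on $Y^*$ in the small-$|T|$ regime.)

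First, I would observe that $\Lambda_2(T)$ admits a useful factorisation. Any $x\in\Lambda_2(T)$ satisfies two separate conditions: $x\in N(T)$, and $x$ lies in some triangle of $G[V(G)\setminus T]$ (since the two other vertices of the witnessing triangle lie in $V(G)\setminus T$ by definition, and $x\in N(T)\subseteq V(G)\setminus T$). Setting
\[
A := N(T), \qquad B := \big\{v\in V(G)\setminus T : v \text{ lies in some triangle of } G[V(G)\setminus T]\big\},
\]
we have $\Lambda_2(T)=A\cap B$. The crucial feature is that $A$ depends only on the edges of $G$ between $T$ and $V(G)\setminus T$, whereas $B$ depends only on the edges inside $V(G)\setminus T$, so $A$ and $B$ are \emph{independent}.

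Next, on the event $\cR$ property~(\romannumeral1) gives $|B|\le 3X_3\le 3n^3 q^3$ (since every vertex of $B$ lies in a triangle of $G$), while property~(\romannumeral3) gives $|N(T)|\ge nq|T|/2$. Consequently the event in~\eqref{eq:Lambda2} is contained in $\cE := \{|A\cap B|\ge \delta nq|T|/4\}\cap \{|B|\le 3n^3 q^3\}$, the second factor of which is measurable with respect to $G[V(G)\setminus T]$. The next step is to condition on $G[V(G)\setminus T]$: $B$ becomes fixed, and the indicators $\{\id(v\in N(T))\}_{v\in B}$ are independent $\mathrm{Bernoulli}(p)$ random variables with $p=1-(1-q)^{|T|}\le q|T|$, since they depend on pairwise disjoint edge sets. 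Thus $|A\cap B|\mid G[V(G)\setminus T]$ is stochastically dominated by $\mathrm{Bin}(|B|,p)$, with mean $\mu\le 3 n^3 q^4 |T|$.

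Finally, with $t := \delta nq|T|/4$, the ratio $\mu/t\le 12 n^2 q^3/\delta \le e^{-\omega_0/2}$ for $\omega_0$ sufficiently large, by~\eqref{omega0}. The Chernoff upper tail then gives
\[
\Pr\big(|A\cap B|\ge t \,\big|\, B\big) \le \Big(\frac{e\mu}{t}\Big)^{t}\le \exp(-\omega_0 t/4),
\]
and averaging over $B$ on the event $\{|B|\le 3n^3 q^3\}$ yields $\Pr(\cE)\le \exp(-c\,\omega_0\, nq|T|)$, which is at most $n^{-\omega_0|T|}$ since $nq\gg\log n$. The main delicacy is the first step: one must replace $\cR$ by the weaker, $G[V(G)\setminus T]$-measurable event $\{|B|\le 3n^3q^3\}$ before invoking the independence of $A$ and $B$; once that is done, the rest is a routine Chernoff estimate.
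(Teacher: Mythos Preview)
Your proposal is correct and takes a genuinely different route from the paper.

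Both arguments rest on the same basic independence: the edges between $T$ and $V(G)\setminus T$ (which determine $N(T)$) are independent of the edges inside $V(G)\setminus T$ (which determine the triangles of $G[V(G)\setminus T]$). The paper exploits this by fixing a candidate value $A$ for $N(T)$, applying the Janson--R\"odl--Ruci\'nski deletion method to the triangle count $|Z(T,A)|$ (using property~(ii) of $\cR$, the codegree bound $|N(x)\cap N(y)|\le\omega_0$, to control the $Y^*$ term), and then summing over admissible $A$ weighted by $\Pr(N(T)=A)$. You invert the conditioning: reveal $G[V(G)\setminus T]$ first, so that the triangle-vertex set $B$ is fixed, and observe that $|N(T)\cap B|$ is then exactly $\mathrm{Bin}(|B|,1-(1-q)^{|T|})$, to which Chernoff applies directly.

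Your version is a strict simplification: it replaces the deletion method by an elementary tail bound, eliminates the summation over $A$, and uses only properties~(i) and~(iii) of $\cR$ rather than~(i),~(ii),~(iii). The one point that needs care, which you flag, is that $\cR$ itself is not $G[V(G)\setminus T]$-measurable, so one must first pass to the weaker containing event $\{|B|\le 3n^3q^3\}\cap\{|A\cap B|\ge \delta nq|T|/4\}$ before conditioning; after that the argument is routine. The paper's route has no compensating advantage for this lemma.
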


Since the proof of Lemma~\ref{ob3} is somewhat complicated, let us first give a brief outline of the proof strategy. Define  
\begin{align*}
\hat{\cK}_3(T) = \big\{(x,y,z)\in \cK_3(G) : x\in N(T),\ y,z\notin T\big\}, 
\end{align*}
the collection of triangles of $G$ which intersect $N(T)$ but not $T$, and observe that
\begin{equation}\label{eq:L2:vs:K3}
|\Lambda_2(T)| \le 3 \cdot |\hat{\cK}_3(T)|
\end{equation}
It will therefore sufffice to bound the number of triangles in $\hat{\cK}_3(T)$. 

To do so, we will use the deletion method to bound the size of the set 
\begin{equation}\label{def:ZTA}
Z(T,A) = \big\{ (x,y,z)\in \cK_3(G) : x \in A,\ y,z \notin T \big\}   
\end{equation}
for each set $A \subset V(G) \setminus T$. Note that if $N(T) = A$, then
\begin{equation}\label{eq:ZTA:vs:K3}
|Z(T,A)| = |\hat{\cK_3}(T)|.
\end{equation}

\pagebreak

\noindent Observe that the events 
$$N(T) = A \qquad \text{and} \qquad |Z(T,A)| \ge a$$
are independent (for any $a \ge 0$), since the first depends only on edges incident to $T$, and the second depends only on edges not incident to $T$. It will therefore suffice to show that  
\begin{equation}\label{eq:ZTA:need}
\Pr\Big( \big\{ |Z(T,A)| = \Omega(|A|) \big\} \cap \cR \Big) \le n^{-\omega_0|T|}
\end{equation}
for every $A \subset V(G) \setminus T$. 

To prove~\eqref{eq:ZTA:need}, we apply the deletion method 
to the random variable $X = |Z(T,A)|$, which corresponds to the family $\cS$ consisting of triangles in $K_n$ that intersect $A$ but not $T$. Note that since $q\ll n^{-2/3}$, we have
\begin{align}\label{E[|Z(T,A)|]}
\E[X] \le q^3 |A| n^2 \ll |A|,
\end{align}
so it will suffice to bound the probability that $X \ge \E[X] + t$ for some $t = \Omega(|A|)$. 

By Lemma~\ref{deletion}, to do so we need to choose $r \in \N$ such that the right-hand side of~\eqref{eq:Ert:deletion} is small, and to bound the probability of the event $\big\{ Y^* > t/2r \big\}$. 
Since $t \ge \E[X]$ and $k = 3$, the first condition is satisfied with $r \approx \omega_0|T|\log n$. To bound the probability that $Y^*$ is larger than $t/2r$, we will use the properties guaranteed by the event $\cR$. 



We are now ready to state the main step in the proof of Lemma~\ref{ob3}.

\begin{lemma}\label{lem:ZTA:and:R}
Fix\/ $T\subset V(G)$ with $|T|\leq 1/q$ and $A\subset V(G)\setminus T$ with 
\begin{align}\label{eq:Abounds}
\frac{nq|T|}{2}\leq|A|\leq\frac{3nq|T|}{2}.    
\end{align}
Then for every constant $\varepsilon>0$ we have
$$\Prob\Big(\big\{|Z(T,A)| \ge \varepsilon |A| \big\}\cap\cR\Big) \le n^{-\omega_0|T|}.$$
\end{lemma}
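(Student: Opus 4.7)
The plan is to apply the deletion method (\Cref{deletion}) to $\tilde X$, the number of unordered triangles of $G$ whose vertex set meets $A$ and is disjoint from $T$, taking $\cS$ to be the family of all such triangles in $K_n$ (each contributing $k=3$ edges). Since every unordered triangle contributes at most $6$ ordered triples to $Z(T,A)$, we have $|Z(T,A)|\le 6\tilde X$, so it suffices to bound the probability that $\tilde X \ge \varepsilon|A|/6$. A direct count gives
$$\E[\tilde X] \;\le\; |A|\binom{n}{2}q^3 \;\le\; |A|\,n^2 q^3 \;\le\; |A|\,e^{-\omega_0}$$
by the second estimate in~\eqref{omega0}, so $\E[\tilde X] \le \varepsilon|A|/12$ for $n$ large, and setting $t := \varepsilon|A|/6 - \E[\tilde X] \ge \varepsilon|A|/12$ reduces the task to bounding $\Pr\bigl(\{\tilde X \ge \E[\tilde X]+t\}\cap \cR\bigr)$.

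Next I would control the codegree statistic $Y^*$ on the event $\cR$: for any edge $e=uv$ of $K_n$, the number of triangles of $G$ containing $e$ equals $|N(u)\cap N(v)|\le \omega_0$ by property~(ii) of $\cR$, so $Y^*\le \omega_0$. Choosing $r:=\lfloor t/(4\omega_0)\rfloor$ then guarantees $t/(2r)\ge 2\omega_0 > Y^*$ on $\cR$, and so the containment in \Cref{deletion} yields
$$\{\tilde X \ge \E[\tilde X]+t\}\cap \cR \;\subset\; \cE(r,t).$$

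Finally, since $\E[\tilde X]\le t$ we have $2\E[\tilde X]+t \le 3t$, and~\eqref{eq:Ert:deletion} gives
$$\Pr(\cE(r,t)) \;\le\; \exp\!\bigg(-\frac{rt}{3(2\E[\tilde X]+t)}\bigg) \;\le\; \exp(-r/9) \;\le\; \exp\!\bigg(-\frac{\varepsilon nq|T|}{C\omega_0}\bigg)$$
for some absolute constant $C>0$, using $|A|\ge nq|T|/2$ from~\eqref{eq:Abounds} in the last step. Substituting the bound $nq\ge \omega_0^3\log n$ from~\eqref{omega0} makes this at most $n^{-\varepsilon\omega_0^2|T|/C}$, which is $\le n^{-\omega_0|T|}$ for $n$ large since $\omega_0\to\infty$. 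I expect the only delicate point to be the balancing of $r$: it must be small enough that $t/(2r)$ comfortably exceeds the codegree bound $\omega_0$ from $\cR$, yet large enough that $\exp(-r/9)$ beats the target bound $n^{-\omega_0|T|}$, and the slack $nq\gg\log n$ from~\eqref{omega0} is precisely what makes both demands satisfiable simultaneously.
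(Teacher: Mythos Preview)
Your proposal is correct and follows essentially the same approach as the paper: apply the deletion method to the family of triangles meeting $A$ and avoiding $T$, use property~(ii) of $\cR$ to bound $Y^*\le\omega_0$, and then balance the parameter $r$ between the two constraints. The only cosmetic differences are that the paper works directly with the ordered count $X=|Z(T,A)|$ and fixes $r=4\omega_0|T|\log n$ first (then checks $t/2r>\omega_0$), whereas you pass to unordered triangles and choose $r=\lfloor t/(4\omega_0)\rfloor$ first (then check the exponential is small enough); both orderings work because of the slack $nq\ge\omega_0^3\log n$ that you correctly identify.
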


\begin{proof}
We apply the deletion method, with $k = 3$ and 
\begin{align*}
\cS= \big\{ (x,y,z) : x\in A,\ y,z\in V(G) \setminus T \big\},
\end{align*}
as described above. Note that $X = |Z(T,A)|$ and $\E[X] \ll |A|$, by~\eqref{E[|Z(T,A)|]}. Set
\begin{equation}\label{def:t:and:r}
t = \frac{\varepsilon nq|T|}{4} \qquad \text{and} \qquad r = 4\omega_0|T|\log n.
\end{equation}
By Lemma~\ref{deletion} we have
\begin{align}\label{ER0}
\Prob\Big(\big\{ X \geq \E[X] + t \big\} \cap \cR \Big) \leq \Prob\big( \cE(r,t) \big) + \Prob\Big( \big\{Y^*\geq t/2r \big\}\cap\cR \Big) 
\end{align}
and 
\begin{align}\label{eq:Ert:app}
\Prob\big( \cE(r,t) \big) \leq \exp\left(-\frac{rt}{3(2\E[X]+t)}\right) \le e^{-r/4} = n^{-\omega_0|T|},
\end{align}
since $\E[X] \ll t = \Theta(|A|)$. 

To prove the lemma, it will therefore suffice to prove the following claim. 



\medskip
\pagebreak

\begin{claim}\label{claim:Ystar}
If the event $\cR$ holds, then 
$$Y^* \le \omega_0 < t/2r.$$
\end{claim}

\begin{proof}[Proof of claim]
The second inequality follows from~\eqref{omega0} and the definition~\eqref{def:t:and:r} of $t$ and~$r$. Indeed, we have 
$$\frac{t}{2r} = \frac{\varepsilon n q}{32\omega_0 \log n} \ge \frac{\varepsilon\omega_0^2}{32} > \omega_0,$$
since $nq \geq \omega^3_0 \log n$. For the first inequality, recall that, by property (\romannumeral2) of $\cR$, the size of common neighbourhood of any pair of vertices in $G$ is at most $\omega_0$. Since $Y^*$ is at most the maximum over pairs of vertices $x$ and $y$ of the number of triangles in $G$ containing $x$ and $y$, it follows that $Y^* \le \omega_0$, as claimed. 
\end{proof}

By~\eqref{ER0},~\eqref{eq:Ert:app} and Claim~\ref{claim:Ystar}, and recalling that $\E[X] \ll |A|$ and that $t \le \varepsilon |A|/2$, it follows that
$$\Prob\Big(\big\{X \ge \varepsilon |A| \big\}\cap\cR\Big) \le n^{-\omega_0|T|},$$
as required.
\end{proof}

We can now easily deduce Lemma~\ref{ob3}

\begin{proof}[Proof of \Cref{ob3}]
Set $\varepsilon = \delta /6$, and suppose that $\cR$ holds and $|\Lambda_2(T)| \ge 3\varepsilon |N(T)|$. Observe that if $N(T) = A$, then 
$$|Z(T,A)| \ge \frac{|\Lambda_2(T)|}{3} \ge \varepsilon |A|,$$
by~\eqref{eq:L2:vs:K3} and~\eqref{eq:ZTA:vs:K3}, and recall from~\eqref{eq:ZTA:need} that the events 
$$N(T) = A \qquad \text{and} \qquad |Z(T,A)| \ge \varepsilon |A|$$
are independent for any fixed set $A \subset V(G) \setminus T$. Moreover, if $N(T) = A$ then~\eqref{eq:Abounds} holds, 
by property (\romannumeral3) of $\cR$. It follows that 
$$\Prob\Big( \big\{|\Lambda_2(T)| \ge 3\varepsilon|N(T)| \big\} \cap \cR \Big) \leq \sum_{A \in \cI(T)}\Prob\Big(\big\{|Z(T,A)|\geq \varepsilon|A| \big\}\cap \cR \Big) \cdot \Prob\big( N(T) = A \big),$$
where $\cI(T)$ is the family of sets $A \subset V(G) \setminus T$ satisfying~\eqref{eq:Abounds}. Now, since

$$\Prob\Big(\big\{|Z(T,A)| \ge \varepsilon |A| \big\}\cap\cR\Big) \le n^{-\omega_0|T|}$$
for every $A \in \cI(T)$, by Lemma~\ref{lem:ZTA:and:R}, the lemma follows.
\end{proof}

We're now finally ready to prove Lemma~\ref{lem:DT:unlikely}.

\begin{proof}[Proof of Lemma~\ref{lem:DT:unlikely}]
Recall that $\cT$ is the collection of sets $T\subset V(G)$ with $|T| \le n/2$, and that $\cD(T)$ denotes the event that $|N(T)\cap S| > \delta|N(T)|$. Note that, by the union bound, 
\begin{align}\label{D_0}
\Prob\bigg(\bigcup\limits_{T \in \cT}\cD(T)\bigg) \leq \, \Prob\bigg( \bigcup\limits_{|T| \le 1/q} \cD(T)\cap \cR \bigg) + \, \Prob\bigg( \bigcup\limits_{1/q < |T|\le n/2}\cD(T) \cap \cR \bigg) + \, \Prob(\cR^c).
\end{align}
Recall that 
\begin{align}\label{kkkk1}
\Prob(\cR^c) = n^{-\omega(1)},   
\end{align}
by~\Cref{regular}, and that 
\begin{align}\label{kkkk2}
\Prob\big( \cD(T)\cap \cR \big) = 0    
\end{align}
for every set $T$ with $1/q < |T|\le n/2$,  by~\Cref{ob1}. Finally, since
$$N(T)\cap S \subset \Lambda_1(T) \cup \Lambda_2(T),$$
it follows from~\Cref{ob2} and~\Cref{ob3} that 
\begin{align*}
\Prob\big( \cD(T)\cap \cR \big) \leq n^{-\omega_0|T|}
\end{align*}
for every set $T$ with $|T| \le 1/q$. Taking a union bound over such sets, it follows that
\begin{equation}\label{kkkk3}
\Prob\bigg( \bigcup\limits_{|T| \le 1/q} \cD(T)\cap \cR \bigg) \leq \,  
\sum_{\ell \leq 1/q}\binom{n}{\ell}\cdot n^{-\omega_0\ell} \,\leq\, \sum_{\ell\leq 1/q} (n\cdot n^{-\omega_0})^{\ell} = n^{-\omega(1)}.
\end{equation}
Combining~\eqref{D_0}, with~\eqref{kkkk1},~\eqref{kkkk2} and~\eqref{kkkk3}, we deduce that
\begin{align*}
\Prob\bigg(\bigcup_{T \in \cT} \cD(T) \bigg) = n^{-\omega(1)},
\end{align*}
as required.
\end{proof}

\section{Finding a near-perfect matching in $G-S$}\label{section4}

In this section we will find a near-perfect matching in $G - S$, and hence prove \Cref{structure}. We will do so deterministically, for any graph $G$ for which the event $\cR$ holds, and the event in~\Cref{lem:DT:unlikely} does not hold. To be precise, define
\begin{align}
\cG = \Big\{ H \subset K_n : H \in \cR \,\text{ and }\, H \not\in \cD(T) \,\text{ for every }\, T \in \cT \Big\}\label{cggg}
\end{align}   
to be the collection of graphs on $n$ vertices such that the event $\cR$ holds, the event $\cD(T)$ does not hold for any set $T \subset V(G)$ with $|T| \le n/2$. Recall from Lemmas~\ref{lem:DT:unlikely} and~\ref{regular} that the graph $G \sim G(n,q)$ is a member of the family $\cG$ with probability at least $1 - n^{-\omega(1)}$.


The aim of this section is the following deterministic theorem. 

\begin{theorem}\label{structure'}
If\/ $G \in \cG$, then there exists a near-perfect matching in $G - S$.
\end{theorem}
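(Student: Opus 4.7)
The plan is to apply Hall's theorem (\Cref{Hall}) via a uniformly random equi-partition $V(G-S) = A \cup B$ with $|A| \leq |B| \leq |A| + 1$: since \Cref{structure'} is deterministic, it suffices to show that the Hall condition $|N_{G-S}(T) \cap B| \geq |T|$ for every $T \subset A$ holds with positive probability over the random partition. The crucial input from $G \in \cG$ is that $G \notin \cD(T)$ for every $T \in \cT$, which yields the deterministic inequality
\begin{align*}
|N_{G-S}(T)| \geq (1-\delta)|N_G(T)|
\end{align*}
for every $T \subset V(G-S)$ with $|T| \leq n/2$. Combined with the pseudorandom estimates guaranteed by $\cR$, this will let us control $|N_{G-S}(T) \cap B|$ via a union bound over $T$, split by $|T|$ into three ranges.

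For \emph{small} $T$ with $|T| \leq 1/q$, property~$(\romannumeral3)$ of $\cR$ gives $|N_G(T)| \geq nq|T|/2$; since $nq \gg \log n$, conditional on $T \subset A$ the hypergeometric variable $|N_{G-S}(T) \cap B|$ has mean $\gg |T|\log n$, so a Chernoff bound combined with $\Pr[T \subset A] \leq 2^{-|T|}$ and the union bound over $\binom{n}{|T|} \leq n^{|T|}$ sets contributes $n^{-\omega(1)}$. For \emph{medium} $T$ with $|T| \gg 1/q$ and $n/2 - |T| \gg 1/q$, property~$(\romannumeral4)$ applied to $T$ itself (when $q|T| \leq \omega_0$) or to a subset of size $\lfloor \omega_0/q \rfloor$ (when $q|T| > \omega_0$) shows that the non-neighbourhood $n - |T| - |N_G(T)|$ is sufficiently small; combined with the elementary inequality $|N_{G-S}(T) \cap B| \geq |N_{G-S}(T)| - (|A| - |T|)$ and a sufficiently small choice of $\delta$, this yields Hall's condition deterministically throughout the medium range.

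For \emph{huge} $T$ with $n/2 - |T| = O(1/q)$, the random partition no longer provides enough concentration, and we proceed via the dual formulation suggested in the proof overview. Setting $T' = A \setminus T$ (a small set) and $U = B \setminus N_G(T)$, we observe that $N_{G-S}(U) \cap A \subset T'$, so $|N_{G-S}(U) \cap A| \leq |T'| = O(1/q)$; moreover, a Hall failure for $T$ forces $|U| > |B| - |T|$. By the bipartite duality (K\"onig's theorem), this produces a Hall witness on the $B$ side of large size but small $A$-neighbourhood, which can be ruled out by applying the small- and medium-set analyses above with the roles of $A$ and $B$ swapped, exploiting the symmetry of the random equi-partition. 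The main obstacle is precisely this huge case: verifying that every combination of sizes $|T'|$ and $|U|$ is handled by either the direct analysis on the $A$ side or its symmetric counterpart on the $B$ side, without circular dependencies, is the most delicate part of the argument. Summing the failure probabilities across all three ranges gives a total of $n^{-\omega(1)} < 1$, so Hall's condition holds with positive probability (in fact with high probability) over the random equi-partition, yielding the required near-perfect matching in $G - S$.
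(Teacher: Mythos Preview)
Your plan is essentially the paper's proof: random (equi-)partition of $V(G-S)$, Hall's theorem, the three-way case split on $|T|$, and the duality trick reducing huge $T$ to a small/medium witness on the $B$ side. Two small points to tighten when you write it out. First, the small/medium cutoff must be $C/q$ for a \emph{large} constant $C$, not $1/q$: your deterministic medium-case inequality $|N_{G-S}(T)\cap B|\ge |N_{G-S}(T)|-(|A|-|T|)\ge |T|$ needs $|N_{G-S}(T)|\ge |A|\approx n/2$, and property~(iv) only gives $|N_G(T)|\ge (1-e^{-q|T|/2})n>n/2$ once $q|T|$ exceeds a fixed constant; the paper therefore handles $1/q\le |T|\le C/q$ inside the small case, invoking property~(iv) (not~(iii)) there to still get $|N_{G'}(T)|\gg |T|\log n$. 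Second, in the huge case take $U=B\setminus N_{G-S}(T)$ rather than $B\setminus N_G(T)$: with your definition $U$ is too small to inherit the inequality $|U|>|B|-|T|$ from the Hall failure, whereas with $N_{G-S}(T)$ both the size bound and the containment $N_{G-S}(U)\cap A\subset A\setminus T$ go through, and property~(iv) then shows $|U|=o(n)$ so that $U$ lands in the small/medium range on the $B$ side.
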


As noted above, Theorem~\ref{structure} follows from Theorem~\ref{structure'}, together with Lemmas~\ref{lem:DT:unlikely} and~\ref{regular}. The key idea in the proof is to consider a \emph{random equipartition} $V(G') = A \cup B$ of the vertex set of $G' = G - S$. In this section, every  probabilistic statement will be with respect to this probability space. We will show, for every graph $G \in \cG$, that with high probability
\begin{align*}
|N_{G'}(T)\cap B| \ge |T|
\end{align*}
holds for every set $T\subset A$. By Hall's theorem (\Cref{Hall}), this implies the existence of an equipartition with a near-perfect matching, as required.


In the proof, we will find it more convenient to work with a uniformly-chosen random subset $A \subset V(G')$, and set $B = V(G')\setminus A$. In other words, each vertex $u \in V(G')$ falls into either $A$ or $B$ with probability $1/2$, independently. Write 
\begin{align}\label{def:F}
\cF = \big\{ |A| \leq |B| \leq |A|+1 \big\}  
\end{align}
for the event that $V(G') = A \cup B$ is an equipartition, and observe that
\begin{align*}
\Prob(\cF) \ge \frac{1}{2\sqrt{n}},
\end{align*}  
by Stirling's formula. Now, for each $T \subset V(G')$, let $\pi(T)$ denote the `bad' event
\begin{align}\label{def:pi}
\pi(T) = \big\{ T \subset A \big\} \cap \big\{ | N_{G'}(T) \cap B | < |T| \big\}.
\end{align}
We will show that
\begin{align}\label{eq:F:and:Hall}
\Prob\Big(\cF \cap \bigcup\limits_{T\subset V(G')}\pi(T)\Big)\ll\frac{1}{\sqrt{n}},
\end{align}
and hence that, conditional on $\cF$ holding, with high probability we have
\begin{align*}
\big|N_{G'}(T)\cap B\big| \ge |T|
\end{align*}
for every $T\subset A$, as required.

In order to prove~\eqref{eq:F:and:Hall}, we will partition into three cases depending on the size of $T$. We begin with the most challenging case, when $T$ is small. Let $C$ be a sufficiently large constant, and define 
\begin{align}\label{def:W1}
\cW_1 = \big\{ T\subset V(G') : |T| \leq C/q \big\},
\end{align}
the collection of `small' subsets of $V(G')$. 

\begin{lemma}\label{p1}
\begin{align*}
\Prob\bigg( \bigcup_{T \in \cW_1} \pi(T) \bigg) = n^{-\omega(1)}.
 \end{align*}   
\end{lemma}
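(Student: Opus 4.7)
The plan is to bound $\Prob(\pi(T))$ individually for each $T \in \cW_1$ with $|T| \geq 1$ (the case $T = \emptyset$ being vacuous), and then take a union bound over $\cW_1$. The two ingredients are (a) a lower bound $|N_{G'}(T)| \gg |T|\log n$ coming from the assumption $G \in \cG$, and (b) a Chernoff bound for the random partition.

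First I would establish (a). Since $|T| \leq C/q$ and $C \leq \omega_0$ for $n$ sufficiently large, property (iii) of $\cR$ covers the subcase $|T| \leq 1/q$ (giving $|N_G(T)| \geq nq|T|/2$), while property (iv) of $\cR$ covers the subcase $1/q < |T| \leq C/q$ (giving $|N_G(T)| \geq (1 - e^{-1/2})n$); in both cases we obtain $|N_G(T)| = \Omega(nq|T|)$. Since $|T| \leq C/q \leq n/2$ and $G \notin \cD(T)$, we also have $|N_G(T) \cap S| \leq \delta |N_G(T)|$, so
$$|N_{G'}(T)| = |N_G(T) \setminus S| \geq (1-\delta)|N_G(T)| = \Omega(nq|T|) \gg |T|\log n,$$
where the final step uses $nq \gg \log n$.

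Next I would use (a) to prove (b). Since the random partition $V(G') = A \cup B$ is independent of $G$, the event $\{T \subset A\}$ has probability $2^{-|T|}$, and conditional on it $|N_{G'}(T) \cap B|$ has distribution $\mathrm{Bin}(|N_{G'}(T)|, 1/2)$. As $|T| \ll |N_{G'}(T)|/\log n$, a standard Chernoff bound yields
$$\Prob\big(|N_{G'}(T) \cap B| < |T| \,\big|\, T \subset A\big) \leq \exp\big(-c|N_{G'}(T)|\big) = n^{-\omega(|T|)},$$
so $\Prob(\pi(T)) \leq 2^{-|T|} \cdot n^{-\omega(|T|)} \leq n^{-\omega(|T|)}$. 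The union bound then gives
$$\Prob\bigg(\bigcup_{T \in \cW_1}\pi(T)\bigg) \leq \sum_{\ell=1}^{\lfloor C/q\rfloor} \binom{n}{\ell} \cdot n^{-\omega(\ell)} \leq \sum_{\ell \geq 1} n^{\ell - \omega(\ell)} = n^{-\omega(1)},$$
as required.

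I do not expect any serious obstacle here: the heart of the argument is a one-line Chernoff bound, and the pseudorandomness of $G$ (via $\cR$) together with the failure of $\cD(T)$ already supply plenty of room. The only mild care required is in splitting into the subcases $|T| \leq 1/q$ and $1/q < |T| \leq C/q$ to invoke the correct property of $\cR$, but both subcases yield the same conclusion $|N_G(T)| = \Omega(nq|T|)$.
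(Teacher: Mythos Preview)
Your proposal is correct and follows essentially the same route as the paper: both obtain the lower bound $|N_{G'}(T)| \gg |T|\log n$ from properties~(iii)/(iv) of $\cR$ together with the failure of $\cD(T)$, and then use a tail bound to kill $\pi(T)$ and a union bound to finish. The only cosmetic difference is that where you invoke Chernoff for $\mathrm{Bin}(|N_{G'}(T)|,1/2)$, the paper instead sums directly over the at most $n^{|T|}$ possible sets $X = N_{G'}(T)\cap B$ of size $<|T|$, obtaining $\Prob(\pi(T)) \le n^{|T|}\cdot 2^{-|N_{G'}(T)|}$; the two bounds are equivalent in strength.
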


\begin{proof}
Fix a set $T \in \cW_1$, so $T\subset V(G')$ and $1 \leq |T| \leq C/q$. The event $\pi(T)$ implies that there exists a set $X \subset N_{G'}(T)$ with $|X| < |T|$ such that 
$$T\subset A, \qquad X \subset B \qquad \text{and} \qquad N_{G'}(T)\setminus X \subset A.$$
Note that these three events are independent. Writing 
\begin{align*}
\cX(T) = \big\{ X \subset N_{G'}(T) : |X| < |T| \big\}   
\end{align*}
for the collection of subsets of $N_{G'}(T)$ of size at most $|T|$, it follows that
$$\Prob\big(\pi(T)\big) \le \sum_{X\in\cX(T)} \Prob(T \subset A)\cdot\Prob(X \subset B)\cdot\Prob(N_{G'}(T)\setminus X \subset A).$$
Since the set $A \subset V(G')$ was chosen uniformly at random, it follows that 
\begin{align}\label{asd}
\Prob\big(\pi(T)\big)
& \leq \sum_{X\in\cX(T)} 2^{-|T|}\cdot 2^{-|X|}\cdot 2^{-|N_{G'}(T)| + |X|} \,\le\, n^{|T|}\cdot 2^{-|N_{G'}(T)|}.
\end{align}
Now, since $G \in \cG$, we have
\begin{align}\label{asdf}
|N_{G'}(T)| > (1 - \delta)|N_G(T)| \gg |T| \log n,
\end{align}
where the first inequality follows because the event $\cD(T)$ does not hold, and the second follows from property (\romannumeral3) of $\cR$ when $|T| \le 1/q$ (since $q \gg n^{-1} \log n$), and from property (\romannumeral4) of $\cR$ when $1/q \le |T| \le C/q$ (since $q \ll n^{-2/3}$).


Combining~\eqref{asd} and~\eqref{asdf}, and taking a union bound over $T \in \cW_1$, we deduce that
\begin{align*}
\Prob\bigg(\bigcup_{T\in\cW_1}\pi(T)\bigg) 
 & \leq \sum_{\ell = 1}^{C/q} \binom{n}{\ell}\cdot n^{-\omega(\ell)} = n^{-\omega(1)},  
\end{align*}
as claimed.
\end{proof}

We next deal with the `medium-sized' sets, 
\begin{align*}
\cW_2 = \big\{ T \subset V(G') : C/q \leq |T| \leq v(G')/2 - C/q \big\}.    
\end{align*}
We will show that if $A \cup B$ is an equipartition, then none of these bad events can occur.


\begin{lemma}\label{p2}
\begin{align*}
\Prob\bigg( \cF \cap \bigcup_{ T \in \cW_2} \pi(T) \bigg) = 0.
 \end{align*}   
\end{lemma}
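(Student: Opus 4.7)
The plan is to show, for each fixed $T\in\cW_2$, that the event $\cF\cap\pi(T)$ is empty, by deriving incompatible upper and lower bounds on $|N_{G'}(T)|$. Assume throughout that both $\cF$ and $\pi(T)$ hold.

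For the upper bound, the point is that $T\subseteq A$ (part of $\pi(T)$) forces $B\cap T=\emptyset$, so $N_{G'}(T)\subseteq V(G')\setminus T$ decomposes as the disjoint union $(N_{G'}(T)\cap A)\sqcup(N_{G'}(T)\cap B)$. Since $N_{G'}(T)\cap A\subseteq A\setminus T$ and $|N_{G'}(T)\cap B|<|T|$ (the other part of $\pi(T)$), it follows that
\[
|N_{G'}(T)| \,<\, (|A|-|T|)+|T| \,=\, |A| \,\leq\, \lceil v(G')/2\rceil,
\]
where the last inequality uses $\cF$.

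For the lower bound, I would use that $V(G')=V(G)\setminus S$ gives $|N_{G'}(T)|\geq|N_G(T)|-|S|$, and that property~(\romannumeral1) of $\cR$ yields $|S|\leq 3n^3q^3=o(n)$ (since $q\ll n^{-2/3}$). Writing $|N_G(T)|=n-|T|-|NN_G(T)|$ with $NN_G(T):=V(G)\setminus T\setminus N_G(T)$, and using the definition $|T|\leq v(G')/2-C/q$ of $\cW_2$, the bound $|N_{G'}(T)|\geq|A|$ reduces to the estimate $|NN_G(T)|\leq C/q$. I would establish this in two sub-cases according to the size of $|T|$: if $q|T|\leq\omega_0$, property~(\romannumeral4) of $\cR$ applied to $T$ directly gives $|NN_G(T)|\leq ne^{-q|T|/2}\leq ne^{-C/2}$; if $q|T|>\omega_0$, I would pass to a subset $T'\subseteq T$ with $|T'|=\lceil\omega_0/q\rceil$, apply property~(\romannumeral4) to $T'$, and then use the elementary monotonicity $NN_G(T)\subseteq NN_G(T')$ (since larger $T$ has fewer common non-neighbours) to transfer the bound to $T$.

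The main obstacle is to verify that the resulting bounds on $|NN_G(T)|$ are indeed at most $C/q$ uniformly across the range of $T\in\cW_2$. Concretely, this requires $ne^{-\omega_0/2}\leq C/q$, equivalently $nq\leq Ce^{\omega_0/2}$, which must be arranged using the two constraints of~\eqref{omega0} on $\omega_0$ together with a sufficiently large choice of the constant $C$ (which is precisely the reason $C$ was chosen to be large in the definitions of $\cW_1$ and $\cW_2$).
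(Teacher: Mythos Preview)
Your upper bound $|N_{G'}(T)|<|A|$ is fine, and your reduction of the lower bound to the sufficient condition $|NN_G(T)|\le C/q$ is logically valid. The problem is that this sufficient condition is simply false for $T$ near the bottom of $\cW_2$, so the plan cannot be completed as stated.

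Concretely, take $|T|=\lceil C/q\rceil$. Then $|NN_G(T)|$ is (with high probability, and certainly under $\cR$(iv)) of order $ne^{-q|T|/2}\approx ne^{-C/2}$. For this to be at most $C/q$ you would need $nq\le Ce^{C/2}$, which is impossible for any fixed constant $C$ since $nq\gg\log n\to\infty$. The constraint you isolate, $nq\le Ce^{\omega_0/2}$, is no better: condition~\eqref{omega0} only gives an \emph{upper} bound $\omega_0\le\log(1/(n^2q^3))$ on $\omega_0$, not a lower bound, and $\omega_0$ is explicitly allowed to tend to infinity arbitrarily slowly. So neither a large choice of $C$ nor the constraints on $\omega_0$ can make $ne^{-C/2}$ (or $ne^{-\omega_0/2}$) fall below $C/q$ across the whole range of $q$.

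The error is in the reduction itself: by substituting the extreme value $|T|=v(G')/2-C/q$ you discarded the $|T|$--dependence that you still need. What you actually require is
\[
|NN_G(T)|\;\le\;\tfrac{n}{2}-|T|-\tfrac{|S|}{2}+O(1),
\]
and this \emph{is} provable: for $|T|\le n/3$ (say) the right-hand side is at least $n/6-o(n)$, and property~(iv) applied to a subset of $T$ of size $C/q$ gives $|NN_G(T)|\le ne^{-C/2}$, which suffices for $C$ large. For $|T|>n/3$ one argues differently, using that every set of size $C/q$ has a neighbour in $T$ (again from property~(iv)), which forces $|NN_G(T)|<C/q$; this is exactly the regime where your target inequality does hold. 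This two-case split is essentially how the paper proceeds.
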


\begin{proof}
We claim that, for every $T \in \cW_2$, we have
\begin{equation}\label{eq:p2:claim}
|N_{G'}(T) \cup T| \ge v(G')/2 + |T|,
\end{equation}
and hence if $A \cup B$ is an equipartition then 
$$|N_{G'}(T) \cap B | \ge |N_{G'}(T) \cup T| - |A| \ge |T|,$$ 
as required, since $|A| \le v(G')/2$ and $T \subset A$.

To prove~\eqref{eq:p2:claim}, recall that by property (\romannumeral4) of $\cR$, we have
\begin{equation}\label{eq:R4:recall}
|N_G(U)| \geq \big( 1 - e^{-q|U|/2} \big)n
\end{equation}
for every $U \subset V(G)$ with $1 \le q|U| \le \omega_0$. Let $T \in \cW_2$, and let $U \subset T$ with $|U| = C/q$. By~\eqref{eq:R4:recall}, it follows that
$$|N_{G'}(T)| \ge |N_G(U)| - |S| \geq \big( 1 - e^{-C/2} \big)n - o(n),$$
since $|S| = o(n)$, by property (\romannumeral1) of $\cR$. Therefore, recalling that $C$ and $n$ are sufficiently large,~\eqref{eq:p2:claim} holds unless $|T| > n/3$, say. If $|T| > n/3$, however, then~\eqref{eq:R4:recall} implies that 
$$|N_G(T) \cup T| > n - C/q,$$
since every set of size $C/q$ has a neighbour in $T$. Since $|T| \le v(G')/2 - C/q$ for every $T \in \cW_2$, it follows that 
$$|N_{G'}(T) \cup T| \ge v(G') - C/q \ge v(G')/2 + |T|,
$$
as claimed.
\end{proof}

It remains to deal with the `large' sets $T$, that is, those in the family 
\begin{align*}
\cW_3 = \big\{ T \subset V(G') : v(G')/2 -C/q \leq |T| \leq v(G')/2 \big\}.    
\end{align*}
We will use symmetry to deduce the following bound from Lemma~\ref{p1}. 

\begin{lemma}\label{p3}
\begin{equation}\label{eq:p3}
\Prob\Big(\cF\cap\bigcup_{T\in\cW_3}\pi(T)\Big)=n^{-\omega(1)}.
 \end{equation}   
\end{lemma}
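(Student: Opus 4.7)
The plan is to reduce \Cref{p3} to the already-established \Cref{p1} via a \emph{swap} trick, using the symmetry of the random partition $V(G') = A \cup B$ under $A \leftrightarrow B$. Specifically, I will show that whenever $\cF \cap \pi(T)$ holds for some $T \in \cW_3$, the swapped event $\tilde{\pi}(T'') := \{T'' \subset B\} \cap \{|N_{G'}(T'') \cap A| < |T''|\}$ holds for some $T'' \subset V(G')$ with $|T''| \le C/q + 1$.

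To construct $T''$, note that under $\cF$ we have $|A| = \lfloor v(G')/2 \rfloor$, so the assumption $T \subset A$ together with $|T| \ge v(G')/2 - C/q$ gives $|A_0| \le C/q$, where $A_0 := A \setminus T$. Now set $T' := B \setminus N_{G'}(T) \subset B$. A standard Hall-type argument shows that $N_{G'}(T') \cap A \subset A_0$: if some $a \in T$ had a neighbour in $T'$, this would give $T' \cap N_{G'}(T) \ne \emptyset$, contradicting the definition of $T'$. Hence $|N_{G'}(T') \cap A| \le |A_0| \le C/q$. Moreover, $\pi(T)$ implies $|N_{G'}(T) \cap B| < |T|$, so
$$|T'| = |B| - |N_{G'}(T) \cap B| > |B| - |T| \ge |A| - |T| = |A_0| \ge |N_{G'}(T') \cap A|.$$
Writing $m := |N_{G'}(T') \cap A|$, I pick any $T'' \subset T'$ with $|T''| = m + 1$; then $T'' \subset B$, $|N_{G'}(T'') \cap A| \le m < |T''|$ (so $\tilde{\pi}(T'')$ holds), and $|T''| \le C/q + 1$.

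This yields the set inclusion
$$\cF \cap \bigcup_{T \in \cW_3} \pi(T) \subset \bigcup_{T'' \subset V(G'),\, |T''| \le C/q + 1} \tilde{\pi}(T'').$$
By the symmetry of the random partition under $A \leftrightarrow B$, $\Prob(\tilde{\pi}(T)) = \Prob(\pi(T))$ for every set $T$, and hence, applying \Cref{p1} with the constant $C$ in the definition of $\cW_1$ increased by $1$ if needed, the right-hand side has probability $n^{-\omega(1)}$, which gives the desired bound. The main obstacle I expect is the truncation step — naively taking $T' = B \setminus N_{G'}(T)$ need not yield a small set, so the key idea is to shrink it to a subset $T''$ of size just slightly larger than $|N_{G'}(T') \cap A|$, preserving the failure of Hall's condition on the $B$-side.
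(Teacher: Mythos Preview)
Your argument is correct, and it follows the same Hall-duality idea as the paper: pass from a large $T\subset A$ violating Hall's condition to the complementary set $B\setminus N_{G'}(T)$ on the $B$-side, and then invoke the small-set case by symmetry of the random partition.

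The one genuine difference is in how you guarantee the dual witness is small. The paper works directly with $U=B\setminus N_{G'}(T)$ and shows $|U|=o(n)$ by appealing to property~(iv) of $\cR$ (choosing $T_0\subset T$ with $|T_0|=\omega_0/q$ to see that $N_{G'}(T)$ misses only $o(n)$ vertices); this lands $U$ in $\cW_1\cup\cW_2$, and the paper then invokes both \Cref{p1} and \Cref{p2} for the swapped events $\pi'$. Your truncation trick is neater: instead of bounding $|T'|$, you observe that $|N_{G'}(T')\cap A|\le |A\setminus T|\le C/q$ and simply pass to a subset $T''\subset T'$ of size $m+1\le C/q+1$, which still violates Hall on the $B$-side. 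This reduces directly to \Cref{p1} alone (with the constant $C$ bumped by $1$), and avoids both the appeal to $\cR$(iv) and the use of \Cref{p2}. The paper's route has the mild advantage that it reuses the existing lemmas verbatim without touching the constant in $\cW_1$; your route is more self-contained and shows that the medium-range lemma is not actually needed for the large-$T$ case.
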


\begin{proof}
Let $T \in \cW_3$, and observe that if $T \subset A$, $|A| \le |B|$ and $|N_{G'}(T)\cap B| < |T|$, then
\begin{align}\label{eq:swapping:to:small}
|B \setminus N_{G'}(T)| > |B| - |T| \ge |A| - |T| = |A \setminus T|.   
\end{align}
Set $U = B \setminus N_{G'}(T)$, and  observe that if $T \subset A$, then $N_{G'}(U) \cap T  = \emptyset$. This implies that 
\begin{align}\label{eq:swapped}
|N_{G'}(U) \cap A| \leq |A \setminus T| < |U|,
\end{align}
by~\eqref{eq:swapping:to:small}. This motivates the following definition: for each $U \subset V(G')$,
\begin{align*}
\pi'(U) = \big\{ U \subset B \big\} \cap \big\{ |N_{G'}(U) \cap A| < |U| \big\}. 
\end{align*}
The following claim will allow us to deduce~\eqref{eq:p3} from Lemmas~\ref{p1} and~\ref{p2}. 


\begin{claim}
$$\cF \cap \bigcup_{T \in \cW_3}\pi(T) \qquad \Rightarrow \qquad \bigcup_{U \in \cW_1 \cup \cW_2} \pi'(U).$$
\end{claim}

\begin{proof}
Fix a set $T \in \cW_3$, and assume that $\pi(T)$ holds, so $T \subset A$ and $|N_{G'}(T)\cap B| < |T|$. By~\eqref{eq:swapped}, it follows that the event $\pi'(U)$ holds, where $U = B \setminus N_{G'}(T)$. It therefore only remains to show that $U \in \cW_1 \cup \cW_2$; that is, that $|U| \le v(G')/2 - C/q$. 

We will in fact show that $|U| = o(n)$. Since $G \in \cG \subset \cR$, and therefore $|S| = o(n)$, this will suffice to prove the claim. To do so, 
we choose $T_0 \subset T$ with $|T_0|=\omega_0/q$, and note that 
$$|N_{G'}(T)| \geq |N_G(T)| - |S| \geq |N_G(T_0)| - |S| = n - o(n),$$
by properties (i) and (iv) of $\cR$, since $\omega_0 \to \infty$ as $n \to \infty$ and $|S| \leq 3n^3q^3 =o(n)$.


Now, since $B \subset V(G')$ and $v(G') \le n$, it follows that
$$|U| = |B \setminus N_{G'}(T)| \le n - |N_{G'}(T)| = o(n),$$
as claimed.
\end{proof}


Now, by the proofs of Lemmas~\ref{p1} and~\ref{p2}, applied to the event $\pi'(U)$ in place of $\pi(T)$, we have
\begin{align*}
\Prob\bigg( \cF \cap\bigcup_{U \in \cW_1 \cup \cW_2} \pi'(U) \bigg) = n^{-\omega(1)}, 
\end{align*}
and therefore~\eqref{eq:p3} follows from the claim.
\end{proof}

We are now ready to prove \Cref{structure'}.

\begin{proof}[Proof of \Cref{structure'}]
Given a graph $G \in \cG$, set $G' = G - S$, let $A$ be a uniformly-chosen random subset of $V(G')$, and set $B = V(G') \setminus S$. Let $\cF$ be the event~\eqref{def:F} that $V(G') = A \cup B$ is an equipartition, and for each $T \subset V(G')$, let the event $\pi(T)$ be as defined in~\eqref{def:pi}. 

Observe that, by Lemmas~\ref{p1}, ~\ref{p2} and~\ref{p3}, we have
\begin{align}\label{2.3}
\Prob\bigg(\cF \cap \bigcup\limits_{T\subset V(G')}\pi(T)\bigg) = n^{-\omega(1)}.
\end{align}
Since $\Prob(\cF) \ge 1/2\sqrt{n}$, it follows that there exists an equipartition $V(G') = A \cup B$ such that 
\begin{align*}
|N_{G'}(T)\cap B| \ge |T|
\end{align*}
for every set $T \subset A$. By Hall's theorem (\Cref{Hall}), this implies the existence of an equi-partition with a near-perfect matching, as required.
\end{proof} 

\section{A Central Limit Theorem for $q \ll n^{-7/9}$}\label{section*}

In this section, we prove a central limit theorem for $s(G(n,q))$ when $n^{-1}\ll q\ll n^{-7/9}$. Our proof is based on the following well-known result of Ruci\'nski~\cite[Theorem 2]{R} on the number of small subgraphs in $G(n,p)$. 

\begin{theorem}[Ruci\'nski]\label{lem:Rucinski}
Let $H$ be an arbitrary graph with at least one edge, and let $X_H$ be the number of copies of $H$ in $G(n,p)$. Then
\begin{align*}
\frac{X_H-\E[X_H]}{\sqrt{\Var(X_H)}}\stackrel{d}{\longrightarrow}\mathcal{N}(0,1)
\end{align*}
if and only if $np^{m}\rightarrow\infty$ and $n^2(1-p)\rightarrow\infty$, where $m=\max\{e(W)/|W|:W\subset H\}$.
\end{theorem}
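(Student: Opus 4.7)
The plan is to treat the two directions of the equivalence separately. In both cases the central object is $\Var(X_H)$, which I would first expand as
\begin{align*}
\Var(X_H) = \sum_{\alpha,\beta} \mathrm{Cov}(\id_\alpha, \id_\beta) = \sum_{W} \Theta\bigl( n^{2|V(H)|-|V(W)|} \, p^{2e(H)-e(W)}(1-p^{e(W)}) \bigr),
\end{align*}
where $\id_\alpha$ is the indicator that the labelled copy $\alpha$ of $H$ in $K_n$ lies in $G(n,p)$, and the outer sum runs over (isomorphism types of) subgraphs $W \subseteq H$ with $e(W)\ge 1$. Identifying the dominant term $W = W^*$ (which depends on whether $p$ is bounded away from $1$ or not) and showing that $\Var(X_H)\to\infty$ under the hypotheses is the starting point.

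For necessity, I would argue that if either hypothesis fails, then the centred and rescaled $X_H$ cannot converge to $\mathcal{N}(0,1)$. If $np^m = O(1)$, let $W^* \subseteq H$ achieve $e(W^*)/|V(W^*)| = m$. Then $\E[X_{W^*}] = \Theta\bigl((np^m)^{|V(W^*)|}\bigr) = O(1)$, so $X_{W^*}$ is asymptotically Poisson (or degenerate) by a standard factorial-moment computation, and a coupling argument shows that the fluctuations of $X_H$ are dominated by those of $X_{W^*}$, forcing a non-Gaussian limit. Symmetrically, if $n^2(1-p) = O(1)$, then the number of non-edges of $G(n,p)$ is asymptotically Poisson, and $X_H$ equals the deterministic count in $K_n$ minus a Poissonian correction coming from these non-edges, again giving non-Gaussian fluctuations.

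For sufficiency, I would use the method of moments: fix $k\ge 1$ and expand
\begin{align*}
\E\bigl[(X_H - \E[X_H])^k\bigr] = \sum_{\alpha_1,\dots,\alpha_k} \E\Bigl[\prod_{i=1}^{k} (\id_{\alpha_i} - \E\id_{\alpha_i})\Bigr],
\end{align*}
and classify the $k$-tuples $(\alpha_1,\dots,\alpha_k)$ by their \emph{intersection pattern}, i.e.\ the labelled union $\bigcup_i \alpha_i$ together with the data of which copy of $H$ each edge belongs to. Under $np^m\to\infty$ and $n^2(1-p)\to\infty$, one shows that the dominant contribution, after dividing by $\Var(X_H)^{k/2}$, comes for even $k$ from patterns in which the $\alpha_i$'s split into $k/2$ disjoint pairs with each pair sharing exactly the dominant subgraph $W^*$, and this contribution tends to $(k-1)!!$; all other patterns contribute $o(1)$. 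For odd $k$ every pattern contributes $o(1)$. Since $\mathcal{N}(0,1)$ is determined by its moments, this yields the CLT.

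The main obstacle is the combinatorial analysis of intersection patterns: one must show that any $k$-tuple whose pattern differs from ``$k/2$ disjoint pairs each sharing $W^*$'' contributes strictly lower-order terms to the $k$-th centred moment, and both hypotheses are used in an essential way to exclude the competing patterns. A secondary difficulty is that as $p$ ranges from very small to close to $1$, the identification of $W^*$ can shift (the factors $1-p^{e(W)}$ become non-trivial as $p\to 1$), so one must treat these regimes uniformly to pin down the correct asymptotics both for $\Var(X_H)$ and for the moment sums.
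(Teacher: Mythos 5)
This is \Cref{lem:Rucinski}, a result of Ruci\'nski that the paper cites from~\cite[Theorem 2]{R} and does not prove; it is used as a black box in Section~\ref{section*} to derive \Cref{cor:xnormal}, so there is no in-paper argument to compare your proposal against.

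As a sketch of a proof of Ruci\'nski's theorem itself, your method-of-moments strategy is essentially the classical route (Ruci\'nski's own sufficiency argument works with cumulants, which is the same computation organised differently), and the variance decomposition by overlap type $W$ is the right starting point. The necessity half is the weakest part of your outline: when $np^m = O(1)$ it is true that $\E[X_{W^*}]$ stays bounded for the densest subgraph $W^*$, but the ``coupling argument'' you invoke to conclude that $X_H$ inherits non-Gaussian fluctuations is not routine --- note that $\E[X_H]$ can still tend to infinity in this regime (take $H$ to be $K_4$ with two pendant edges and $p=n^{-2/3}$, so $np^{3/2}=1$ while $\E[X_H]=\Theta(n^{2/3})$), so one cannot simply point to the atom of $X_H$ at $0$ surviving normalisation; the actual argument identifies the non-normal limit law directly. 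Likewise, pinning down which $W$ dominates $\Var(X_H)$ as $p$ moves through the regimes $p=o(1)$, $p=\Theta(1)$, $p\to 1$, and verifying uniformly that only pairing patterns survive in the $k$-th centred moment, is where the substance lies. These are gaps in detail rather than in strategy, but they constitute essentially the whole content of the theorem.
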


We will in fact only need the following special case of Theorem~\ref{lem:Rucinski}. Let's write $x(G)$ to denote the number of triangles in a graph $G$. 


\begin{corollary}\label{cor:xnormal}
Let $n^{-1} \ll q \ll 1$, and let $G \sim G(n,q)$. Then 
 \begin{align*}
\frac{x(G) - \E[x(G)]}{\sqrt{\Var(x(G))} } \stackrel{d}{\longrightarrow} \mathcal{N}(0,1)
 \end{align*}
\end{corollary}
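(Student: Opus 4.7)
The plan is to deduce Corollary~\ref{cor:xnormal} as an immediate specialisation of Ruci\'nski's theorem (Theorem~\ref{lem:Rucinski}) with $H = K_3$ and $p = q$. Since $x(G)$ is by definition the number of copies of $K_3$ in $G(n,q)$, the conclusion of Ruci\'nski's theorem is exactly the desired convergence to $\mathcal{N}(0,1)$, so the only content of the proof is to verify its two hypotheses with this choice of $H$ and $p$.

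First I would compute the parameter $m = \max\{e(W)/|W| : W \subseteq K_3\}$. The nonempty subgraphs of $K_3$ have $(|W|, e(W))$-values in $\{(1,0), (2,1), (3,2), (3,3)\}$, yielding ratios $0$, $1/2$, $2/3$ and $1$, so $m = m(K_3) = 1$. The first hypothesis $np^m \to \infty$ therefore reads $nq \to \infty$, which is exactly our assumption $q \gg n^{-1}$. For the second hypothesis, since $q \ll 1$ we have $1-q \geq 1/2$ for all sufficiently large $n$, and so $n^2(1-q) \geq n^2/2 \to \infty$. Both hypotheses hold, and Ruci\'nski's theorem immediately yields the claimed convergence.

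There is essentially no obstacle to overcome here: the corollary is a textbook specialisation of a classical theorem, and the only work is the bookkeeping computation of $m(K_3)$ together with the verification of two mild conditions on $q$. The real work of the section lies downstream, in leveraging this CLT for $x(G)$ to prove the CLT for $s(G(n,q))$ in Theorem~\ref{thm:main*}, where one must control the discrepancy between the total number of triangles and the size of the largest triangle-matching via the expected number of pairs of intersecting triangles.
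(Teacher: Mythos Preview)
Your proposal is correct and takes essentially the same approach as the paper: apply Ruci\'nski's theorem with $H=K_3$ and verify the two hypotheses. The paper's proof is simply terser, observing that $e(W)\le |W|$ for every $W\subset K_3$ (so $m\le 1$ and hence $nq^m\ge nq\to\infty$) rather than listing the subgraphs explicitly.
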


\begin{proof}
Apply~\Cref{lem:Rucinski} with $H = K_3$, and note that $e(W) \le |W|$ for every $W \subset K_3$.
\end{proof}



To deduce \Cref{thm:main*} from \Cref{cor:xnormal}, we just need to show that if $n^{-1} \ll q \ll n^{-7/9}$ and $G \sim G(n,q)$, then $s(G)$ is sufficiently close to $x(G)$ with high probability. In order to control the difference $x(G) - s(G)$, we introduce another random variable. Let $y(G)$ be the number of triangles of $G$ that intersect another triangle of $G$, and observe that
\begin{align}\label{eq:s(G)<x(G)<s(G)+y(G)}
s(G) \leq x(G) \leq s(G) + y(G),  
\end{align}
since each triangle in $G$ that is not included in the largest triangle matching must have a common vertex with some other triangle. Hence to control the difference between $x(G)$ and $s(G)$, it will suffice to control $y(G)$. We will do so using the following moment bounds. 

\begin{lemma}\label{lem:y(G)}
If $q \ll n^{-7/9}$, then
\begin{align*}
\E\big[ y(G) \big] = o\big(n^{3/2}q^{3/2} \big) \qquad \textup{and} \qquad \E\big[ y(G)^2 \big] = o\big( n^{3}q^{3} \big).    
\end{align*}   
\end{lemma}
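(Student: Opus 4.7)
The plan is to control $y(G)$ via two small subgraph counts and then reduce both moment bounds to routine enumerations of overlap types. Call two distinct triangles of $G$ a \emph{book} if they share an edge and a \emph{bowtie} if they share exactly one vertex, and let $B(G)$ and $W(G)$ denote the numbers of (unordered) books and bowties in $G$. Every triangle contributing to $y(G)$ shares a vertex with another triangle, hence lies in at least one book or bowtie, and each such unordered configuration contains exactly two triangles, so
\[
y(G) \;\le\; 2 \bigl( B(G) + W(G) \bigr).
\]

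Since a book has $4$ vertices and $5$ edges and a bowtie has $5$ vertices and $6$ edges, the first moment is immediate:
\[
\E[B(G)] = O\bigl(n^4 q^5\bigr), \qquad \E[W(G)] = O\bigl(n^5 q^6\bigr).
\]
The condition $\E[B(G)] = o(n^{3/2}q^{3/2})$ is equivalent to $q \ll n^{-5/7}$, while $\E[W(G)] = o(n^{3/2}q^{3/2})$ is equivalent to $q \ll n^{-7/9}$. The latter is exactly the hypothesis and implies the former, giving the desired first moment bound on $y(G)$.

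For the second moment I would use $\E[y(G)^2] \le 16 \bigl( \E[B(G)^2] + \E[W(G)^2] \bigr)$ and expand each expected square as a sum over ordered pairs $(H_1, H_2)$ of copies of the configuration in $K_n$, grouped by the overlap type $(v,e) = \bigl( |V(H_1 \cup H_2)|, |E(H_1 \cup H_2)| \bigr)$. Each overlap class contributes $O(n^v q^e)$, which is $o(n^3 q^3)$ precisely when $q \ll n^{-(v-3)/(e-3)}$. A short case check, organised by the number of shared vertices and the induced shared subgraph, shows that the maximum of the exponent $(v-3)/(e-3)$ over all overlap types is attained by pairs of vertex-disjoint bowties, where $v = 10$ and $e = 12$, giving ratio $7/9$. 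Every other overlap class—including the entirety of $\E[B(G)^2]$, whose worst case (vertex-disjoint books) has ratio $5/7 < 7/9$—imposes a strictly weaker constraint on $q$. Thus under $q \ll n^{-7/9}$ every class contributes $o(n^3 q^3)$, yielding $\E[y(G)^2] = o(n^3 q^3)$.

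The main (and essentially only) obstacle is the routine but somewhat tedious enumeration of overlap types for pairs of bowties and pairs of books; the argument simultaneously explains the exponent $7/9$ in the hypothesis, since the sharp constraint in both moment bounds comes from the vertex-disjoint bowtie contribution.
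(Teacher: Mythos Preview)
Your proposal is correct and essentially the same as the paper's argument: both bound $y(G)$ by the number of book/bowtie configurations for the first moment, and for the second moment both reduce to enumerating overlap types of pairs of such configurations, with the sharp constraint coming from vertex-disjoint bowties ($n^{10}q^{12}$). The paper organises the second-moment count directly as pairs of triangles $(S,T)$ together with their witnessing triangles, while you package the same cases via $\E[B(G)^2]$ and $\E[W(G)^2]$; the underlying enumeration and the identification of the $7/9$ threshold are identical.
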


\begin{proof}
To bound the expectation of $y(G)$, simply note that the expected number of pairs of triangles sharing a vertex is at most $n^5q^6$, and the expected number of pairs of triangles sharing an edge is at most $n^4q^5$. It follows that
\begin{equation}\label{eq:ExyG}
\E\big[ y(G) \big] \le n^5q^6 + n^4q^5 \ll n^{3/2}q^{3/2},
\end{equation}
as claimed, since $q \ll n^{-7/9}$. 

To bound the second moment, we need to consider all pairs of triangles $(S,T)$ in $G$, each of which intersects another triangle of $G$. There are various cases to consider. Note first that the expected number of such pairs with $S = T$ is just $\E\big[ y(G) \big]$, and
$$\E\big[ y(G) \big] \le n^5q^6 + n^4q^5 \ll n^3q^3,$$
by~\eqref{eq:ExyG}, and since $q \ll n^{-2/3}$. The expected number of pairs $(S,T)$ such that $S$ and $T$ share a vertex can also be bounded in the same way, so we may assume that $S$ and $T$ are disjoint.  




Consider next the case where there exists a triangle $U$ that intersects both $S$ and $T$. The expected number of such triples is at most 
$$n^6q^8 + n^7q^9 \ll n^3q^3,$$
since $U$ may or may not contain a vertex outside $S \cup T$, and $q \ll n^{-2/3}$. 

Finally, the expected number of quadruples $(S,S',T,T')$ of distinct triangles in $G$ such that $S$ and $T$ are disjoint, $S$ intersects $S'$ but not $T$, and $T$ intersects $T'$ but not $S$, is at most
$$n^8 q^{10} + n^9 q^{11} + n^{10} q^{12} \ll n^3q^3,$$
as required, since $q \ll n^{-7/9}$.
\end{proof}


Combining Lemma~\ref{lem:y(G)} with~\eqref{eq:s(G)<x(G)<s(G)+y(G)}, we can now deduce that $s(G)$ is `close' to $x(G)$.

\medskip
\pagebreak

\begin{lemma}\label{lem:x(G)sims(G)}
Let $n^{-1} \ll q \ll n^{-7/9}$, and let $G \sim G(n,q)$. Then 
\begin{equation}\label{eq:xminuss}
|x(G) - s(G)| = o\big( n^{3/2} q^{3/2} \big)
\end{equation}  
with high probability, and moreover 
\begin{equation}\label{eq:Varx:Vars}
\Var\big( s(G) \big)  = \big(1+o(1)\big) \, \Var\big( x(G) \big)
\end{equation}  
as $n \to \infty$. 
\end{lemma}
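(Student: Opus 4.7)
The plan is to exploit the sandwich inequality~\eqref{eq:s(G)<x(G)<s(G)+y(G)}, which tells us that the nonnegative random variable $D := x(G) - s(G)$ satisfies $0 \le D \le y(G)$, and then to invoke the moment estimates for $y(G)$ already established in~\Cref{lem:y(G)}. All the genuine combinatorial work has been done there; what remains for this lemma is routine.

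For the concentration statement~\eqref{eq:xminuss}, I would simply apply Markov's inequality to $y(G)$. For any fixed $\eta > 0$,
\[
\Pr\big( |x(G) - s(G)| \ge \eta n^{3/2} q^{3/2} \big) \le \Pr\big( y(G) \ge \eta n^{3/2} q^{3/2} \big) \le \frac{\E[y(G)]}{\eta n^{3/2} q^{3/2}} \to 0
\]
as $n \to \infty$, by the first bound of~\Cref{lem:y(G)}. Since $\eta > 0$ was arbitrary, this yields $|x(G) - s(G)| = o(n^{3/2}q^{3/2})$ with high probability.

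For~\eqref{eq:Varx:Vars}, I would first record the standard fact that $\Var\big( x(G) \big) = \Theta(n^3 q^3)$ in our range. This follows from the usual second-moment computation for triangle counts: the diagonal contributes $\binom{n}{3} q^3(1-q^3) \sim n^3 q^3 / 6$, pairs of distinct triangles sharing a vertex contribute $0$, and pairs sharing an edge contribute $O(n^4 q^5)$, which is $o(n^3 q^3)$ because $q \ll n^{-7/9} \ll n^{-1/2}$. Writing $s(G) = x(G) - D$ and expanding,
\[
\Var\big( s(G) \big) = \Var\big( x(G) \big) - 2\,\Cov\big( x(G), D \big) + \Var(D).
\]
Since $0 \le D \le y(G)$, the second moment bound of~\Cref{lem:y(G)} gives
\[
\Var(D) \le \E[D^2] \le \E\big[ y(G)^2 \big] = o(n^3 q^3) = o\big( \Var(x(G)) \big),
\]
and Cauchy--Schwarz then yields $|\Cov(x(G), D)| \le \sqrt{\Var(x(G))\,\Var(D)} = o\big( \Var(x(G)) \big)$. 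Substituting these two estimates gives~\eqref{eq:Varx:Vars}.

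The only real obstacle is already behind us: it lies in the case analysis of~\Cref{lem:y(G)}, where the expected number of quadruples of triangles forming two intersecting pairs had to be shown to be $o(n^3 q^3)$, which is precisely where the restriction $q \ll n^{-7/9}$ enters. Given those moment bounds, both halves of the present lemma follow from a one-line Markov argument and an elementary variance decomposition, with no further combinatorial input required.
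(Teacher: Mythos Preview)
Your proposal is correct and follows essentially the same route as the paper: Markov on $y(G)$ for~\eqref{eq:xminuss}, and the variance decomposition $\Var(s(G)) = \Var(x(G)) - 2\,\textup{Cov}(x(G),D) + \Var(D)$ together with $\Var(D) \le \E[y(G)^2] = o(n^3q^3)$ and Cauchy--Schwarz for~\eqref{eq:Varx:Vars}. The only cosmetic difference is that the paper takes a function $\delta = \delta(n) \to 0$ in the Markov step rather than quantifying over all fixed $\eta>0$, but these are equivalent formulations of ``$o(\cdot)$ with high probability''.
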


\begin{proof}
Let $\delta = \delta(n) \to 0$ as $n \to \infty$ sufficiently slowly. 
By Lemma~\ref{lem:y(G)} and Markov's inequality, we have 
\begin{align*}
\Prob\big( y(G) \ge \delta n^{3/2} q^{3/2} \big) \leq  \frac{\E[y(G)]}{\delta n^{3/2} q^{3/2}} = o(1). 
\end{align*}
By \eqref{eq:s(G)<x(G)<s(G)+y(G)}, this implies~\eqref{eq:xminuss}, since we have
\begin{align*}
|x(G)-s(G)|\leq y(G)=o(n^{3/2}q^{3/2}) 
\end{align*}   
with high probability. To prove~\eqref{eq:Varx:Vars}, observe first that
\begin{equation}\label{eq:Var:Cov}
\Var\big( s(G) \big) = \Var\big( x(G) \big) + \Var\big( x(G) - s(G) \big) + 2 \cdot \textup{Cov}\big( x(G),s(G)-x(G)\big).
\end{equation}  
To bound the right-hand side of~\eqref{eq:Varx:Vars}, recall that $\Var(x(G)) = \Theta(n^3q^3)$, and that
\begin{align}\label{eq:VarCov2}
\Var\big( x(G) - s(G) \big) \le \E\big[ (x(G) - s(G))^2 \big] \leq \E\big[ y(G)^2 \big] = o\big( n^{3} q^{3} \big),
\end{align}
by Lemma~\ref{lem:y(G)}. It follows that
\begin{align}\label{eq:VarCov2}
\textup{Cov}\big( x(G),s(G)-x(G)\big) \le \Big( \Var\big( (x(G) \big) \cdot \Var\big( (s(G) - x(G) \big)  \Big)^{1/2} = o\big( n^{3} q^{3} \big),
\end{align}
and hence 
\begin{equation}\label{eq:Var:Cov}
\Var\big( s(G) \big) = \Var\big( x(G) \big) + o\big( n^{3} q^{3} \big) = \big(1+o(1)\big) \, \Var\big( x(G) \big),
\end{equation}  
as required.
\end{proof}

We are now ready to deduce the Central Limit Theorem for $s(G)$.

\begin{proof}[Proof of \Cref{thm:main*}]
Let $q = q(n)$ be a function  satisfying $n^{-1} \ll q \ll n^{-7/9}$, and let $G \sim G(n,q)$. Let $Z \sim \mathcal{N}(0,1)$, and set 
$$s^*(G) = \frac{s(G)-\E[s(G)]}{\sqrt{\Var(s(G))}} \qquad \text{and} \qquad x^*(G) = \frac{x(G) - \E[x(G)]}{\sqrt{\Var(x(G))}}.$$ 
It suffices to show that for arbitrary constants $a<b$,  
\begin{align}\label{eq:inti}
\Prob\big(s^*(G) \in (a,b)\big) = \Prob\big( Z \in (a,b) \big) + o(1)
\end{align}
as $n\rightarrow\infty$. We will deduce this from \Cref{cor:xnormal} and the following claim. 


\begin{claim}
With high probability, 
\begin{align*}
s^*(G) =\big(1+o(1)\big) \, x^*(G) + o(1).
\end{align*}    
\end{claim}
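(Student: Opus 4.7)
The plan is to write $s^*(G)$ as a small perturbation of a rescaled $x^*(G)$, using Lemma~\ref{lem:x(G)sims(G)} to control both the variance ratio and the pointwise difference $x(G) - s(G)$. Concretely, I would begin by splitting
\begin{align*}
s^*(G) \;=\; \frac{s(G) - \E[s(G)]}{\sqrt{\Var(s(G))}} \;=\; \frac{x(G) - \E[x(G)]}{\sqrt{\Var(s(G))}} \;+\; \frac{(s(G) - x(G)) - \E[s(G) - x(G)]}{\sqrt{\Var(s(G))}}.
\end{align*}

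Next I would handle the two pieces separately. For the first piece, \Cref{lem:x(G)sims(G)} gives $\Var(s(G)) = (1+o(1))\Var(x(G))$, so
\begin{align*}
\frac{x(G) - \E[x(G)]}{\sqrt{\Var(s(G))}} \;=\; \frac{\sqrt{\Var(x(G))}}{\sqrt{\Var(s(G))}} \cdot x^*(G) \;=\; (1+o(1))\, x^*(G).
\end{align*}
For the second piece, note that $\sqrt{\Var(s(G))} = \Theta(n^{3/2}q^{3/2})$ since $\Var(x(G)) = \Theta(n^3q^3)$. By~\eqref{eq:s(G)<x(G)<s(G)+y(G)} and the first moment bound in \Cref{lem:y(G)}, $\E|s(G) - x(G)| \le \E[y(G)] = o(n^{3/2}q^{3/2})$, and by~\eqref{eq:xminuss} we have $|s(G) - x(G)| = o(n^{3/2}q^{3/2})$ with high probability. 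Hence both the numerator terms are $o(n^{3/2}q^{3/2})$ with high probability, and after dividing by $\sqrt{\Var(s(G))} = \Theta(n^{3/2}q^{3/2})$ the whole second piece is $o(1)$ with high probability. Combining the two estimates yields the claim.

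There is no real obstacle here — the work has already been done in \Cref{lem:x(G)sims(G)}; the only thing to check is that one does not need to worry about the centering $\E[s(G)] - \E[x(G)]$, but this is controlled by $\E[y(G)]$ exactly as above. Once the claim is established, the main theorem follows quickly: for any $a < b$ and any $\eta > 0$, the claim implies that on a high-probability event we have $s^*(G) \in (a,b)$ whenever $x^*(G) \in (a+\eta, b-\eta)$ and vice versa (after shrinking/enlarging by $\eta$), so \Cref{cor:xnormal} gives $\Prob(s^*(G) \in (a,b)) = \Prob(Z \in (a,b)) + O(\eta) + o(1)$, and letting $\eta \to 0$ establishes~\eqref{eq:inti}.
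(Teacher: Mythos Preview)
Your proposal is correct and follows essentially the same approach as the paper: the paper's proof of this claim is a single sentence invoking Lemma~\ref{lem:x(G)sims(G)} together with $\Var(x(G)) = \Theta(n^3q^3)$, and your write-up is precisely the expanded version of that, including the (necessary) observation that the centering term $\E[s(G)-x(G)]$ is $o(n^{3/2}q^{3/2})$ via $\E[y(G)]$. Your sketch of the deduction of~\eqref{eq:inti} from the claim also matches the paper's argument.
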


\begin{proof}
This follows easily from Lemma~\ref{lem:x(G)sims(G)} and the fact that $\Var\big( x(G) \big) = \Theta(n^3q^3)$. 
\end{proof}


Now, by the claim, there exist $a' = a + o(1)$ and $b' = b + o(1)$ such that
\begin{align*}
\Prob\big( s^*(G) \in (a,b) \big) = \Prob\big( x^*(G) \in (a',b') \big) + o(1).    
\end{align*}
Since $x^*(G)$ converges to the standard normal, by \Cref{cor:xnormal}, we have
\begin{align*}
\Prob\big( x^*(G) \in (a',b') \big) = \Prob\big( Z \in (a',b') \big) + o(1),
\end{align*}
and hence, by the continuity of normal distribution, 
\begin{align*}
\Prob\big(s^*(G) \in (a,b) \big) = \Prob\big( Z \in (a',b') \big) + o(1) = \Prob\big( Z \in (a,b) \big) + o(1),   
\end{align*}
as required.
\end{proof}

\section{Concentration: Proof of Theorem \ref{main1'}}\label{section5}

For convenience, let us fix throughout this section a constant $\varepsilon > 0$ (as in the statement of Theorem~\ref{main1'}), and let $n \in \N$ be sufficiently large. Let $G \sim G(n,q)$ for some $q = q(n)$ with $n^{-1}\log n\ll q\ll n^{-2/3}$, and let $V(G) = \{v_1,v_2,...,v_n\}$. Recall that for any graph $H$, we write $s(H)$ for the number of triangles in the largest triangle-matching in $H$, that is, 
$$s(H)=\frac{|S(H)|}{3}.$$
We will use martingales to prove that $s(G)$ is concentrated on an interval of order $n^{3/2}q^{3/2}$. For each $i \in [n]$, 
write $\cG_i$ for the collection of graphs with vertex set $A_i = \{v_1,\ldots,v_i\}$, and for each $i \le j \le n$, and every graph $H \in \cG_j$, define $H_i = H[A_i]$ to be the subgraph of $H$ induced by the first $i$ vertices. Now, define 
\begin{align}\label{Xi}
X_i = \E\big[ s(G) \,|\, G_i \big]
\end{align}
to be the vertex exposure martingale of $s(H)$. Notice that, by definition,
\begin{align*}
X_0 = \E\big[ s(G) \big] \qquad\textup{ and } \qquad X_{n} = s(G).
\end{align*}
To bound the deviation of $s(G)$ we will use the following inequality of Freedman~\cite{F}.

\begin{lemma}[Freedman's inequality]\label{Freedman}
Let $\{X_i\}_{i=1}^n$ be a martingale with respect to a filtration $\{\cF_i\}_{i=1}^n$ and suppose that
\begin{align*}
    |X_i-X_{i-1}|\leq r 
\end{align*}
for some $r>0$. Let $V_n$ be the predictable quadratic variation of $\{X_i\}_{i=1}^n$
\begin{align}\label{VVV}
V_n=\sum_{i=1}^{n}\E\left[(X_i-X_{i-1})^2 \,\big|\, \cF_{i-1}\right].
\end{align}
Then for every $t,\sigma^2>0$,
\begin{align}\label{PPP}
\Prob\big(|X_{n}-X_0|\geq t\big)\leq\exp\left(-\frac{t^2}{2\sigma^2+rt}\right)+\Prob\left(V_n\geq\sigma^2\right).
\end{align}
\end{lemma}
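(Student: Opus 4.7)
The plan is to prove Freedman's inequality via the classical exponential-supermartingale method, combined with a stopping-time argument that handles the conditioning on the predictable quadratic variation $V_n$. First, by applying the same argument to the martingale $-X_i$ (which has the same increment bound $r$ and the same $V_n$) and taking a union bound, it suffices to establish the one-sided estimate for $X_n - X_0 \geq t$ intersected with $\{ V_n \leq \sigma^2\}$, and then add $\Prob(V_n > \sigma^2)$ at the end.

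The key technical input is the elementary inequality $e^y \leq 1 + y + g(u)\, y^2$ for $|y| \leq u$, where $g(u) = (e^u - 1 - u)/u^2$ is non-decreasing on $u > 0$. Writing $D_i = X_i - X_{i-1}$, applying this with $y = \lambda D_i$ and $u = \lambda r$, then taking conditional expectation and using $\E[D_i \mid \cF_{i-1}] = 0$ together with $1 + z \leq e^z$, one obtains
\[
\E\!\left[ e^{\lambda D_i} \,\big|\, \cF_{i-1} \right] \,\leq\, \exp\!\Big( \lambda^2 g(\lambda r)\, \E\!\left[ D_i^2 \,\big|\, \cF_{i-1} \right] \Big).
\]
Consequently, for every $\lambda > 0$ the process
\[
M_i \,=\, \exp\!\bigg( \lambda (X_i - X_0) \,-\, \lambda^2 g(\lambda r) \sum_{j=1}^{i} \E\!\left[ D_j^2 \,\big|\, \cF_{j-1} \right] \bigg)
\]
is a non-negative supermartingale with $M_0 = 1$.

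Next, I would introduce the stopping time $\tau = \min\{ i \,:\, V_{i+1} > \sigma^2 \} \wedge n$. Since the partial sums of the predictable quadratic variation are themselves predictable, the quadratic variation through time $\tau$ is at most $\sigma^2$, and on the event $\{ V_n \leq \sigma^2 \}$ we have $\tau = n$. Optional stopping gives $\E[M_\tau] \leq 1$, and Markov's inequality applied at level $e^{\lambda t - \lambda^2 g(\lambda r) \sigma^2}$ then yields
\[
\Prob\!\big( X_n - X_0 \geq t,\; V_n \leq \sigma^2 \big) \,\leq\, \exp\!\Big( -\lambda t + \lambda^2 g(\lambda r) \sigma^2 \Big)
\]
for every $\lambda > 0$.

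The only remaining step is to choose $\lambda$ optimally and to extract the Bernstein form stated in the lemma. Using the elementary comparison $u^2 g(u) = e^u - 1 - u \leq \tfrac{u^2/2}{1 - u/3}$ for $0 < u < 3$ (obtained by comparing the Taylor expansion termwise with a geometric series), the exponent becomes at most $-\lambda t + \tfrac{\lambda^2 \sigma^2}{2(1 - \lambda r/3)}$, and the choice $\lambda = t/(\sigma^2 + rt/3)$ produces the one-sided Bernstein-type bound $\exp(-t^2/(2\sigma^2 + 2rt/3))$, which is strictly sharper than the form $\exp(-t^2/(2\sigma^2 + rt))$ in the lemma. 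Combining with the symmetric one-sided estimate for $-X_i$ and with $\Prob(V_n > \sigma^2)$ finishes the proof, with the extra factor of $2$ from the union bound easily absorbed by the slack between the two denominators. The main obstacle is purely bookkeeping in this last optimisation; the genuine conceptual content lies in constructing the exponential supermartingale $M_i$ and using the stopping time $\tau$ to convert the random quantity $V_n$ into a deterministic bound usable in Chernoff's method.
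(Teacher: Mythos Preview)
The paper does not give its own proof of this lemma; it is quoted from Freedman~\cite{F} and used as a black box. Your argument via the exponential supermartingale $M_i$ together with the predictable stopping time $\tau$ is exactly the standard proof of Freedman's inequality, and the derivation up to and including the one-sided bound
\[
\Prob\big(X_n-X_0\ge t,\ V_n<\sigma^2\big)\le \exp\!\Big(-\tfrac{t^2}{2\sigma^2+2rt/3}\Big)
\]
is correct.

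There is, however, a genuine (if minor) gap in your last paragraph. The claim that the factor $2$ from the union bound is ``easily absorbed by the slack between the two denominators'' is false in general: one would need
\[
\frac{t^2}{2\sigma^2+2rt/3}-\frac{t^2}{2\sigma^2+rt}\ge\log 2,
\]
and the left-hand side equals $\dfrac{rt^3/3}{(2\sigma^2+2rt/3)(2\sigma^2+rt)}$, which can be made arbitrarily small by taking $\sigma^2$ large relative to $rt$. So you have proved
\[
\Prob\big(|X_n-X_0|\ge t\big)\le 2\exp\!\Big(-\tfrac{t^2}{2\sigma^2+2rt/3}\Big)+\Prob\big(V_n\ge\sigma^2\big),
\]
which is the usual form of Freedman's inequality, but not literally the statement recorded in the paper. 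For the paper's sole application (the proof of Theorem~\ref{main1'}) this makes no difference: there $2\sigma^2=8n^3q^3$ dominates $rt$, and the resulting bound is roughly $\eps^{9/8}$, so an extra constant factor is harmless. The issue is with the exact constants in the lemma as stated, not with your method.
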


To apply Freedman's inequality, we need two ingredients: a bound on the uniform difference $|X_i - X_{i-1}|$ that holds deterministically, and a bound on the large deviations of the quadratic variation $V_n$ of $\{X_i\}_{i=1}^n$. The uniform difference turns out to be easy to bound.

\begin{observation}\label{diff1}
For each $1 \leq i \leq n$,
\begin{align}\label{uniformbound}
 |X_i - X_{i-1}| \leq 1.   
\end{align}
\end{observation}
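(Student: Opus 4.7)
The plan is to prove \Cref{diff1} by establishing a deterministic Lipschitz property for $s(\cdot)$ and then translating it to the vertex-exposure martingale via the standard coupling. The key combinatorial claim I will show is the following: if two graphs $G, G'$ on a common vertex set differ only in edges incident to a single vertex $v$, then $|s(G) - s(G')| \leq 1$. To see this, let $M$ be a maximum triangle-matching in $G$. Since the triangles of $M$ are vertex-disjoint, at most one of them can contain $v$, and deleting that triangle (if it exists) produces a triangle-matching with at least $|M|-1$ triangles, all of whose edges avoid $v$ and therefore lie in $G'$ as well. Hence $s(G') \geq s(G) - 1$, and the symmetric bound follows by swapping the roles of $G$ and $G'$.

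To convert this into the martingale statement, I will set up the standard vertex-exposure coupling. Writing $A_i = \{v_1, \ldots, v_i\}$, let $E_i$ encode the edges of $G$ from $v_i$ to $A_{i-1}$, so that $G_i = G_{i-1} \cup E_i$, and let $Y$ denote all the remaining random edges of $G$ (those lying entirely in $V(G) \setminus A_i$ together with those from $v_i$ to $V(G) \setminus A_i$). Defining
\[
f(E_i) = \E_Y\bigl[\, s(G_{i-1} \cup E_i \cup Y) \,\big|\, G_{i-1} \,\bigr],
\]
we have $X_i = f(E_i)$ and $X_{i-1} = \E_{E_i}[f(E_i) \mid G_{i-1}]$, so by Jensen's inequality
\[
|X_i - X_{i-1}| \,\leq\, \max_{E_i, E_i'} \bigl| f(E_i) - f(E_i') \bigr|.
\]
Coupling the two inner expectations to share the same realisation of $Y$, for each fixed $Y$ the graphs $G_{i-1} \cup E_i \cup Y$ and $G_{i-1} \cup E_i' \cup Y$ differ only in edges incident to $v_i$; the Lipschitz claim applied pointwise in $Y$ then shows that the maximum on the right is at most $1$, which yields~\eqref{uniformbound}.

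I do not foresee any serious obstacle: the argument reduces entirely to the elementary observation that a single vertex can lie in at most one triangle of a vertex-disjoint triangle-matching, combined with the classical bounded-differences translation from a Lipschitz property to a vertex-exposure martingale bound. The real work of this section will instead lie in controlling the predictable quadratic variation $V_n$ in \eqref{VVV}, where the uniform bound of $1$ on the increments is far too crude and a finer analysis of when $v_i$ actually creates a new triangle will be needed.
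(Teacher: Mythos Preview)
Your proof is correct and takes essentially the same approach as the paper, which simply observes that changing the neighbourhood of $v_i$ cannot change $s(G)$ by more than $1$ (since $v_i$ lies in at most one triangle of any triangle-matching) and says the bound follows easily; you have merely spelled out the standard bounded-differences translation in more detail. One minor slip: your parenthetical description of $Y$ omits the edges between $A_{i-1}$ and $V(G)\setminus A_i$, but since your actual definition of $Y$ is ``all the remaining random edges of $G$'' this does not affect the argument.
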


\begin{proof}
Note that changing the neighbourhood of $v_i$ cannot change $s(G)$ by more than $1$, since $v_i$ can be used in at most one triangle. The claimed bound~\eqref{uniformbound} follows easily.
\end{proof}


Dealing with the quadratic variation $V_n$ is much more challenging. Most of our energy in this section will be spent on proving the following lemma.

\begin{lemma}\label{lem:V_n}
$$\Prob\big( V_n \ge 4n^3q^3 \big) = n^{-\omega(1)}.$$
\end{lemma}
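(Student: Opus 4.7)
The plan is to recast $\E[(X_i-X_{i-1})^2 \mid \cF_{i-1}]$ in terms of a simple triangle count at $v_i$. Set $g_i := s(G) - s(G-v_i)$. Since a maximum triangle-matching uses each vertex at most once, $g_i \in \{0,1\}$ deterministically, with $g_i = \id\{v_i \in S(G)\}$. The key structural observation is that $s(G-v_i)$ depends only on the edges of $G$ not incident to $v_i$, which are independent of the step-$i$ edges from $v_i$ to $A_{i-1}$; hence $\E[s(G-v_i) \,|\, G_i] = \E[s(G-v_i) \,|\, G_{i-1}]$, and so the martingale difference collapses to
\begin{align*}
X_i - X_{i-1} \,=\, U_i - \E\big[ U_i \,\big|\, G_{i-1} \big], \qquad \text{where} \quad U_i := \E\big[ g_i \,\big|\, G_i \big] \in [0,1].
\end{align*}

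Because $U_i \in [0,1]$ forces $U_i^2 \le U_i$, this yields
\begin{align*}
\E\big[ (X_i-X_{i-1})^2 \,\big|\, \cF_{i-1} \big] \,=\, \Var\big( U_i \,\big|\, G_{i-1} \big) \,\le\, \E\big[ U_i \,\big|\, G_{i-1} \big] \,\le\, \E\big[ t_i \,\big|\, G_{i-1} \big],
\end{align*}
where $t_i$ denotes the number of triangles of $G$ through $v_i$ (noting $g_i \le t_i$). Splitting each such triangle by how many of its other two vertices lie in $A_{i-1}$ versus in $\{v_{i+1},\dots,v_n\}$ gives
\begin{align*}
\E\big[ t_i \,\big|\, G_{i-1} \big] \,=\, e(G_{i-1})\, q^2 + (i-1)(n-i)\, q^3 + \binom{n-i}{2}\, q^3 \,\le\, e(G_{i-1})\, q^2 + n^2 q^3.
\end{align*}

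Summing over $i \in [n]$ and using $\sum_{i=1}^n e(G_{i-1}) \le n \cdot e(G)$, one obtains $V_n \le n q^2 e(G) + n^3 q^3$. On the event $\{e(G) \le 2\binom{n}{2} q\}$ this is at most $2 n^3 q^3 < 4 n^3 q^3$; and since $\E[e(G)] = \binom{n}{2} q \gg \log n$ in the range $q \gg n^{-1}\log n$, Chernoff's inequality shows this event has probability $1 - n^{-\omega(1)}$, completing the proof.

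The main obstacle I anticipate is the structural identity $g_i \in \{0,1\}$ combined with the measurability argument $\E[s(G-v_i) \,|\, G_i] = \E[s(G-v_i) \,|\, G_{i-1}]$. Once these are in place, the variance trick $U_i^2 \le U_i$ converts a second-moment estimate into a first-moment one, sidestepping the deterministic bound $|X_i - X_{i-1}| = O(n^2 q^3)$ that the paper's outline pursues via \Cref{diff2}.
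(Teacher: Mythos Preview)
Your proof is correct, and it is genuinely simpler than the paper's. The key insight you exploit --- that $s(G-v_i)$ depends only on edges not incident to $v_i$, and hence has the same conditional expectation given $G_i$ as given $G_{i-1}$ --- collapses the martingale increment to the centred version of a $[0,1]$-valued random variable $U_i$. This immediately gives $\E[(X_i-X_{i-1})^2 \mid G_{i-1}] \le \E[g_i \mid G_{i-1}] \le \E[t_i \mid G_{i-1}]$, a first-moment quantity that is trivial to compute and sum; the only randomness left to control is $e(G)$, handled by a single Chernoff bound.

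The paper instead argues by cases on whether $v_i$ lies in a triangle of $G_i$: on $\{G_i \in \cN_i^*\}$ it uses the trivial bound $|X_i - X_{i-1}| \le 1$ together with $\Prob(G_i \in \cN_i^* \mid G_{i-1}) \le 3n^2q^3$, while on $\{G_i \in \cN_i \setminus \cN_i^*\}$ it establishes the deterministic bound $|X_i - X_{i-1}| \le 7n^2q^3$ via the fairly technical \Cref{lem:differences} and \Cref{lem:nessconditionneq0}. Your approach bypasses this second case entirely. The paper's route does yield a pointwise bound on the increment in the no-triangle case, which is mildly stronger information, but for the purpose of bounding $V_n$ your first-moment reduction is both cleaner and shorter.

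One inconsequential slip: the identity $g_i = \id\{v_i \in S(G)\}$ need not hold exactly, since when there are several maximum triangle-matchings the tie-breaking rule may place $v_i$ in $S(G)$ even though some other maximum matching avoids $v_i$, giving $s(G-v_i) = s(G)$. What you actually use, and what is true, is that $g_i \in \{0,1\}$ and that $g_i = 1$ forces $v_i$ to lie in some triangle, so $g_i \le t_i$.
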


Assuming \Cref{lem:V_n}, we can directly apply Freedman's inequality to prove \Cref{main1'}.

\begin{proof}[Proof of Theorem \ref{main1'}] 
Our task is to prove that
\begin{align*}
\Prob\Big( | X_n - X_0 | \geq 3 \big( n^3 q^{3} \log(1/\varepsilon) \big)^{1/2} \Big) \le \varepsilon,
\end{align*}
where $\{X_i\}_{i=1}^n$ is the vertex exposure martingale of $s(G)$ defined in \eqref{Xi}. By \Cref{diff1} and \Cref{lem:V_n}, we can apply Freedman's inequality with  
\begin{align*}
r = 1, \qquad  t = 3 \big( n^3 q^{3} \log(1/\varepsilon) \big)^{1/2} \qquad \text{and} \qquad \sigma^2 = 4n^3q^3,
\end{align*}
and deduce that
\begin{align*}
\Prob\big( |X_n - X_0| \geq t \big) \leq \exp\left( - \frac{9 n^3 q^3 \log(1/\varepsilon)}{8n^3q^3 + n^{3/2} q^{3/2} \log(1/\varepsilon)}\right) + n^{-\omega(1)} \leq \varepsilon
\end{align*}
as required, since $n$ is sufficiently large and $nq \gg \log n$.
\end{proof}

It remains to prove \Cref{lem:V_n}. Note first that the bound
\begin{align*}
V_n \leq n    
\end{align*}
holds deterministically, by \eqref{uniformbound}. However, since $q \ll n^{-2/3}$, this bound is weaker than the one we want. To obtain a better bound on $V_n$, we will partition the collection $\cG_i$ of graphs with vertex set $A_i = \{v_1,\ldots,v_i\}$ according to whether or not the vertex $v_i$ is contained in a triangle in $G_i$. If it is, then we will use the trivial bound~\eqref{uniformbound}; if not, then we will show that either $|X_i - X_{i-1}| = O(n^2q^3)$, or some vertex of $G$ has unusually high degree.


To be precise, for each $j \in [n]$, define 
\begin{align}\label{NNN}
\cN_j = \big\{ H \in \cG_j \,:\, | N_{H_i}(v_i)| \leq 3qn \, \text{ for all }\, 1 \le i \leq j \big\}.
\end{align}
In other words, $\cN_j \subset \cG_j$ is the collection of graphs in which every vertex has at most $3qn$ `backwards' neighbours. Note that, since $qn \gg \log n$, the random graph $G_j$ is in $\cN_j$ with very high probability. We record this simple fact as the following observation. 

\begin{observation}\label{obs:degrees:ok}
For each $1 \leq i \leq n$,
\begin{align*}
\Prob\big( G_i \in \cN_i \big) = 1 - n^{-\omega(1)}. 
\end{align*}
\end{observation}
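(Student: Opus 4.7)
The plan is to use a Chernoff bound for each vertex followed by a simple union bound. For any $k \in [i]$, the set $N_{G_k}(v_k)$ consists exactly of those vertices in $\{v_1,\ldots,v_{k-1}\}$ that are adjacent to $v_k$ in $G \sim G(n,q)$, and since these edges are present independently with probability $q$, we have $|N_{G_k}(v_k)| \sim \mathrm{Bin}(k-1,q)$, with mean $\mu_k = (k-1)q \le nq$.

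Since we want to bound the probability that $|N_{G_k}(v_k)|$ exceeds $3nq$, and $3nq \ge 3\mu_k$, we are in a regime where the deviation $3nq - \mu_k \ge 2nq$ is at least twice the mean. A standard Chernoff-type estimate (for instance, $\Pr(X \ge \mu + t) \le \exp\bigl(-t^2/(2\mu + 2t/3)\bigr)$ applied with $t = 3nq - \mu_k$) therefore yields
\begin{align*}
\Pr\bigl(|N_{G_k}(v_k)| > 3nq\bigr) \le \exp(-cnq)
\end{align*}
for some absolute constant $c > 0$. Since we are assuming $nq \gg \log n$ (because $q \gg n^{-1}\log n$), this probability is $n^{-\omega(1)}$.

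A union bound over the at most $n$ vertices $v_1,\ldots,v_i$ then gives
\begin{align*}
\Pr\bigl(G_i \notin \cN_i\bigr) \le \sum_{k=1}^{i} \Pr\bigl(|N_{G_k}(v_k)| > 3nq\bigr) \le n \cdot \exp(-cnq) = n^{-\omega(1)},
\end{align*}
as required. There is no serious obstacle here: the only minor point to check is that the Chernoff inequality is being applied in the multiplicative-deviation regime (deviation of order $\mu$), but this is immediate since $3nq$ is always at least three times the mean of $\mathrm{Bin}(k-1,q)$ for $k \le n$.
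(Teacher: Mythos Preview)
Your proof is correct and is exactly the argument the paper has in mind: the paper does not spell out a proof beyond remarking that ``since $qn \gg \log n$, the random graph $G_j$ is in $\cN_j$ with very high probability,'' and your Chernoff-plus-union-bound computation is precisely the standard justification of this remark.
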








We now partition the family $\cN_i$ according to whether or not $v_i$ is in a triangle in $G_i$. Denote
\begin{align}\label{cN*}
 \cN_i^* = \big\{G_i \in \cN_i : v_i\in V(T) \,\text{ for some }\, T \in \cK_3(G_i) \big\}.  
\end{align}
The following lemma bounds the probability that $v_i$ is contained in a triangle without $G_i$ having a vertex with unusually large backwards degree. 


\begin{lemma}\label{diff1'}
For each $1 \leq i \leq n$ and $F\in\cN_{i-1}$
\begin{align*}
\Prob\big( G_i \in \cN^*_i \,\big|\, G_{i-1}=F \big) \le 3n^2q^3
\end{align*} 
\end{lemma}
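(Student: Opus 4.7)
The plan is to observe that since $\cN_i^* \subset \cN_i$ consists of graphs in which $v_i$ is contained in some triangle of $G_i$, we have the containment
\begin{align*}
\big\{ G_i \in \cN_i^* \big\} \subset \big\{ \exists \, j,k < i \,:\, v_jv_k \in E(F) \text{ and } v_iv_j, v_iv_k \in E(G_i) \big\},
\end{align*}
so it suffices to bound the probability of the event on the right. Conditional on $G_{i-1} = F$, the $i-1$ potential edges from $v_i$ to $A_{i-1} = \{v_1,\ldots,v_{i-1}\}$ are independent Bernoulli$(q)$, and in particular independent of $F$. For each edge $v_jv_k \in E(F)$, the probability that $v_iv_j$ and $v_iv_k$ are both present is exactly $q^2$, and a union bound over edges of $F$ yields
\begin{align*}
\Prob\big( G_i \in \cN_i^* \,\big|\, G_{i-1} = F \big) \leq e(F) \cdot q^2.
\end{align*}

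It therefore remains only to bound $e(F)$ using the hypothesis $F \in \cN_{i-1}$. Here the key observation is that every edge of $F$ is counted exactly once by the backwards-degree sum, i.e.
\begin{align*}
e(F) = \sum_{k=1}^{i-1} \big| N_{F_k}(v_k) \big|,
\end{align*}
since the edge $v_jv_k$ with $j < k$ contributes to $N_{F_k}(v_k)$ and no other term. The condition $F \in \cN_{i-1}$ bounds each summand by $3qn$, so $e(F) \leq (i-1) \cdot 3qn \leq 3qn^2$. Combining this with the preceding display gives the desired bound $3n^2 q^3$.

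There is no real obstacle here: the proof is essentially a one-line union bound, and the role of the definition of $\cN_{i-1}$ is precisely to translate the uniform backwards-degree hypothesis into a uniform bound on the total number of edges of $F$, which is exactly what the union bound over candidate triangles through $v_i$ consumes. I would write the argument out in the two short paragraphs indicated above, with the telescoping identity $e(F) = \sum_k |N_{F_k}(v_k)|$ noted explicitly since it is the only slightly non-obvious step.
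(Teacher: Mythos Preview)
Your proof is correct and follows essentially the same approach as the paper: both bound $\Prob(G_i\in\cN_i^*\mid G_{i-1}=F)$ by $e(F)\cdot q^2$ (the paper phrases this as Markov on the expected number of triangles through $v_i$, which is the same computation), and then use the hypothesis $F\in\cN_{i-1}$ to get $e(F)\le 3n^2q$. Your explicit identity $e(F)=\sum_{k\le i-1}|N_{F_k}(v_k)|$ is exactly what the paper has in mind when it asserts $e(G_{i-1})\le 3n^2q$ without further comment.
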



\begin{proof}
Observe that if $G_{i-1}=F \in \cN_{i-1}$, then $e(G_{i-1}) \le 3n^2q$. It follows that, given $G_{i-1}$, the expected number of triangles in $G_i$ containing $v_i$ is at most $3n^2q^3$. Since this holds for any $G_{i-1} \in \cN_{i-1}$, the claimed bound follows by Markov's inequality.  
\end{proof}

It only remains to bound $|X_i - X_{i-1}|$ when $G_i \in \cN_i\setminus\cN_{i}^*$. We will do so using the following deterministic lemma. 

\begin{lemma}\label{diff2}
Let\/ $1\leq i\leq n$. If\/ $G_{i}\in\cN_i\setminus\cN_{i}^*$,\/ then 
\begin{align*}
|X_i - X_{i-1}| \leq 7n^2q^3.
\end{align*} 
\end{lemma}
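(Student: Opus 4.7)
The plan is to decompose $s(G) = s(G - v_i) + Z$, where $Z := s(G) - s(G - v_i) \in \{0,1\}$, since reintroducing the vertex $v_i$ can extend the maximum triangle-matching by at most one triangle. Crucially, $s(G - v_i)$ depends only on the edges of $G$ not incident to $v_i$, and in particular is independent of the backward edges of $v_i$---the only information distinguishing $G_i$ from $G_{i-1}$. Hence $\E\big[s(G-v_i) \,\big|\, G_i\big] = \E\big[s(G-v_i) \,\big|\, G_{i-1}\big]$, and so
$$X_i - X_{i-1} = \Prob\big(Z = 1 \,\big|\, G_i\big) - \Prob\big(Z = 1 \,\big|\, G_{i-1}\big).$$

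Since $Z = 1$ forces $v_i$ to lie in some triangle of $G$, each conditional probability above is bounded by the corresponding conditional expected number of triangles of $G$ through $v_i$. Classifying these triangles by how many of the other two vertices lie in $A_{i-1}$, the three types are \emph{backward} (both in $A_{i-1}$), \emph{mixed} (one in $A_{i-1}$, one in $\{v_{i+1},\ldots,v_n\}$), and \emph{forward} (both in $\{v_{i+1},\ldots,v_n\}$). Conditional on $G_i$, the backward case contributes $0$ because $G_i \notin \cN_i^*$; the mixed case contributes at most $|N_{G_i}(v_i)|(n-i)q^2 \leq 3n^2 q^3$ thanks to $G_i \in \cN_i$; and the forward case contributes at most $\binom{n-i}{2}q^3 \leq \tfrac{1}{2}n^2q^3$, giving a total of at most $\tfrac{7}{2}n^2 q^3$.

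The one technical point lies in the backward case conditional only on $G_{i-1}$, whose contribution is $e(G_{i-1})q^2$. Since $G_i \in \cN_i$ implies $G_{i-1} \in \cN_{i-1}$, summing the backward-degree bound gives
$$e(G_{i-1}) = \sum_{j=1}^{i-1} |N_{G_j}(v_j)| \leq 3qn(i-1) \leq 3qn^2,$$
so this contribution is at most $3n^2q^3$. The mixed and forward contributions conditional on $G_{i-1}$ are bounded similarly by $(i-1)q(n-i)q^2 \leq n^2 q^3$ and $\binom{n-i}{2}q^3 \leq \tfrac{1}{2}n^2q^3$ respectively, summing to $\tfrac{9}{2}n^2q^3$. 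Since the difference of two values in $[0,M]$ is at most $M$, we conclude $|X_i - X_{i-1}| \leq \tfrac{9}{2}n^2q^3 \leq 7n^2 q^3$, as required. The argument is essentially routine counting; the only non-trivial step is the independence observation in the first paragraph, which cancels the bulk of $X_i$ and $X_{i-1}$ and leaves a difference of two $\{0,1\}$-valued expectations of size $O(n^2q^3)$.
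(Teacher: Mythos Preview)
Your proof is correct and in fact cleaner than the paper's. Both arguments ultimately rest on the same counting: the conditional expected number of triangles through $v_i$ is $O(n^2q^3)$, using $G_i\in\cN_i$ to bound the backward degree of $v_i$ and $e(G_{i-1})$. The difference is in how the martingale increment is reduced to this count.

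The paper works directly with the definitions of $X_i$ and $X_{i-1}$, expanding each as a weighted sum over all completions of $G_i$ (respectively $G_{i-1}$) to a full graph. It then partitions the completions of $G_{i-1}$ according to the backward neighbourhood of $v_i$, obtaining an expression of the form
\[
|X_i-X_{i-1}|\le\sum_{F',F''}|s(F')-s(F'')|\cdot\Prob_i(F')\cdot\Prob_{\cE_i}(F''),
\]
where $F'$ and $F''$ agree except on the backward edges of $v_i$. A separate lemma then shows that $s(F')\ne s(F'')$ forces a triangle through $v_i$ with at least one backward edge, and the triangle count finishes the argument.

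Your decomposition $s(G)=s(G-v_i)+Z$ shortcuts all of this machinery: the observation that $s(G-v_i)$ does not depend on the backward edges of $v_i$ immediately gives $\E[s(G-v_i)\mid G_i]=\E[s(G-v_i)\mid G_{i-1}]$, so the increment collapses to a difference of two conditional probabilities of the single event $\{Z=1\}$. This is exactly the cancellation the paper achieves through its partition, but obtained in one line. The resulting bounds on $\Prob(Z=1\mid G_i)$ and $\Prob(Z=1\mid G_{i-1})$ via the triangle count are then essentially identical to the paper's Claims~\ref{22221} and~\ref{22222}.
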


In order to prove \Cref{diff2}, we need to open up the conditional expectation and rewrite the difference in a more transparent way. For each $1 \leq i\leq n$ and $F \in \cG_i$, define 
\begin{align*}
\cH_i(F) = \big\{ H \in \cG_n : H_i = F \big\}.
\end{align*}
Also, for each $F \in \cG_n$, define
\begin{align*}
\cE_i(F) = \big\{ H \in \cG_n : N_{H_j}(v_j) = N_{F_j}(v_j) \,\text{ for every }\, j \ne i \big\}
\end{align*}
to be the collection of graphs that agree with $H$ except for the neighbourhood of $v_i$ in $H_i$. 
Then we can partition $\cH_{i-1}(F)$ into the union of $\cE_i(F')$ over all $F'\in\cH_i(F)$.
\begin{align}\label{partition}
\cH_{i-1}(F)=\bigcup_{F'\in\cH_i(F)} \cE_i(F')
\end{align}

Now with the partition \eqref{partition}, we can open up the condition expectations clearly. Given $F \in \cG_j$ with $j > i$, we define 
$$\Prob_i(F) = \Prob\big( G_j = F \,\big|\, G_i = F_i \big)$$ 
to be the conditional probability of the event $G_j = F$, given $G_i = F_i$. And in particular for $F\in\cG_n$, let 
$$\Prob_{\cE_i}(F) = \Prob\big( G = F \,\big|\, F \in \cE_i(G) \big)$$ 
be the conditional probability of the event $G = F$, given that all of the edges except those backwards from $v_i$ agree with $F$. 

\begin{lemma}\label{lem:differences}
Given $F\in\cG_n$, for every $1\leq i\leq n$ we have
\begin{align}\label{eq:importantdifference}
|X_i(F_i)-X_{i-1}(F_{i-1})|
&\leq\sum_{F'\in\cH_{i}(F)}\sum_{F''\in\cE_{i}(F')}|s(F')-s(F'')|\cdot\Prob_i(F')\cdot\Prob_{\cE_i}(F'').
\end{align}   
\end{lemma}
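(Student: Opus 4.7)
The plan is to expand the two conditional expectations $X_i(F_i)$ and $X_{i-1}(F_{i-1})$ as explicit sums, use the partition of $\cH_{i-1}(F)$ into the sets $\cE_i(F')$ (for $F' \in \cH_i(F)$) to align them over a common index set, and then finish with the triangle inequality.

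Directly from the definition~\eqref{Xi} of the vertex exposure martingale, one has
\begin{align*}
X_i(F_i) = \sum_{F' \in \cH_i(F)} s(F') \cdot \Prob_i(F') \qquad \text{and} \qquad X_{i-1}(F_{i-1}) = \sum_{H \in \cH_{i-1}(F)} s(H) \cdot \Prob_{i-1}(H).
\end{align*}
Decomposing the second sum along the partition, and noting that $F''_{i-1} = F_{i-1}$ for every $F'' \in \cE_i(F')$ with $F' \in \cH_i(F)$, rewrites it as
\begin{align*}
X_{i-1}(F_{i-1}) = \sum_{F' \in \cH_i(F)} \sum_{F'' \in \cE_i(F')} s(F'') \cdot \Prob_{i-1}(F'').
\end{align*}

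The key step is to factor $\Prob_{i-1}(F'')$ through the conditioning on $G_i$. Since $F''$ and $F'$ differ only on edges from $v_i$ to $A_{i-1}$, the chain rule for conditional probabilities gives
\begin{align*}
\Prob_{i-1}(F'') = \Prob(G_i = F''_i \mid G_{i-1} = F_{i-1}) \cdot \Prob(G = F'' \mid G_i = F''_i) = \Prob_{\cE_i}(F'') \cdot \Prob_i(F'),
\end{align*}
where the first factor equals $\Prob_{\cE_i}(F'')$ because both only depend on the backward neighbourhood of $v_i$, and $\Prob_i(F'') = \Prob_i(F')$ because $\Prob_i$ is determined by the `future' edges of its argument (those incident to $\{v_{i+1},\dots,v_n\}$), which coincide for $F'$ and $F''$.

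Finally, inserting the identity $1 = \sum_{F'' \in \cE_i(F')} \Prob_{\cE_i}(F'')$ into the expansion of $X_i(F_i)$ produces the matching double sum
\begin{align*}
X_i(F_i) = \sum_{F' \in \cH_i(F)} \sum_{F'' \in \cE_i(F')} s(F') \cdot \Prob_i(F') \cdot \Prob_{\cE_i}(F''),
\end{align*}
so subtracting term by term and applying the triangle inequality yields the claimed bound~\eqref{eq:importantdifference}. The argument is essentially bookkeeping; the main subtlety lies in verifying the probability factorisation above.
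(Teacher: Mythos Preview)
Your proof is correct and follows essentially the same approach as the paper: both expand the two conditional expectations, use the partition~\eqref{partition} together with the factorisation $\Prob_{i-1}(F'') = \Prob_{\cE_i}(F'')\cdot\Prob_i(F')$, insert $1 = \sum_{F''}\Prob_{\cE_i}(F'')$ to align the sums, and conclude with the triangle inequality. The only difference is cosmetic --- you justify the factorisation via the chain rule in one step, while the paper records~\eqref{eq:tt1} and~\eqref{eq:tt2} separately.
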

\begin{proof}
We first open up the conditional expectations $X_i(F_i)$ and $X_{i-1}(F_{i-1})$ respectively, then bound the difference. Recall by definition $X_{i}(F_{i})$ is the expectation of $s(F')$ over all $F'\in\cH_{i}(F)$
\begin{align}\label{eq:rrr1}
X_{i}(F_{i})=\E\big[ s(G) \,\big|\, G_i = F_{i} \big] = \sum_{F'\in\cH_{i}(F)}s(F')\cdot\Prob_i(F'). 
\end{align}
For $X_{i-1}(F_{i-1})$, we use the partition \eqref{partition} to open up $\cH_{i-1}(F)$,
\begin{align*}
X_{i-1}(F_{i-1})=\E\big[ s(G) \,\big|\, G_{i-1} = F_{i-1} \big] &=\sum_{F''\in\cH_{i-1}(F)}s(F'')\cdot\Prob_{i-1}(F'')\\
&=\sum_{F'\in\cH_{i}(F)}\sum_{F''\in\cE_{i}(F')}s(F'')\cdot\Prob_{\cE_i}(F'') \cdot\Prob_i(F'')
\end{align*}
The last equality holds since by definition $\Prob_{\cE_i}$ is the conditional probability of $\Prob_{i-1}$ given the edges incident from $\{v_{i+1},...,v_{n}\}$, which means
\begin{align}\label{eq:tt1}
\Prob_{i-1}(F'')=\Prob_{\cE_i}(F'')\cdot\Prob_i(F'').    
\end{align}
Notice $F''\in\cE_i(F')$, by definition we have
\begin{align}\label{eq:tt2}
\Prob_i(F'')=\Prob_i(F').    
\end{align}
Then
\begin{align}\label{eq:rrr2}
X_{i-1}(F_{i-1})=\sum_{F'\in\cH_{i}(F)}\left(\sum_{F''\in\cE_{i}(F')}s(F'')\cdot\Prob_{\cE_i}(F'') \right)\cdot\Prob_i(F').
\end{align}

Combine the two decomposition above \eqref{eq:rrr1} and \eqref{eq:rrr2},
\begin{align*}
|X_i(F_i)-X_{i-1}(F_{i-1})|&\leq\sum_{F'\in\cH_{i}(F)}\left(\Big|s(F')-\sum_{F''\in\cE_{i}(F')}s(F'')\cdot\Prob_{\cE_i}(F'')\Big|\right)\cdot\Prob_i(F')
\end{align*}
Since $\sum_{F''\in\cE_{i}(F')}\Prob_{\cE_i}(F'')=1$, using triangle inequalities we can write
\begin{align*}
\Big|s(F')-\sum_{F''\in\cE_{i}(F')}s(F'')\cdot\Prob_{\cE_i}(F'')\Big|&=\Big|\sum_{F''\in\cE_{i}(F')}s(F')\cdot\Prob_{\cE_i}(F'')-\sum_{F''\in\cE_{i}(F')}s(F'')\cdot\Prob_{\cE_i}(F'')\Big|\\
&\leq\sum_{F''\in\cE_{i}(F')}|s(F')-s(F'')|\cdot\Prob_{\cE_i}(F'').
\end{align*}
Therefore substituting this into the above inequality we get
\begin{align*}
|X_i(F_i)-X_{i-1}(F_{i-1})|&\leq\sum_{F'\in\cH_{i}(F)}\sum_{F''\in\cE_{i}(F')}|s(F')-s(F'')|\cdot\Prob_i(F')\cdot\Prob_{\cE_i}(F'')
\end{align*}  
as required.
\end{proof}

Note that $|s(F)-s(F')|\in\{0,1\}$, since $F$ and $F'$ only differ on the edges between $v_i$ and $\{v_1,...,v_{i-1}\}$. We have the following necessary condition for $|s(F)-s(F')|\neq 0$.

\begin{lemma}\label{lem:nessconditionneq0}
Let $F\in\cG_n$ be such that $F_i\in\cN_i\setminus\cN_i^*$, and let $F'\in\cE_i(F)$. If  
$$|s(F)-s(F')|\neq 0,$$ 
then there either exists a triangle $(v_k,v_i,v_l)\in\cK_3(F)$ with $k<i<l$ or a triangle $(v_k,v_i,v_l)\in\cK_3(F')$ with $k<i$.
\end{lemma}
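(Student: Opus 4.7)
The plan is to prove the contrapositive: I will assume that (a) no triangle $(v_k, v_i, v_l) \in \cK_3(F)$ satisfies $k < i < l$, and (b) no triangle $(v_k, v_i, v_l) \in \cK_3(F')$ satisfies $k < i$, and deduce from these assumptions that $s(F) = s(F')$.

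First I would unpack the hypotheses. By the definition of $\cE_i(F)$, the condition $N_{H_j}(v_j) = N_{F_j}(v_j)$ for every $j \neq i$ means that the only edges which may differ between $F$ and $F'$ are the backward edges from $v_i$, i.e., edges of the form $v_k v_i$ with $k < i$. In particular, all edges among $\{v_1, \ldots, v_{i-1}\}$, all edges among $\{v_{i+1}, \ldots, v_n\}$, and all edges between $v_i$ and $\{v_{i+1}, \ldots, v_n\}$ coincide in $F$ and $F'$.

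Next I would use the assumptions to show that every triangle in $F$ (or in $F'$) that contains $v_i$ must have both of its other vertices in $\{v_{i+1}, \ldots, v_n\}$. For triangles in $F$: since $F_i \in \cN_i \setminus \cN_i^*$, the vertex $v_i$ is not in any triangle of $F_i$, which rules out the case that both other vertices lie in $\{v_1, \ldots, v_{i-1}\}$; assumption (a) rules out the case that one vertex is before $v_i$ and the other is after; hence the only remaining possibility is that both are after $v_i$. For triangles in $F'$: assumption (b) directly forbids any triangle at $v_i$ that uses a vertex with index less than $i$, so again both other vertices must lie in $\{v_{i+1}, \ldots, v_n\}$.

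Combining these two points, every triangle in $\cK_3(F) \cup \cK_3(F')$ uses only edges on which $F$ and $F'$ agree (triangles avoiding $v_i$ use shared edges since only backward edges from $v_i$ can differ; triangles through $v_i$ use only forward edges at $v_i$ and edges among $\{v_{i+1},\ldots,v_n\}$, which are also shared). Therefore $\cK_3(F) = \cK_3(F')$, and since $s(\cdot)$ is determined purely by the set of triangles — it is the cardinality of the largest vertex-disjoint subcollection — this forces $s(F) = s(F')$, completing the contrapositive. There is no serious obstacle here: the argument is a direct structural unfolding of the definitions of $\cE_i$, $\cN_i$ and $\cN_i^*$, and the only point that needs care is tracking exactly which edges (and hence which triangles) could possibly differ between $F$ and $F'$.
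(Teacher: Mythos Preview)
Your proof is correct. The contrapositive argument is clean: under assumptions (a) and (b), together with $F_i\notin\cN_i^*$, every triangle through $v_i$ in either $F$ or $F'$ uses only forward edges at $v_i$, and since $F$ and $F'$ differ only on backward edges at $v_i$, this forces $\cK_3(F)=\cK_3(F')$ and hence $s(F)=s(F')$.

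The paper takes a slightly different route. Instead of directly proving $\cK_3(F)=\cK_3(F')$ in the contrapositive, it introduces the auxiliary graph $\hat F\in\cE_i(F)$ obtained by deleting all backward edges at $v_i$, observes that $s(\hat F)\le s(F'')\le s(\hat F)+1$ for every $F''\in\cE_i(F)$, and argues that $s(F'')=s(\hat F)+1$ forces a triangle in $F''$ using some edge $v_kv_i$ with $k<i$. Applying this to both $F$ and $F'$ and then invoking $F_i\notin\cN_i^*$ gives the conclusion. Your approach is arguably more direct, since it avoids the auxiliary graph and the sandwich inequality; the paper's version, on the other hand, makes explicit the monotonicity $\hat F\subset F''$ and the ``one-triangle'' Lipschitz structure, which is conceptually aligned with the surrounding martingale argument. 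Both arguments rest on the same underlying observation: backward edges at $v_i$ can only affect $s$ through triangles that use them.
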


\begin{proof}
Let $\hat{F}\in\cE_i(F)$ be the graph such that there is no edge  from $v_i$ to $\{v_1,...,v_{i-1}\}$, i.e.,
$$N_{\hat{F}_{i-1}}(v_i)=\emptyset,$$
and observe that for arbitrary $F'\in \cE_i(F)$,
$$s(\hat{F})\leq s(F')\leq s(\hat{F})+1.$$

We claim that if  $s(F')=s(\hat{F})+1$, then there exists a triangle $(v_k,v_i,v_l)\in\cK_3(F')$ for some $k<i$. Since $\hat{F}\subset F'$ and $\hat{F}$ and $F'$ differ only in the edges between $v_i$ and $\{v_1,...,v_{i-1}\}$, there must be a triangle containing one such edge $v_kv_i$ ($k<i$) in $F'$ and not in $\hat{F}$. 

Note that if $|s(F')-s(F)|\neq0$, then either $s(F)\neq s(\hat{F})$ or $s(F')\neq s(\hat{F})$. Therefore, there exists a triangle $(v_k,v_i,v_l)\in\cK_3(F)\cup\cK_3(F')$ for some $k<i$. Further since 
$$F_i\notin\cN_i^*$$
which means there is no triangle containing $v_i$ in $F_i$, there could only be triangle $(v_k,v_i,v_l)\in\cK_3(F)$ with $k<i<l$.
\end{proof}

Using Lemmas~\ref{lem:differences} and~\ref{lem:nessconditionneq0}, we can now deduce \Cref{diff2}. 


\begin{proof}[Proof of \Cref{diff2}]
For every $F\in\cG_n$ such that $F_i\in\cN_i\setminus\cN_i^*$,  we define
\begin{align*}
\cM(F)=\Big\{(F',F''): |s(F')-s(F'')|=1,\, 
F'\in\cH_i(F),\, F''\in\cE_{i}(F') \Big\}.    
\end{align*}
According to the expression in \eqref{eq:importantdifference} we can write
\begin{align}\label{cM}
|X_i(F_i)-X_{i-1}(F_{i-1})|\leq\sum_{(F',F'')\in\cM(F)}\Prob_i(F')\cdot\Prob_{\cE_i}(F'').  
\end{align}
By \Cref{lem:nessconditionneq0}, for every $(F',F'') \in \cM(F)$, we have either
$$F' \in \cL_1(F) =\big\{F'\in\cH_{i}(F) : \exists \, k<i<l,\ \text{s.t.}\ (v_k,v_i,v_l)\in\cK_3(F') \big\}$$
or 
$$F''\in \cL_2(F) = \big\{F''\in\cH_{i-1}(F): \exists \, k<i,\ \text{s.t.}\ (v_k,v_i,v_l)\in\cK_3(F'') \big\}.$$
To bound the terms in~\eqref{cM} for which $F' \in \cL_1(F)$, note that
\begin{align*}
 \sum_{F''\in\cE_{i}(F')}\Prob_{\cE_i}(F'')=1  
\end{align*}
for each $F'\in\cL_1(F)$, and therefore
$$\sum_{F'\in\cL_1(F)}\sum_{F''\in\cE_{i}(F')}\Prob_i(F')\cdot\Prob_{\cE_i}(F'') \leq \sum_{F'\in\cL_1(F)}\Prob_i(F')$$
For the second term, recall from~\eqref{eq:tt1} and~\eqref{eq:tt2} that 
$$\Prob_i(F')\cdot\Prob_{\cE_i}(F'') = \Prob_{i-1}(F'')$$
for every $(F',F'') \in \cM(F)$, since $F''\in\cE_i(F')$, and therefore, by~\eqref{partition}, we have
\begin{align*}
\sum_{F'\in\cH_{i}(F)}\sum_{F''\in\cL_2(F)\cap\cE_{i}(F')}\Prob_i(F')\cdot\Prob_{\cE_i}(F'')& = \sum_{F''\in\cL_2(F)\cap\cH_{i-1}(F)}\Prob_{i-1}(F'').
\end{align*}
Hence, we can bound~\eqref{cM} as follows:
\begin{align}\label{eq:tt3}
|X_i(F_i)-X_{i-1}(F_{i-1})| \leq \sum_{F' \in \cL_1(F)} \Prob_i(F') + \sum_{F''\in\cL_2(F)}\Prob_{i-1}(F'')
\end{align}



It only remains to bound the two terms in~\eqref{eq:tt3}.

\begin{claim}\label{22221}
\begin{align*}
\sum_{F'\in\cL_1(F)} \Prob_i(F')\leq 3n^2q^3. 
\end{align*}    
\end{claim}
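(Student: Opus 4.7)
My plan is to reinterpret the sum as a conditional probability and then apply a simple union bound, using the key assumption that $F_i \in \cN_i$.

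First, I would observe that by the definition of $\Prob_i$, the sum $\sum_{F'\in\cL_1(F)} \Prob_i(F')$ equals the conditional probability
\[
\Prob\Big( \exists\, k<i<l \text{ with } (v_k,v_i,v_l)\in\cK_3(G) \,\Big|\, G_i = F_i \Big).
\]
The events in the union are parametrised by pairs $(k,l)$ with $k<i<l$. For such a triple to form a triangle in $G$, we need $v_kv_i \in E(G)$, $v_kv_l \in E(G)$ and $v_iv_l \in E(G)$. Conditional on $G_i = F_i$, the first edge is deterministic (it is present iff $v_k \in N_{F_i}(v_i)$), while the latter two edges, being incident to $v_l$ with $l>i$, remain independent copies of $\mathrm{Bernoulli}(q)$ and are independent of $G_i$.

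Consequently, only those $k<i$ with $v_k \in N_{F_i}(v_i)$ can contribute. Since $F_i \in \cN_i$, the number of such $k$ is at most $|N_{F_i}(v_i)| \leq 3qn$. For each such $k$ and each of the at most $n$ choices of $l > i$, the conditional probability that both $v_kv_l$ and $v_iv_l$ are edges of $G$ is exactly $q^2$. A union bound therefore gives
\[
\sum_{F'\in\cL_1(F)} \Prob_i(F') \leq |N_{F_i}(v_i)| \cdot (n-i) \cdot q^2 \leq 3qn \cdot n \cdot q^2 = 3n^2q^3,
\]
which is the required bound.

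I expect this step to be essentially routine once the interpretation as a conditional probability is made explicit; the only subtlety is to carefully justify that conditioning on $G_i = F_i$ leaves the backward-incident edges from vertices $v_l$ with $l>i$ independent and unconditioned, so that the probability of the pair $\{v_kv_l, v_iv_l\} \subset E(G)$ is indeed $q^2$. The condition $F_i \in \cN_i$ is exactly what prevents the backward-degree factor from blowing up beyond the required $O(qn)$ budget.
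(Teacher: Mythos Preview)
Your proof is correct and follows essentially the same approach as the paper: interpret the sum as the conditional probability $\Prob(G\in\cL_1(F)\mid G_i=F_i)$, then bound it by the expected number of triangles $(v_k,v_i,v_l)$ with $k<i<l$ via a union bound (equivalently, Markov), using $|N_{F_i}(v_i)|\le 3nq$ from the hypothesis $F_i\in\cN_i$. The paper's argument is essentially identical, only phrased slightly more tersely.
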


\begin{proof}
Recall that $\cL_1(F) \subset \cH_{i}(F)$, and observe that therefore 
\begin{align*}
\sum_{F' \in \cL_1(F)}\Prob_i(F') = \Prob\big(G\in\cL_1(F) \,\big|\, G_i = F_i \big),
\end{align*}
since $\Prob_i(F') = \Prob\big( G = F' \,\big|\, G_i = F_i \big)$ for each $F' \in \cH_{i}(F)$. Moreover, we have
$$\Prob\big( G \in\cL_1(F) \,\big|\, G_i = F_i \big) \leq \sum_{\ell = i+1}^n |N_{F_{i-1}}(v_i)| \cdot q^2  \leq 3n^2q^3,$$
since $G \in\cL_1(F)$ implies that $(v_k,v_i,v_l)$ is a triangle in $G$ for some $k < i < \ell$, and the expected number of such triangles, given $G_i = F_i$, is given by the middle term. The final inequality follows since $F_i\in\cN_i$, and therefore $|N_{F_{i-1}}(v_i)|\leq 3nq$.
\end{proof}


Bounding the second term in~\eqref{eq:tt3} is also straightforward.

\begin{claim}\label{22222}
\begin{align*}
\sum_{F''\in\cL_2(F)} \Prob_{i-1}(F'')\leq 4n^2q^3. 
\end{align*}    
\end{claim}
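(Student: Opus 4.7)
The plan is to first rewrite $\sum_{F'' \in \cL_2(F)} \Prob_{i-1}(F'')$ as a single conditional probability. Since $\cL_2(F) \subset \cH_{i-1}(F)$ and $\Prob_{i-1}(F'') = \Prob(G = F'' \mid G_{i-1} = F_{i-1})$, the sum equals $\Prob\big(G \in \cL_2(F) \mid G_{i-1} = F_{i-1}\big)$, i.e.\ the probability that $G$ contains some triangle through $v_i$ using at least one other vertex of $A_{i-1}$, conditional on the induced subgraph $F_{i-1}$ on $A_{i-1}$.

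I would then apply a union bound over the candidate triangles $\{v_a, v_i, v_b\}$, splitting into two cases according to whether the third vertex (besides $v_i$) lies in $A_{i-1}$ or in $\{v_{i+1}, \ldots, v_n\}$. In the first case, with $v_a, v_b \in A_{i-1}$, the edge $v_a v_b$ is already determined by $F_{i-1}$, so the triangle appears in $G$ iff both edges at $v_i$ do; each is present independently with probability $q$ given $G_{i-1} = F_{i-1}$. The number of such candidate triangles equals $e(F_{i-1})$, which is at most $3n^2 q$ thanks to the hypothesis $F_i \in \cN_i$ (the backwards degree of each $v_j$ is at most $3nq$, and these sum to $e(F_{i-1})$). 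This case therefore contributes at most $3n^2 q \cdot q^2 = 3n^2 q^3$.

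In the second case, with (WLOG) $a < i < b$, none of the three edges of the triangle is determined by $F_{i-1}$, and given $G_{i-1} = F_{i-1}$ they are mutually independent, each appearing with probability $q$. The number of such pairs $(a,b)$ is $(i-1)(n-i) \le n^2$, so this case contributes at most $n^2 q^3$. Adding the two cases yields the desired bound $4n^2 q^3$.

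The argument is essentially bookkeeping and presents no real obstacle; it mirrors the proof of Claim~\ref{22221}, with the weaker conditioning on $G_{i-1} = F_{i-1}$ (rather than on $G_i = F_i$) producing the extra Case~A. Note that the hypothesis $F_i \notin \cN_i^*$ is not needed for this bound itself: it played its role earlier, via \Cref{lem:nessconditionneq0}, in restricting attention to the family $\cL_2(F)$.
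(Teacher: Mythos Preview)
Your proof is correct and follows essentially the same approach as the paper: rewrite the sum as the conditional probability $\Prob(G\in\cL_2(F)\mid G_{i-1}=F_{i-1})$, split according to whether the third vertex lies in $A_{i-1}$ or in $\{v_{i+1},\dots,v_n\}$, and bound the two cases by $e(F_{i-1})\cdot q^2\le 3n^2q^3$ (using $F_i\in\cN_i$) and $(i-1)(n-i)\cdot q^3\le n^2q^3$ respectively. Your observation that the hypothesis $F_i\notin\cN_i^*$ is not used in this claim is also correct.
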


\begin{proof}
Observe first that
\begin{align*}
\sum_{F'' \in \cL_2(F)}\Prob_i(F'') = \Prob\big(G \in \cL_2(F) \,\big|\, G_{i-1} = F_{i-1} \big),
\end{align*}
since $\cL_2(F) \subset \cH_{i-1}(F)$. To bound the right-hand side, we break into two cases. First, let 
$$\cA_1 = \big\{ F'' \in \cL_2(F) : \exists \, (v_k,v_\ell,v_i)\in\cK_3(F'') \text{ with } k < \ell < i \big\}.$$
Conditioned on the event $G_{i-1} = F_{i-1}$, the expected number of triangles containing $v_i$ and two vertices of $V(G_{i-1})$ is at most $e(G_{i-1}) \cdot q^2$. Hence, by Markov's inequality,
$$\Prob\big( G \in \cA_1 \,\big|\, G_{i-1} = F_{i-1} \big) \leq e(G_{i-1})\cdot q^2 \leq 3n^2q^3,$$
where the final inequality holds since $F_i \in \cN_i$.

If $F'' \in \cL_2(F) \setminus \cA_1$, then it must belong to the family
$$\cA_2 = \big\{ F'' \in \cL_2(F) : \exists \, (v_k,v_i,v_\ell)\in\cK_3(F'') \text{ with } k < i < \ell \big\}.$$
Note that if $i < \ell$, then none of the edges $v_kv_i$, $v_kv_\ell$ and $v_iv_\ell$ lie in $G_{i-1}$. Therefore, again by Markov's inequality, we have 
\begin{align*}
 \Prob\big( G \in \cA_2 \,\big|\, G_{i-1} = F_{i-1} \big) \leq \E\big[ |\cK_3(G)| \big] \le n^2q^3.   
\end{align*}
Since $\cL_2(F) = \cA_1 \cup \cA_2$, we deduce that
\begin{align*}
\Prob\big( G \in \cL_2(F) \,\big|\, G_{i-1} = F_{i-1} \big) \leq \sum_{i = 1}^2 \Prob\big( G \in \cA_i \,\big|\, G_{i-1} = F_{i-1} \big) \leq 4n^2q^3, 
\end{align*}
as required.
\end{proof}

Combining~\eqref{eq:tt3} with Claims~\ref{22221} and~\ref{22222}, it follows that
\begin{align*}
|X_i(F_i)-X_{i-1}(F_{i-1})|
&\leq\sum_{F'\in\cL_1(F)}\Prob_i(F')+\sum_{F''\in\cL_2(F)}\Prob_{i-1}(F'')\leq 7n^2q^3
\end{align*}
completing the proof of the lemma.
\end{proof}

Finally, putting \Cref{diff1}, \Cref{diff1'} and \Cref{diff2} together, we can bound the quadratic variation $V_n$.

\begin{proof}[Proof of \Cref{lem:V_n}]
Recall from Observations~\ref{diff1} and~\ref{obs:degrees:ok} that
$$\Prob\big( G_i \notin \cN_i \big) \le n^{-\omega(1)} \qquad \text{and} \qquad |X_i - X_{i-1}| \leq 1$$
for every $1 \le i \le n$. It will therefore suffice to show that the bound
\begin{align*}
\E\big[(X_i - X_{i-1} )^2 \,\big|\, G_{i-1} \in \cN_{i-1} \big] \leq 4n^2q^3    
\end{align*}
holds deterministically for every $1 \le i \le n$. We will in fact show that
\begin{align}\label{eq:Vn:aim}
\E\big[(X_i - X_{i-1} )^2 \,\big|\, G_{i-1} = F \big] \leq 4n^2q^3
\end{align}
for every $F \in \cN_{i-1}$. 
In order to do so, observe first that
\begin{align}\label{eq:V_n:basic}
\E\big[(X_i-X_{i-1})^2 \,\big|\, G_{i-1} = F \big] = \sum_{F'\in\cF_i(F)}|X_i(F')-X_{i-1}(F)|^2 \cdot \Prob_{i-1}(F'),
\end{align}
where
$$\cF_i(F)= \big\{ F' \in \cG_i:F'_{i-1} = F \big\}.$$
In order to bound the sum on the right-hand side, we will partition $\cF_i(F)$ into three parts.

First, observe that, by \Cref{diff1'}, we have 
\begin{align}\label{eq:oo1}
\sum_{F'\in \cN_i^{*} \cap \cF_i(F)} \Prob_{i-1}(F')
= \Prob\big( G_{i} \in \cN_{i}^* \,\big|\, G_{i-1} = F \big)\leq 3n^2q^3.    
\end{align}
Since $|X_i - X_{i-1}| \leq 1$, by \Cref{diff1}, this is also an upper bound for the terms on the right-hand side of~\eqref{eq:V_n:basic} with  $F' \in \cN_i^*$. 
Similarly, by \Cref{obs:degrees:ok}, we have  
\begin{align}\label{eq:oo3}
\sum_{F' \in \cN_i^c \cap \cF_i(F)} |X_i(F') - X_{i-1}(F)|^2 \cdot \Prob_{i-1}(F') \leq \Prob\big( \cN_i^c \,\big|\, G_{i-1} = F \big) = n^{-\omega(1)}.   
\end{align}
Finally, recall from Lemma~\ref{diff2} that if $G_{i} \in \cN_i \setminus \cN_{i}^*$, then 
\begin{align*}
|X_i - X_{i-1}| \leq 7n^2q^3,
\end{align*} 
and therefore
\begin{align}\label{eq:oo2}
\sum_{F' \in \cF_i(F) \cap\cN_i \setminus \cN_i^*} |X_i(F') - X_{i-1}(F)|^2 \cdot \Prob_{i-1}(F') \leq \big( 7n^2q^3 \big)^2,
\end{align}
since the probabilities $\Prob_{i-1}(F')$ sum to at most $1$. 




Summing the right-hand sides of \eqref{eq:oo1}, \eqref{eq:oo3} and \eqref{eq:oo2}, and recalling~\eqref{eq:V_n:basic}, it follows that
\begin{align*}
\E\big[(X_i-X_{i-1})^2 \,\big|\, G_{i-1} = F \big] \leq 3n^2q^3 + \big( 7n^2q^3 \big)^2 + n^{-\omega(1)} \leq 4n^2q^3,
\end{align*}
where the final inequality holds for all sufficiently large $n$, since $n^{-\omega(1)} \ll n^2q^3 = o(1)$.

Since $F$ was an arbitrary element of $\cN_{i-1}$, this completes the proof of~\eqref{eq:Vn:aim}. As observed above, it follows by Observations~\ref{diff1} and~\ref{obs:degrees:ok} that
\begin{align*}
\Prob\big( V_n \geq 4n^3q^3 \big) \leq \sum_{i = 1}^n \Prob\big( G_i \notin \cN_i \big) \leq n\cdot n^{-\omega(1)}=n^{-\omega(1)},
\end{align*}
as required.
\end{proof}


\medskip

\section{Non-concentration: Proof of Theorem \ref{main2'}}\label{section6}

In order to prove the non-concentration of $s(G(n,q))$, we will adapt the coupling method of Heckel~\cite{H2} (more precisely, the version given by Heckel and Riordan~\cite{HR}) to our setting. The main challenge we will face is that their method was designed for random graphs with constant edge probability, whereas we need to deal with sequences of sparse random graphs whose edge probability varies with $n$.

In this section we will write $s(n,q)$ to denote $s(G(n,q))$. Let's fix a smooth function $q(n)$ with $n^{-1}\log n \ll q(n) \ll n^{-2/3}$, and let $\varepsilon > 0$ be sufficiently small and $n_0$ be sufficiently large. For every $n\geq n_0$ we write 
$$\alpha(n)= 3 \lfloor \varepsilon n^{3/2}q(n)^{3/2} \rfloor \qquad\text{and}\qquad n'=n+\alpha(n).$$
Our main purpose is to understand the relationship between $s(n,q(n))$ and $s(n',q(n'))$ for every $n\geq n_0$. To do this, we consider a bridge random variable $s(n',q(n))$ on $G(n',q(n))$, and construct two couplings to connect it with $s(n,q(n))$ and $s(n',q(n'))$ respectively.


We will use the method of Heckel and Riordan~\cite{HR} to obtain a coupling of $G(n,q(n))$ and $G(n',q(n))$; note that the edge probability of these two random graphs is the same.

\begin{lemma}\label{lem:coupling1}
There exists a coupling of $G(n,q(n))$ and $G(n',q(n))$ such that 
\begin{align*}
\Prob\bigg(s(n',q(n))\geq s(n,q(n))+\frac{\alpha(n)}{3}\bigg)\geq\frac12
\end{align*}
for every $n\geq n_0$.
\end{lemma}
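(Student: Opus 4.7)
The plan is to construct the coupling by rearranging the distributions of $s$ on the two graphs, following the spirit of the Heckel--Riordan method. The key observation is that $\alpha(n)/3 = \lfloor \varepsilon n^{3/2} q(n)^{3/2} \rfloor$ is only a small constant multiple of the typical standard deviation of $s(n, q(n))$, which is of order $n^{3/2} q(n)^{3/2}$ by Theorem~\ref{main1'} (the upper bound) and by the closeness of $s(n, q(n))$ to the triangle count $x(G(n, q(n)))$ together with Corollary~\ref{cor:xnormal} (the matching lower bound on the variance, at least in the subrange $q \ll n^{-7/9}$).

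First I would verify that the natural coupling, which embeds $G(n, q(n))$ as an induced subgraph of $G(n', q(n))$ via shared edges, is insufficient: the expected difference $\E[s(n', q(n)) - s(n, q(n))]$ under this coupling is of order $\alpha(n) \cdot n^2 q(n)^3 = o(\alpha(n)/3)$ when $q(n) \ll n^{-2/3}$, so Markov's inequality yields only $\Pr(s(n', q(n)) - s(n, q(n)) \ge \alpha(n)/3) = o(1)$, much less than $1/2$. Instead I would use a shift-type coupling on the cumulative distribution functions. Letting $F_1$ and $F_2$ denote the CDFs of $s(n, q(n))$ and $s(n', q(n))$ respectively, and $U \sim \mathrm{Unif}[0, 1]$, define
\[
  X = F_1^{-1}(U), \qquad Y = F_2^{-1}\bigl( (U + v) \bmod 1 \bigr),
\]
for a small shift parameter $v = v(\varepsilon) \in (0, 1/2)$ to be chosen. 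By construction $X \sim s(n, q(n))$ and $Y \sim s(n', q(n))$. For $U \in [0, 1-v]$, which occurs with probability $1-v \ge 1/2$, the difference $Y - X$ is at least $\alpha(n)/3$ provided the densities of $s(n, q(n))$ and $s(n', q(n))$ at central quantiles are at most of order $n^{-3/2} q(n)^{-3/2}$, which determines $v$ as a small multiple of $\varepsilon$. A coupling of the graphs with the desired property is then produced via the Skorokhod construction: sample $(X, Y)$ as above, then independently draw $G \sim G(n, q(n))$ and $G' \sim G(n', q(n))$ from their conditional distributions given $s(G) = X$ and $s(G') = Y$.

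The main obstacle I expect is establishing the density upper bound (anti-concentration) for $s(n, q(n))$ at central quantiles: in the subrange $q \ll n^{-7/9}$ it follows from the Gaussian limit in Theorem~\ref{thm:main*}, but in the larger subrange $n^{-7/9} \le q \ll n^{-2/3}$ it would require either a local-limit-type refinement of the martingale concentration argument of Section~\ref{section5}, or a direct second-moment calculation leveraging the fact that $s(n, q(n))$ differs from the triangle count only by a lower-order random variable.
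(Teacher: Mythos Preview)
Your approach diverges sharply from the paper's and contains a genuine gap that you yourself flag. The quantile-shift coupling you propose requires, as its key input, that $\Prob\bigl(s(n,q(n)) \in [x,x+\alpha(n)/3]\bigr) \le v$ uniformly in $x$ for some small constant $v$. In the subrange $q \ll n^{-7/9}$ this can be extracted from the CLT (Theorem~\ref{thm:main*}), but in the remaining range $n^{-7/9} \le q \ll n^{-2/3}$ no such anti-concentration is available anywhere in the paper, and you offer no route to it beyond speculation about a local limit theorem. Worse, Lemma~\ref{lem:coupling1} is precisely the engine behind the non-concentration result Theorem~\ref{main2'}; your proposal would have it consume, as an assumption, an anti-concentration statement of the same nature as what it is meant to help produce. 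So the argument either covers only part of the claimed range or is, in spirit, begging the question.

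The paper instead uses a constructive coupling that needs no distributional information about $s$ whatsoever. It first proves a single-step version (Lemma~\ref{coupling1}): for any $q_* \ll m^{-1/2}$ there is a coupling of $G(m,q_*)$ and $G(m+3,q_*)$ with
\[
\Prob\bigl(s(m+3,q_*) \ge s(m,q_*)+1\bigr) \ge 1 - \E\bigl[K_3(G(m+3,q_*))\bigr]^{-1/2}.
\]
This is the Heckel--Riordan planting trick: form $Q$ from $G(m+3,q_*)$ by adding the three edges on a uniformly random triple $T$; then $Q[V\setminus T]$ is distributed exactly as $G(m,q_*)$ and $s(Q) \ge s(Q[V\setminus T]) + 1$ deterministically, while a short second-moment calculation on the triangle count gives $d_{\mathrm{TV}}(Q,G(m+3,q_*)) \le \E[K_3]^{-1/2}$. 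Since $\E[K_3(G(m+3,q))] \ge 4\alpha(n)^2$ for every $n \le m \le n'$, each step fails with probability at most $1/(2\alpha(n))$; chaining the $\alpha(n)/3$ steps from $n$ to $n'$ via a union bound gives the lemma. The essential point is that this coupling is built from the \emph{structure} of the random graph (plant a triangle, delete its vertices) rather than from the law of $s$; that is what makes it work uniformly across the full range and avoids any circularity.
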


We will prove Lemma~\ref{lem:coupling1} in Section~\ref{sec:pf:lem:coupling1}, below. In order to couple the random graphs $G(n',q(n'))$ and $G(n',q(n))$, we will use sprinkling. 


\begin{lemma}\label{lem:coupling2}
There exists a coupling of $G(n',q(n))$ and $G(n',q(n'))$ such that 
\begin{align}\label{eq:coupling2}
\Prob\big(s(n',q(n))\leq s(n',q(n'))+\varepsilon\alpha(n)\big)\geq 1 - \varepsilon
\end{align}
for every $n\geq n_0$.
\end{lemma}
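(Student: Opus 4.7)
My plan is to construct a monotone coupling of $G(n',q(n))$ and $G(n',q(n'))$ using independent uniform variables $U_{ij}\in[0,1]$ indexed by pairs $ij\in\binom{[n']}{2}$: put $ij$ in $G(n',q(n))$ iff $U_{ij}<q(n)$, and in $G(n',q(n'))$ iff $U_{ij}<q(n')$. One of the two graphs contains the other. If $q(n)\le q(n')$, then $G(n',q(n))\subset G(n',q(n'))$, so by monotonicity of $s$ the desired inequality $s(n',q(n))\le s(n',q(n'))$ holds deterministically. So I may assume $q(n)>q(n')$, and hence $G(n',q(n'))\subset G(n',q(n))$.

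In that case, let $M^*$ be the optimal triangle-matching in $G(n',q(n))$; every triangle of $M^*$ whose three edges survive into $G(n',q(n'))$ contributes to a triangle-matching of $G(n',q(n'))$, so
$$s(n',q(n))-s(n',q(n'))\le \big|\{T\in M^*:T\not\subset G(n',q(n'))\}\big|.$$
Conditional on $G(n',q(n))$, each of its edges lies in $G(n',q(n'))$ independently with probability $q(n')/q(n)$, so the conditional expected number of destroyed matching triangles is at most $3s(n',q(n))(q(n)-q(n'))/q(n)$. Since $\E[s(n',q(n))]\le\binom{n'}{3}q(n)^3=O(n^3q(n)^3)$, taking expectations yields
$$\E\big[s(n',q(n))-s(n',q(n'))\big]=O\big(n^3q(n)^2\,(q(n)-q(n'))\big).$$

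The remaining task is to bound $|q(n)-q(n')|$ using smoothness. A short bootstrap, which uses $\alpha(n)=O(n^{3/2}q(n)^{3/2})=o(n)$, shows that $q(k)=(1+o(1))q(n)$ and $k=(1+o(1))n$ uniformly for $n\le k\le n'$, so
$$|q(n')-q(n)|\le\sum_{k=n}^{n'-1}|q(k+1)-q(k)|=o\bigg(\frac{\alpha(n)}{q(n)^2n^3}\bigg)=o\bigg(\frac{1}{n^{3/2}q(n)^{1/2}}\bigg).$$
Substituting into the previous display, $\E[s(n',q(n))-s(n',q(n'))]=o(n^{3/2}q(n)^{3/2})=o(\alpha(n))$, and Markov's inequality gives $\Prob(s(n',q(n))-s(n',q(n'))>\varepsilon\alpha(n))=o(1)$, which is at most $\varepsilon$ for $n\ge n_0$.

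The step that really requires care is replacing the naive Lipschitz bound $s(G)-s(H)\le|E(G)\setminus E(H)|$ by the finer count of destroyed matching triangles. The naive bound would yield an expected error of order $n^2(q(n)-q(n'))=o(n^{1/2}q(n)^{-1/2})$, which exceeds $\alpha(n)=\Theta(n^{3/2}q(n)^{3/2})$ throughout $n^{-1}\log n\ll q\ll n^{-2/3}$ (since $nq(n)^2\to0$ in this range) and is therefore too weak to conclude. Exploiting that matching triangles are rare, $O(n^3q(n)^3)$, is exactly what lets the smoothness hypothesis close the argument.
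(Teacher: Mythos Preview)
Your proof is correct and takes a genuinely different route from the paper. The paper couples $G(n',q(n))$ and $G(n',q(n'))$ by sprinkling, writing $G(n',q(n)) \sim G(n',q(n')) \cup G(n',x)$ with $x = (q(n)-q(n'))/(1-q(n'))$, and then bounds $s(n',q(n)) - s(n',q(n'))$ by the total number of \emph{new} triangles created by the sprinkled edges. This requires a separate lemma counting triangles that use $0$, $1$ or $2$ edges of $H = G(n',q(n'))$, together with a Chernoff bound ensuring $\Delta(H) \le 2nq(n)$ with high probability; the dominant term there is $n\Delta(H)^2x = O(n^3q(n)^2|q(n)-q(n')|)$. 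Your argument is more elementary: you bound the difference by the number of triangles in the optimal matching $M^*$ of $G(n',q(n))$ that lose an edge, use that under the monotone coupling each edge of $G(n',q(n))$ survives into $G(n',q(n'))$ independently with probability $q(n')/q(n)$, and then simply average using $\E[s(n',q(n))]\le\binom{n'}{3}q(n)^3$ and Markov. Both routes produce the same order bound $O(n^3q(n)^2|q(n)-q(n')|)$ on the expected difference, and both close via the smoothness hypothesis in exactly the way you describe; your version just avoids the auxiliary degree bound and the triangle-counting lemma, at the cost of getting a high-probability statement only through Markov rather than through a first-moment bound on a deterministic upper bound.
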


We will prove Lemma~\ref{lem:coupling2} in Section~\ref{sec:pf:lem:coupling2}. Using our two couplings,  Lemmas~\ref{lem:coupling1} and~\ref{lem:coupling2}, we can easily deduce that if $s(n,q(n))$ 
is concentrated on an interval of size $o(\alpha(n))$ for every $n \ge n_0$, then its typical value 
must grow linearly with $n$, and therefore (since $n^2 q^3 \ll 1$) faster than the typical number of triangles in $G(n,q(n))$. 


\begin{lemma}\label{lem:s(n)ands(n')}
For every $n\geq n_0$, suppose there exists an interval $[a_n,b_n]$ of length 
\begin{equation*}
|b_n-a_n|\leq\varepsilon \alpha(n)   
\end{equation*}
such that
\begin{equation*}
\Prob\big(s(n,q(n))\in[a_n,b_n]\big)\geq1-\varepsilon. 
\end{equation*}
Then, deterministically,
\begin{equation*}
a_{n'}-a_n\geq(1/3-5\varepsilon)(n'-n).    
\end{equation*}
\end{lemma}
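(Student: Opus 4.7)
The strategy is to glue the two couplings from Lemmas~\ref{lem:coupling1} and~\ref{lem:coupling2} together via their shared intermediate random graph $G(n',q(n))$, intersect the resulting favourable events with the two concentration events coming from the hypothesis, and then read off the required chain of inequalities between $a_n$ and $a_{n'}$.

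First, since the two couplings have the same marginal $G(n',q(n))$, they can be composed into a joint coupling of the triple
\[
\bigl(G(n,q(n)),\, G(n',q(n)),\, G(n',q(n'))\bigr)
\]
with the correct marginal distributions on each coordinate. Let $E_1$ be the event from Lemma~\ref{lem:coupling1} (so $s(n',q(n)) \ge s(n,q(n)) + \alpha(n)/3$), let $E_2$ be the event from Lemma~\ref{lem:coupling2} (so $s(n',q(n)) \le s(n',q(n')) + \varepsilon\alpha(n)$), and let $E_3 = \{s(n,q(n)) \in [a_n,b_n]\}$ and $E_4 = \{s(n',q(n')) \in [a_{n'}, b_{n'}]\}$. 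By marginal correctness and the lemma's hypothesis, $\Pr(E_3), \Pr(E_4) \ge 1 - \varepsilon$, and by the two coupling lemmas $\Pr(E_1) \ge 1/2$ and $\Pr(E_2) \ge 1 - \varepsilon$, so a union bound gives $\Pr(E_1 \cap E_2 \cap E_3 \cap E_4) \ge 1/2 - 3\varepsilon > 0$ for $\varepsilon$ small.

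Second, on this non-empty intersection the inequalities chain: from $E_1 \cap E_3$ one has $s(n',q(n)) \ge a_n + \alpha(n)/3$, while from $E_2 \cap E_4$ together with $b_{n'} \le a_{n'} + \varepsilon \alpha(n')$ one has
\[
s(n',q(n)) \le b_{n'} + \varepsilon \alpha(n) \le a_{n'} + \varepsilon\bigl(\alpha(n) + \alpha(n')\bigr).
\]
Combining the two bounds and using $n' - n = \alpha(n)$, the quantities $a_n, a_{n'}$ are deterministic, so the inequality
\[
a_{n'} - a_n \ge (1/3 - \varepsilon)(n' - n) - \varepsilon \alpha(n')
\]
holds unconditionally.

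The only non-routine step remaining is to show $\alpha(n') \le 4\alpha(n)$, so that the error term $\varepsilon \alpha(n')$ is absorbed into $4\varepsilon(n'-n)$ and the required bound $(1/3 - 5\varepsilon)(n'-n)$ drops out. This is where the smoothness hypothesis of Definition~\ref{def:smoothfunction} enters: summing $q(k+1) - q(k) = o\bigl(1/(q(k)^2 k^3)\bigr)$ over the $\alpha(n) = O(n^{3/2} q(n)^{3/2})$ steps from $n$ to $n'$, and using that $\alpha(n) = o(n)$ (since $q \ll n^{-2/3}$) so that $q(k) \sim q(n)$ and $k \sim n$ throughout the range, yields $|q(n') - q(n)| = o(q(n))$; hence $n' = (1+o(1))n$ and $q(n') = (1+o(1))q(n)$, giving $\alpha(n') = (1+o(1)) \alpha(n)$, which is much stronger than needed.
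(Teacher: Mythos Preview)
Your proof is correct and follows essentially the same route as the paper's: combine the two coupling lemmas with the concentration hypotheses at $n$ and $n'$, use $\alpha(n')\le 4\alpha(n)$, and read off the inequality between $a_n$ and $a_{n'}$. One minor simplification the paper uses, which you might find worth noting: there is no need to glue the two couplings into a triple coupling, because both chains of inequalities ultimately produce statements about the \emph{marginal} law of $s(n',q(n))$ --- namely $\Pr\bigl(s(n',q(n))\ge a_n+\alpha(n)/3\bigr)\ge 1/2-\varepsilon$ and $\Pr\bigl(s(n',q(n))\le a_{n'}+5\varepsilon\alpha(n)\bigr)\ge 1-2\varepsilon$ --- and these can be intersected directly since their probabilities sum to more than $1$.
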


\begin{proof}
Recall that $n'=n+\alpha(n)$, and note that $\alpha(n')\leq(2n)^{3/2}q(2n)^{3/2}\leq4\alpha(n)$, since $n' \le 2n$. Fix $n \geq n_0$, and let $[a_n,b_n]$ and $[a_{n'},b_{n'}]$ be the intervals given by our assumption, so
\begin{equation}\label{eq:|b_n-a_n|}
|b_n-a_n|\leq\varepsilon \alpha(n)\qquad\text{and}\qquad|b_{n'}-a_{n'}|\leq\varepsilon\alpha(n')\leq4\varepsilon\alpha(n).     
\end{equation}
and
\begin{equation}\label{eq:Prob(s(n,q(n))}
\Prob\big( s(n,q(n))\in[a_n,b_n] \big) \geq 1 - \varepsilon \qquad \text{and} \qquad \Prob\big( s(n',q(n'))\in[a_{n'},b_{n'}] \big) \geq 1 - \varepsilon. 
\end{equation}
Now, by \Cref{lem:coupling1}, we have 
\begin{align*}
\Prob\bigg( s(n',q(n))\geq s(n,q(n))+\frac{\alpha(n)}{3} \bigg) \geq \frac12. \end{align*}
Since $\Prob\big( s(n,q(n)) \ge a_n \big) \geq 1 - \varepsilon$, by~\eqref{eq:Prob(s(n,q(n))}, it follows that
\begin{align}\label{pp1}
\Prob\bigg(s(n',q(n))\geq a_{n} + \frac{\alpha(n)}{3} \bigg) \geq \frac12 - \varepsilon.
\end{align}
On the other hand, by \Cref{lem:coupling2}, 
\begin{align*}
\Prob\big(s(n',q(n))\leq s(n',q(n')) + \varepsilon\alpha(n) \big) \geq 1 - \varepsilon.
\end{align*}
Since, by~\eqref{eq:|b_n-a_n|} and~\eqref{eq:Prob(s(n,q(n))}, we have
$$s(n',q(n')) \le b_{n'} \le a_{n'} + 4\varepsilon\alpha(n)$$
with probability at least $1 - \varepsilon$, it follows that
\begin{align}\label{eq:12345}
\Prob\big(s(n',q(n))\leq a_{n'}+5\varepsilon\alpha(n)\big)\geq1-2\varepsilon.   
\end{align}

Now put \eqref{pp1} and \eqref{eq:12345} together.
Since $\varepsilon$ is small enough, these two events on $s(n',q(n))$ have non-empty intersection, i.e., 
\begin{align*}
\Prob\bigg( a_{n}+\frac{\alpha(n)}{3} \le s(n',q(n)) \leq a_{n'}+5\varepsilon\alpha(n) \bigg) > 0.
\end{align*}
Hence, deterministically,
\begin{align*}
a_{n'}-a_n\geq (1/3-5\varepsilon)\alpha(n)=(1/3-5\varepsilon)(n'-n),
\end{align*}
as required.
\end{proof}

Now we are ready to deduce \Cref{main2'}, assuming Lemmas~\ref{lem:coupling1} and~\ref{lem:coupling2}. 

\begin{proof}[Proof of \Cref{main2'}]
Suppose, for a contradiction, that for every $\varepsilon > 0$, there exists $n_0 = n_0(\varepsilon)$ such that for every $n \ge n_0$, there exists an interval $[a_n,b_n]$ of length 
\begin{equation}\label{eq:lengthanbn}
|b_n-a_n| \leq \varepsilon \alpha(n)   
\end{equation}
such that 
\begin{equation}\label{eq:ambm}
\Prob\big( s(n,q(n)) \in [a_n,b_n] \big)\geq 1 - \varepsilon.
\end{equation}
Define a sequence $\{n_k\}_{k \ge 0}$, starting with $n_0$, by setting
$$n_{k} = n_{k-1} + \alpha(n_{k-1})$$
for each $k\geq 1$, and let 
$$K = \max\big\{ k \in \N : n_k \leq 2n_0 \big\}.$$
Note that 
\begin{align}\label{eq:n_K}
n_K \geq 2n_0- \alpha(2n_0) \geq (2 - \varepsilon) n_0, 
\end{align}
since $\alpha(n) = o(n)$ and $n_0$ was chosen sufficiently large. Moreover, 
$$ \E\big[s(n_K,q(n_K))\big]\leq (2n_0)^3q(2n_0)^3 \ll n_0,$$
since $s(n,q)$ is at most the number of triangles in $G(n,q)$, and $q(n) \ll n^{-2/3}$. By Markov's inequality, it follows that 
\begin{equation}\label{eq:S1}
\Prob\big( s(n_K,q(n_K)) \ge n_0/4 \big) \le \varepsilon.
\end{equation}
On the other hand, by \Cref{lem:s(n)ands(n')} we have
\begin{align*}
a_{n_K} \geq a_{n_0} + (1/3-5\varepsilon) \sum_{k=1}^K (n_k-n_{k-1})\geq(1/3-5\varepsilon)(n_K-n_0). 
\end{align*}
By \eqref{eq:n_K}, and assuming (as we may) that $\varepsilon$ is sufficiently small, it follows that
\begin{align*}
a_{n_K}\geq(1/3-5\varepsilon)(1 - \varepsilon) n_0 \geq n_0/4.    
\end{align*}
and hence, by \eqref{eq:ambm}, we deduce that  
\begin{equation}\label{eq:S2}
\Prob\big( s(n_K,q(n_K)) <  n_0/4 \big) \le \varepsilon,
\end{equation}
which contradicts~\eqref{eq:S1}, since $\varepsilon < 1/2$. This contradiction completes the proof. 
\end{proof}



We remain to prove these two coupling arguments \Cref{lem:coupling1} and \Cref{lem:coupling2}. We will show them respectively in the following two parts.

\subsection{Proof of \Cref{lem:coupling1}}\label{sec:pf:lem:coupling1}

In this subsection we will use the method of Heckel~\cite{H2} to construct a coupling of $G(n,q(n))$ and $G(n',q(n))$. To be precise, we will prove the following lemma, which implies \Cref{lem:coupling1}. Its proof is very similar to that of~\cite[Lemma 19]{HR}, but since it does not follow from the version stated there, we will give the details for completeness. 

\begin{lemma}\label{coupling1}
For every sufficiently large $n \in \N$, and every $0 < q_* \ll n^{-1/2}$, 
there exists a coupling of $G(n,q_*)$ and $G \sim G(n+3,q_*)$ such that
\begin{align}\label{S1}
\Prob\big( s(n+3,q_*)\geq s(n,q_*) + 1 \big)\geq 1 - \E\big[ K_3(G) \big]^{-1/2}.
\end{align}
\end{lemma}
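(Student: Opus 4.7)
My plan is to construct an explicit coupling of $G \sim G(n+3, q_*)$ with a graph $G_0 \sim G(n, q_*)$ obtained from $G$ by deleting a distinguished $3$-subset $R$ of vertices, with $R$ chosen so that, with probability at least $1 - \E[K_3(G)]^{-1/2}$, it is the vertex set of a triangle that lies in every maximum triangle-matching of $G$. On this high-probability event, the deletion deterministically decreases $s$ by exactly one, so that $s(G) = s(G_0) + 1$, as required.

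The key combinatorial notion will be that of a \emph{solo triangle} in $G$: a triangle $T$ such that every triangle of $G$ sharing a vertex with $T$ equals $T$ itself. The first observation is that any solo triangle must lie in every maximum triangle-matching, since its three vertices participate in no other triangle, and so would remain unmatched if $T$ were omitted --- contradicting maximality. Consequently removing the vertex set of a solo triangle decreases $s(G)$ by exactly one. My proposed coupling is: sample $G \sim G(n+3, q_*)$; if $G$ contains a solo triangle, let $R$ be the vertex set of a uniformly random such triangle, and otherwise let $R$ be a uniformly random $3$-subset of $V(G)$; then set $G_0 := G - R$ (relabeled to $[n]$). Since the rule defining $R$ given $G$ is equivariant under permutations of $V(G)$, and the law of $G(n+3, q_*)$ is permutation-invariant, the marginal distribution of $R$ is uniform on $\binom{[n+3]}{3}$, so $G_0 \sim G(n, q_*)$.

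The remaining task is to bound $\Prob(K_3^{\mathrm{solo}}(G) = 0)$, where $K_3^{\mathrm{solo}}(G)$ denotes the number of solo triangles. For the first moment, the probability that a given triangle fails to be solo is bounded by the expected number of other triangles sharing a vertex with it, which is $O(nq_*^2 + n^2 q_*^3) = o(1)$ in the regime of interest; hence $\E[K_3^{\mathrm{solo}}(G)] = (1-o(1))\E[K_3(G)]$. For the second moment, two distinct solo triangles must be vertex-disjoint (otherwise each would obstruct the other's solo-ness), so the covariance contribution from intersecting pairs is non-positive, while the positive covariance from disjoint pairs can be controlled via FKG by the expected number of triangles through both vertex triples. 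Chebyshev's inequality then yields
\[
\Prob\big(K_3^{\mathrm{solo}}(G) = 0\big) \leq \frac{\Var(K_3^{\mathrm{solo}})}{\E[K_3^{\mathrm{solo}}]^{2}} \leq \E[K_3(G)]^{-1/2},
\]
which is all that is required (and the conclusion of the lemma is trivial if $\E[K_3(G)] \le 4$).

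The main obstacles I anticipate are twofold. First, making the equivariance and marginal-preservation step fully rigorous requires careful symmetry bookkeeping (as in~\cite[Lemma~19]{HR}) to confirm that after relabeling $V(G) \setminus R$ to $[n]$ the induced law of $G_0$ is \emph{exactly} $G(n,q_*)$, not only an approximation. Second, as $q_*$ approaches the upper boundary $n^{-1/2}$ the expected number of other triangles through a given vertex is no longer $o(1)$, and solo triangles may become rare; the moment computation above will need to be refined (perhaps by replacing solo triangles with a more robust substitute, or by a two-round exposure argument) to cover the full range stated in the lemma. The regime $q_* \ll n^{-2/3}$, which is all that is needed for the application to Theorem~\ref{main2'}, is handled straightforwardly by the plan above.
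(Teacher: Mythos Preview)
Your proposal contains a genuine gap in the coupling step. You claim that because the rule selecting $R$ is permutation-equivariant, and therefore $R$ has uniform marginal distribution over $\binom{[n+3]}{3}$, the deleted graph $G_0 = G - R$ has law $G(n,q_*)$. This implication is false: a uniform marginal for $R$ is not the same as independence of $R$ from $G$, and it is independence that would be needed. For a simple counterexample, let $v$ be the vertex of maximum degree in $G(n,p)$ (ties broken uniformly). The choice of $v$ is equivariant and hence $v$ is uniform on $[n]$, yet $G - v$ is not distributed as $G(n-1,p)$: it has strictly fewer edges in expectation. In your setting the bias is just as real. Conditioning on $R = R_0$ re-weights graphs $H$ by $\Prob(R = R_0 \mid G = H)$, which equals $1/k$ when $R_0$ is one of $k$ solo triangles in $H$ and zero otherwise; this biases toward graphs with few solo triangles and, in particular, distorts the law of the induced subgraph on $[n+3]\setminus R_0$. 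The reference to~\cite[Lemma~19]{HR} does not help here: in that argument the deleted set is chosen \emph{independently} of the underlying random graph, which is precisely what makes the deletion law-preserving.

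The paper's proof avoids this trap by reversing the direction of the construction. It samples $G \sim G(n+3,q_*)$ and an \emph{independent} uniform triple $T$, and forms $Q = G \cup \{xy,xz,yz\}$ by planting a triangle on $T$. Because $T$ is independent of $G$, the induced subgraph $Q[V(G)\setminus T] = G[V(G)\setminus T]$ genuinely has law $G(n,q_*)$, and one has $s(Q) \ge s(G(n,q_*)) + 1$ deterministically. The price is that $Q$ is not exactly $G(n+3,q_*)$; the proof then bounds $d_{\textup{TV}}(Q, G(n+3,q_*))$ by a second-moment calculation on the triangle count, obtaining the $\E[K_3(G)]^{-1/2}$ error term. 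Your solo-triangle moment computation is in spirit the same variance calculation, but it is deployed at the wrong point: it should control the total-variation cost of planting, not the probability of a structural event inside $G$. Your second anticipated obstacle (the regime $q_*$ near $n^{-1/2}$) is handled automatically by the planting approach, since no solo-triangle hypothesis is required.
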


\begin{proof}
The idea is to define a new random graph, $Q$, by adding a random triangle to the random graph $G \sim G(n+3,q_*)$. To be precise, 
let $T = \{x,y,z\}$ be a uniformly random subset of $V(G)$ of size 3, and define $Q$ to be the graph with vertex set $V(G)$ and edge set
$$E(Q) = E(G) \cup \big\{ xy,xz,yz \big\}.$$
That is, $Q$ is obtained from $G \sim G(n+3,q_*)$ by adding the three edges between the vertices of the (random) set $T$. Observe that, given $Q$ and $T$, the induced subgraph $Q[V(G) \setminus T]$ has the same distribution as $G(n,q_*)$; we may therefore couple $G(n,q_*)$ and $Q$ by making them equal on $V(G) \setminus T$. Since under this coupling we have (deterministically) 
\begin{align}\label{eq:sqsqt}
s(Q)\geq s\big( Q[ V(G) \setminus T ] \big) + 1 = s(n,q_*) + 1, 
\end{align}
to complete the proof it will suffice to prove the following bound on the total variation distance between $Q$ and $G(n+3,q_*)$. 



\begin{claim}
$$d_{\textup{TV}}\big( G(n+3,q_*), Q\big)\leq \E\big[ K_3(G) \big]^{-1/2}.$$ 
\end{claim}

\begin{proof} 
Let $G \sim G(n+3,q_*)$, 
and note that 
\begin{align*} 
d_{\textup{TV}}\big( G,Q \big) = \frac{1}{2} \sum_H \big| \Prob(H) - \Prob_Q(H) \big|,
\end{align*}
where the sum is over all (labelled) graphs $H$ with vertex set $V(G)$, and $\Prob(H)$ and $\Prob_Q(H)$ denote the probabilities of the events $G = H$ and $Q = H$, respectively. It will therefore suffice to bound the difference $| \Prob(H) - \Prob_Q(H)|$ for each such graph $H$. 


To do so, note first that
$$\Prob(H)= q_*^{e(H)} \big( 1 - q_* \big)^{\binom{n+3}{2}-e(H)}.$$
To calculate $\Prob_Q(H)$, observe that we must first choose $T$ to be the vertex set of a triangle in $H$, and then choose the remaining $e(H) - 3$ edges correctly. Thus, writing $K_3(H)$ for the number of triangles in $H$, 
we have 
\begin{align}\label{eq:pqh}
\Prob_{Q}(H)=\frac{K_3(H)}{\binom{n+3}{3}}\cdot q_*^{e(H)-3} \big( 1 - q_* \big)^{\binom{n+3}{2}-e(H)}=\frac{K_3(H)}{\E[K_3(G)]}\cdot\Prob(H),
\end{align}
where 
the second equality holds since $\E[K_3(G)]=\binom{n+3}{3}q_*^{3}$. We therefore obtain
\begin{align*} 
d_{\textup{TV}}\big( G,Q \big) 
= \frac{1}{2} \sum_H\frac{|K_3(H)-\E[K_3(G)]|}{\E[K_3(G)]} \cdot\Prob(H).
\end{align*}
Next, observe that
\begin{align*}
\sum_H \big| K_3(H)-\E[K_3(H)] \big|\cdot\Prob(H)=\E\big[ | K_3(G) - \E[ K_3(G) ] | \big] \le \Var\big( K_3(G) \big)^{1/2},    
\end{align*}
since $\E[X^2] \ge \E[X]^2$ for every random variable $X$. Moreover, since $q_* \ll n^{-1/2}$, we have
$$\Var\big( K_3(G) \big) \le \E\big[ K_3(G) \big] + n^4q_*^5 \le 2 \cdot \E\big[ K_3(G) \big].$$
It therefore 
follows that
\begin{equation*}
d_{\textup{TV}}\big( G,Q \big) \leq \E\big[ K_3(G) \big]^{-1/2} 
\end{equation*}
as claimed. 
\end{proof}
 
By~\eqref{eq:sqsqt} and the claim, it follows that
\begin{align*}
\Prob\big(s(n+3,q_*)<s(n,q_*)+1\big)\leq d_{\textup{TV}}\big( G,Q \big) \leq \E\big[ K_3(G) \big]^{-1/2} 
\end{align*}     
as required. 
\end{proof}

We can now easily deduce \Cref{lem:coupling1}.

\begin{proof}[Proof of \Cref{lem:coupling1}]
Observe first that if $n \le m \le n'$ and $q = q(n)$, then $0 < q \ll m^{-1/2}$ and 
$$\E\big[ K_3\big( G(m+3,q) \big) \big] \ge 4\alpha(n)^2,$$
by our choice of $\alpha(n)$. By Lemma~\ref{coupling1}, and since $n$ is sufficiently large, it follows that there exists a coupling of $G(m,q)$ and $G(m+3,q)$ such that
\begin{align}\label{S1}
\Prob\big( s(m+3,q)\geq s(m,q) + 1 \big)\geq 1 -\frac{1}{2\alpha(n)}.
\end{align}
Since this holds for every $n \le m \le n'$, and recalling that $n' = n + \alpha(n)$, we obtain a coupling of $G(n,q)$ and $G(n',q)$ such that 
\begin{align*}
\Prob\left(s(n',q) \ge  s(n,q)+\frac{\alpha(n)}{3}\right) \ge 1 - \frac{\alpha(n)}{2\alpha(n)} = \frac{1}{2},
\end{align*}
as required.
\end{proof}


\subsection{Proof of \Cref{lem:coupling2}}\label{sec:pf:lem:coupling2}

To complete the proof of \Cref{main2'}, it remains to construct a coupling of $G(n',q(n))$ and $G(n',q(n'))$ satisfying~\eqref{eq:coupling2}. 
The idea is to use sprinkling, and the main step in the proof is the following lemma.

\begin{lemma}\label{lem:DeltaH}
Let $H$ be a graph on $n$ vertices with $\Delta(H) \le d$, let $0 < x \ll d^{-1/2} n^{-1}$, and let $G = H \cup G(n,x)$. Then
$$|\cK_3(G) \setminus \cK_3(H)| \le \eps^3 d^{3/2}$$ 
with high probability.
\end{lemma}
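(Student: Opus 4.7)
The plan is to partition $\cK_3(G) \setminus \cK_3(H)$ according to the number $i \in \{1,2,3\}$ of edges of the triangle that come from $G(n,x) \setminus E(H)$ (as opposed to from $H$), bound the expected size of each part, and then finish with Markov's inequality. For $i \in \{1,2,3\}$ let $Y_i$ be the number of triangles in $G$ with exactly $i$ edges outside $H$, so that
\begin{align*}
|\cK_3(G) \setminus \cK_3(H)| = Y_1 + Y_2 + Y_3.
\end{align*}

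For $Y_1$ --- which will turn out to be the dominant term --- each triangle counted is obtained from a cherry (i.e.\ a path of length two) in $H$ by adding the closing edge from $G(n,x)$. The number of cherries in $H$ is at most $\sum_{v}\binom{d_H(v)}{2}\le \tfrac12 nd^2$ by the assumption $\Delta(H)\le d$, and each closing edge appears in $G(n,x)$ with probability at most $x$, so $\E[Y_1]\le \tfrac12 nd^2 x$. For $Y_2$, pick an edge of $H$ (at most $nd/2$ choices) and a third vertex whose two connecting edges are both in $G(n,x)$, giving $\E[Y_2]\le \tfrac12 n^2dx^2$. Trivially $\E[Y_3]\le n^3 x^3$.

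Plugging in the hypothesis $x\ll d^{-1/2}n^{-1}$, the three bounds become
\begin{align*}
nd^2 x \ll d^{3/2}, \qquad n^2 d x^2 \ll 1, \qquad n^3 x^3 \ll d^{-3/2},
\end{align*}
so that $\E\big[ |\cK_3(G)\setminus \cK_3(H)| \big] = o(d^{3/2})$. Markov's inequality then gives
\begin{align*}
\Pr\Big( |\cK_3(G)\setminus \cK_3(H)| \ge \eps^3 d^{3/2} \Big) \le \frac{\E\big[ |\cK_3(G)\setminus \cK_3(H)| \big]}{\eps^3 d^{3/2}} = o(1),
\end{align*}
as required.

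The argument is entirely a first-moment computation; the only place where the degree hypothesis is used in an essential way is in the cherry count that controls $\E[Y_1]$, and this is exactly the term that saturates the target $d^{3/2}$. No concentration inequality beyond Markov is needed, because the assumption $x\ll d^{-1/2}n^{-1}$ is precisely calibrated to make the expectation of the added-triangle count $o(d^{3/2})$, comfortably below $\eps^3 d^{3/2}$ for any fixed $\eps>0$.
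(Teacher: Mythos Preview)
Your proof is correct and follows essentially the same approach as the paper: both split the new triangles according to how many of their edges lie outside $H$ (equivalently, how many lie in $H$), bound each expected count using $\Delta(H)\le d$ and the hypothesis $x\ll d^{-1/2}n^{-1}$, and finish with Markov's inequality. Your bounds $\E[Y_1]\le \tfrac12 nd^2x$, $\E[Y_2]\le \tfrac12 n^2dx^2$, $\E[Y_3]\le n^3x^3$ coincide with the paper's three cases up to constants, and your observation that the cherry term $Y_1$ is the one saturating the threshold is accurate.
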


\begin{proof}
We will bound separately the expected number of triangles using zero, one and two edges of $H$. First, the expected number of triangles using zero edges of $H$ is at most
$${n \choose 3} x^3 \ll d^{3/2}$$
since $x \ll d^{1/2} n^{-1}$. Second, the expected number of triangles using one edge of $H$ is at most
$$e(H) \cdot x^2n \le \Delta(H) \cdot x^2 n^2 \le d x^2 n^2 \ll d^{3/2}$$
since $x \ll d^{1/4} n^{-1}$. Finally, the expected number of triangles using two edges of $H$ is at most
$$n \Delta(H)^2 x \le d^2 x n \ll d^{3/2}$$
since $x \ll d^{-1/2} n^{-1}$. By Markov's inequality, it follows that
$$\Prob\big( |\cK_3(G) \setminus \cK_3(H)| \ge \eps^3 d^{3/2} \big) \to 0$$
as $n \to \infty$, as required.  
\end{proof}

Now let's go back to $G(n',q(n))$ and $G(n',q(n'))$. Recall from Definition~\ref{def:smoothfunction} that, since $q(n)$ is smooth and $n'=n + \alpha(n)$, we have
\begin{align}\label{eq:qn-qn'}
|q(n) - q(n')| = o\bigg(\frac{\alpha(n)}{q(n)^2n^3}\bigg) = o(n^{-3/2} q(n)^{-1/2}).   
\end{align}
In particular, since $q(n) \gg n^{-1} \log n$, we have 
\begin{align}\label{eq:qn-qn':cor}
q(n') = \big( 1 + o(1) \big) q(n).
\end{align}
We can therefore bound the typical maximum degree of $G(n',q(n'))$ by $2nq(n)$. 

\begin{observation}\label{obs:dGnq}
With high probability, 
\begin{align}\label{eq:dgnq}
\Delta\big( G(n',q(n')) \big) \le 2nq(n).  
\end{align}
\end{observation}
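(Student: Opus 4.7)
The plan is to apply a standard Chernoff bound plus union bound argument. For each vertex $v \in V(G(n',q(n')))$, its degree $d(v)$ is a sum of $n' - 1$ independent Bernoulli$(q(n'))$ random variables, and therefore
\[
\E[d(v)] = (n' - 1) q(n') = \big( 1 + o(1) \big) n q(n),
\]
since $n' = n + \alpha(n) = (1+o(1))n$ (as $\alpha(n) = O(n^{3/2} q(n)^{3/2}) = o(n)$) and $q(n') = (1+o(1)) q(n)$ by~\eqref{eq:qn-qn':cor}. In particular, for $n$ sufficiently large we have $\E[d(v)] \le \frac{3}{2} n q(n)$, so that the event $d(v) > 2 n q(n)$ implies a deviation of at least $\frac{1}{3} \cdot \E[d(v)]$.

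I would then apply Chernoff's inequality in its standard multiplicative form: for any single vertex $v$,
\[
\Prob\big( d(v) > 2 n q(n) \big) \le \exp\bigl( - c \cdot n q(n) \bigr)
\]
for some absolute constant $c > 0$. Since $q(n) \gg n^{-1} \log n$ by hypothesis, we have $n q(n) \gg \log n$, so the right-hand side is $n^{-\omega(1)}$.

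Finally, I would take a union bound over the $n' \le 2n$ vertices of $G(n', q(n'))$ to conclude
\[
\Prob\big( \Delta(G(n', q(n'))) > 2 n q(n) \big) \le n' \cdot \exp\bigl( - c \cdot n q(n) \bigr) = n^{-\omega(1)},
\]
which yields~\eqref{eq:dgnq} with high probability. There is essentially no obstacle here: the observation is a direct consequence of the fact that $q(n)$ is well above the connectivity-type threshold $n^{-1} \log n$, which makes the Chernoff tail strong enough to survive a union bound, together with the smoothness estimate~\eqref{eq:qn-qn':cor} that keeps $q(n')$ comparable to $q(n)$.
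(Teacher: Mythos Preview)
Your proposal is correct and matches the paper's proof essentially line for line: both argue that each vertex degree has mean $(1+o(1))nq(n)$ by~\eqref{eq:qn-qn':cor}, apply Chernoff to get a tail bound of $e^{-\Omega(nq)}$ (which beats any polynomial since $nq \gg \log n$), and then take a union bound over the $n'$ vertices.
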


\begin{proof}
By Chernoff's inequality and~\eqref{eq:qn-qn':cor}, each vertex of $G(n',q(n'))$ has degree greater than $2nq(n)$ with probability at most $e^{-\Omega(nq)} \le n^{-2}$, since $q(n)\gg n^{-1}\log n$. The observation now follows by the union bound.  
\end{proof}

We are now ready to prove \Cref{lem:coupling2}.  

\begin{proof}[Proof of \Cref{lem:coupling2}]
Observe first that if $q(n) \le q(n')$, then there is nothing to prove. We may therefore assume that $q(n) \ge q(n')$. Define 
\begin{align*}
x = \frac{q(n)-q(n')}{1-q(n')}. 
\end{align*}
and observe that
\begin{align}\label{eq:union}
G(n',q(n)) \sim G(n',q(n'))\cup G(n',x),
\end{align}
and that, by~\eqref{eq:qn-qn'},
\begin{align*}
x \ll n^{-3/2} q(n)^{-1/2} = \Theta\big(  d^{-1/2}n^{-1} \big),    
\end{align*}
where $d = 2nq(n)$. We couple $G(n',q(n))$ and $G(n',q(n'))$ using~\eqref{eq:union}; that is, we first reveal $G(n',q(n'))$, and then sprinkle a copy of $G(n',x)$ to form $G(n',q(n))$. 

Now, by \Cref{obs:dGnq}, the random graph $H \sim G(n',q(n'))$ satisfies $\Delta(H) \le d$ with high probability. Moreover, given $H$, by Lemma~\ref{lem:DeltaH} we have
$$s(G) - s(H) \le |\cK_3(G) \setminus \cK_3(H)| \le \eps^3 d^{3/2}$$ 
with high probability, where $G \sim G(n',q(n))$, since we can form a triangle matching in $H$ from a triangle matching in $G$ by deleting the triangles that are not in $H$. We have therefore constructed a coupling of $G$ and $H$ such that
\begin{align*}
|s(n',q(n))-s(n',q(n'))| \leq \eps^3 \big( 2 n q(n) \big)^{3/2} \leq \eps\alpha(n),  
\end{align*}
with high probability since $\varepsilon$ is sufficiently small, as required.
\end{proof}

\medskip

\section{Further Discussions}\label{section7}

It is natural to ask whether our main results can be extended to random graphs that are somewhat less dense. In particular, can our approach be adapted to prove the conjecture of Surya and Warnke~\cite{SW}, that if
$$n^{-2/(r-1)} (\log n)^{1/{r-1 \choose 2}} \ll 1 - p \ll n^{-2/r}$$
then $\chi(G(n,p))$ is concentrated on an interval of size $O(\sqrt{\mu_r})$, or to show that $\chi(G(n,p))$ is not concentrated on any interval of size $o(\sqrt{\mu_r})$ for infinitely many values of $n \in \N$?


The main difficulty we expect to face in extending Theorems~\ref{main1} and~\ref{main2} to larger values of $r$ is in proving a structural theorem like~\Cref{structure}. We believe that such a result should be true, and make the following conjecture. For each $r \in \N$, define $S_r(G)$ to be the set of vertices of the largest $K_r$-matching in $G$ (as before, choosing according to some arbitrary deterministic rule if there is more than one). 

\begin{conjecture}\label{conj:structure}
Fix $r \geq 4$, and let $G \sim G(n,q)$, where
$$n^{-2/(r-1)} (\log n)^{1/{r-1 \choose 2}} \ll q = q(n) \ll n^{-2/r}.$$ 
With high probability as $n \to \infty$, the graph $G - S_r(G)$ contains a $K_{r-1}$-matching that covers all but $o(\sqrt{\mu_r})$ vertices of $V(G) \setminus S_r(G)$. 
\end{conjecture}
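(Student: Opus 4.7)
The plan is to follow the two-step strategy used to prove \Cref{structure}. First, I would establish a structural bound analogous to \Cref{lem:DT:unlikely}: with high probability, $|N(T) \cap S_r(G)| \leq \delta |N(T)|$ for every $T \subset V(G)$ with $|T| \leq n/2$. Second, use this bound together with the Johansson--Kahn--Vu theorem to extract a $K_{r-1}$-matching in $G - S_r(G)$ covering all but $o(\sqrt{\mu_r})$ vertices of $V(G) \setminus S_r(G)$.

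For the first step, the proof of \Cref{lem:DT:unlikely} should generalise with $K_r$ replacing $K_3$ throughout. I would enlarge the pseudorandom event $\cR$ to include the natural bounds on common neighbourhoods $|N(x_1) \cap \cdots \cap N(x_k)|$ for $2 \leq k \leq r-1$, on the number $X_s$ of copies of $K_s$ in $G$ for $2 \leq s \leq r$, and on the sizes $|N(T)|$ for small and medium $T$. The analog $\Lambda_1(T)$ of the set of vertices of $N(T)$ contained in a $K_r$ meeting $T$ is bounded using the $K_{r-1}$-count at each vertex of $T$, while $\Lambda_2(T)$ (vertices of $N(T)$ in some $K_r$ disjoint from $T$) is bounded by applying the deletion method of Janson, R\"odl and Ruci\'nski~\cite{JR,RR} to the family of $K_r$'s in $K_n$ incident to $N(T)$ and disjoint from $T$; common-neighbourhood bounds from $\cR$ then control the parameter $Y^*$ just as in the proof of \Cref{lem:ZTA:and:R}. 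Large sets (with $|T| \geq 1/q$) remain easy, since $\cR$ gives $|N(T)| = \Omega(n)$ while $|S_r(G)| \leq r X_r = O\big( n^r q^{\binom{r}{2}} \big) = o(n)$ throughout the range.

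The crux, and the main obstacle, is the second step. Since $q \gg n^{-2/(r-1)}(\log n)^{1/\binom{r-1}{2}}$, the theorem of Johansson, Kahn and Vu~\cite{JKV} gives a $K_{r-1}$-factor $\mathcal{F}$ of $G \sim G(n,q)$ with high probability, and deleting the $K_{r-1}$'s of $\mathcal{F}$ that meet $S_r(G)$ yields a $K_{r-1}$-matching in $G - S_r(G)$ covering all but $O(|S_r(G)|) = O(\mu_r)$ vertices of $V(G) \setminus S_r(G)$. The hard part will be to tighten this leftover from $\Theta(\mu_r)$ down to $o(\sqrt{\mu_r})$, since typically $|S_r(G)| = \Theta(\mu_r) \gg \sqrt{\mu_r}$. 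A natural strategy is an augmenting or switching procedure: for each broken $K_{r-1} \in \mathcal{F}$ with a bad vertex $s \in S_r(G)$ and core $\{v_1,\dots,v_{r-2}\} \subset V(G) \setminus S_r(G)$, one tries to replace $s$ by an uncovered vertex $u \in (V(G)\setminus S_r(G)) \cap \bigcap_i N(v_i)$. Near the lower end of our range, however, an $(r-2)$-tuple has only $nq^{r-2} = n^{(3-r)/(r-1)+o(1)}$ common neighbours in expectation, which is much less than $1$ for $r \geq 4$, so single swaps are generically unavailable and one will need either longer alternating chains of swaps or an absorbing structure planted in advance. A further technical difficulty is that $S_r(G)$ is itself a complicated function of the whole graph $G$, so any argument must be robust to the dependencies introduced by conditioning on its structure; making this robustness quantitative at the JKV threshold appears to be the central challenge.
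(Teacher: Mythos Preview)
This statement is labelled a \emph{conjecture} in the paper, and the paper does not prove it. There is therefore no proof to compare your proposal against. More than that, immediately after stating the conjecture the authors write that ``it does not seem possible to extend the proof of \Cref{structure} to larger values of $r$'', so your plan to follow the two-step strategy of \Cref{structure} is precisely the route the paper itself flags as blocked.

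Your own analysis locates the obstruction correctly, and it is worth being explicit about why the $r=3$ argument does not transfer. In \Cref{section4} the second step is \emph{not} a probabilistic factor argument of JKV type: it is Hall's theorem applied to a random bipartition of $G-S$, and once $G\in\cG$ the near-perfect matching is produced \emph{deterministically}. The entire point is that Hall's condition can be verified set by set using only the pseudorandomness encoded in $\cR$ together with the bound $|N(T)\cap S|\le\delta|N(T)|$. For $r\ge 4$ there is no clean Hall-type criterion for $K_{r-1}$-factors, and the natural substitute --- the Johansson--Kahn--Vu theorem --- applies to $G(n,q)$ itself, not to the graph $G-S_r(G)$ obtained after conditioning on a globally defined set. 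So even the ``baseline'' step of your plan (a $K_{r-1}$-factor in $G$, then delete cliques meeting $S_r$) already introduces exactly the dependency problem you note, and only yields $O(\mu_r)$ leftover vertices rather than $o(\sqrt{\mu_r})$.

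Your first step, the $K_r$-analogue of \Cref{lem:DT:unlikely}, looks plausible and the generalisation you sketch (enlarging $\cR$, replacing triangles by $K_r$'s, running the deletion method on $K_r$'s incident to $N(T)$) is the natural one. But as you say, this is not where the difficulty lies: Step~1 feeds into Hall's theorem in the $r=3$ proof, and it is the absence of a comparable deterministic tool for $K_{r-1}$-factors that leaves the conjecture open. Your proposed fixes --- long augmenting chains or a pre-planted absorber robust to conditioning on $S_r(G)$ --- are reasonable directions, but they go well beyond anything in the paper, and you have not indicated how to make either one work at the JKV threshold. In short: your proposal accurately diagnoses the gap, but does not close it, and the paper offers no further guidance.
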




Note that~\Cref{structure} implies the case $r = 3$ of Conjecture~\ref{conj:structure}, since $\mu_3 \gg 1$ when $q \gg n^{-1}$. While it does not seem possible to extend the proof of~\Cref{structure} to larger values of $r$, we do believe that our results on the concentration of the size of the largest triangle-matching, Theorems~\ref{main1'} and~\ref{main2'}, can be generalized to $K_r$-matchings for all $r \geq 4$ using the methods of Sections~\ref{section5} and~\ref{section6}. We plan to return to this question in future work.

Another natural question is whether~\Cref{structure} can be used to prove non-concentration for $\chi(G(n,p))$ for \emph{every} $n \in \N$ in the range $n^{-1} \log n \ll 1 - p \ll n^{-2/3}$. 
Indeed, it seems plausible to us that~\Cref{thm:main*}, our central limit theorem for triangle-matchings, could be extended to the entire range via an inclusion-exclusion argument, by applying  Ruci\'nski's theorem to control the number of copies of each graph that is the union of a bounded number of triangles. However, making such an argument precise appears to require new ideas. 


Finally, it would also be interesting to understand the concentration of $\chi(G(n,p))$ in the range $n^{-1} \ll 1 - p \ll n^{-1} \log n$, which is not covered by our results or those of~\cite{SW}. Here we expect the typical deviation of $\chi(G(n,p))$ to be dominated by the  fluctuations of the number of isolated vertices in $G(n,q)$, which suggests the following conjecture.


\begin{conjecture}\label{conj:log}
If $n^{-1} \ll 1 - p \ll n^{-1} \log n$, then the typical deviation of $\chi(G(n,p))$ from its expectation is of order $\Theta\big( \sqrt{n} e^{-nq/2} \big)$, where $q = 1 - p$.
\end{conjecture}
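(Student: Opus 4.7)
The plan is to extend the structural approach of Sections~\ref{section3}--\ref{section5} to this very sparse regime, where the fluctuations of $\chi(G(n,p))$ come not from the triangle-matching in the complement but from the number of universal vertices. A vertex which is universal in $G(n,p)$ (equivalently, isolated in $G = G(n,q)$) is adjacent to every other vertex and therefore must receive its own colour, used nowhere else. Writing $X_0$ for the number of isolated vertices of $G$ and $U$ for the set of universal vertices of $G(n,p)$, this yields the deterministic identity $\chi(G(n,p)) = X_0 + \chi(G(n,p) - U)$. Since $q \ll n^{-1}\log n \ll n^{-2/3}$ implies that $G$ is $K_4$-free with high probability, any optimal colouring of $G(n,p)$ corresponds to a clique cover of $G$ using only singletons, edges, and triangles, and the goal is to establish that, with high probability,
\begin{equation*}
\chi(G(n,p)) \;=\; \bigg\lceil \frac{n + X_0 - s(G)}{2} \bigg\rceil + O(1).
\end{equation*}

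The first main step is an analogue of \Cref{structure} in this range: after removing $U$ and the vertices of the largest triangle-matching from $G$, the remaining graph has a near-perfect matching. The high-level strategy from Sections~\ref{section3}--\ref{section4} (a Hall-type argument on a random equi-partition, combined with the deletion method) remains natural, but the pseudorandomness event $\cR$ defined via~\eqref{omega0} requires $nq \geq \omega_0^3 \log n$, and the small-set step in the proof of \Cref{p1} uses $|N_{G'}(T)| \gg |T|\log n$, both of which fail here. To work around this, one would strip off the atypical local structure explicitly at the outset (isolated vertices, short paths, short cycles), apply sharp matching results for sparse random graphs (from Erd\H{o}s--R\'enyi matching theory and its refinements, e.g.\ by Bollob\'as and Frieze) to the remaining giant-component-like bulk where the average degree still tends to infinity, and then account separately for the finite list of atypical small components. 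A $P_3$ component, for example, costs two cliques rather than the $3/2$ charged by a perfect matching, but the expected number of size-$k$ tree components is $O((nq)^{k-1} \cdot n e^{-knq})$, which is $o(n e^{-nq})$ whenever $nq \to \infty$; hence the aggregate fluctuations in the corresponding excess cost are $o(\sqrt{n}\, e^{-nq/2})$ and can be absorbed in the $O(1)$ error.

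With the structural identity in hand, the analysis splits into controlling $X_0$ and $s(G)$ separately. A direct second-moment computation gives $\E[X_0] = n(1-q)^{n-1} \sim n e^{-nq}$ and $\Var(X_0) = \Theta(n e^{-nq})$, and classical theorems yield a CLT for $X_0$ when $ne^{-nq} \to \infty$ (and a Poisson limit when $ne^{-nq} = \Theta(1)$); in either case the typical deviation of $X_0$ is exactly $\Theta(\sqrt{n}\, e^{-nq/2})$, and $X_0$ is not concentrated on any interval of length $o(\sqrt{n}\, e^{-nq/2})$. On the other hand, $s(G)$ is dominated by $X_3$, the total number of triangles in $G$, and $\Var(X_3) = O(n^3 q^3 + n^4 q^5) = O(n^3 q^3)$ in our range. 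A short calculation using $nq \ll \log n$ gives $(nq)^3 \ll e^{nq}$, hence $n^3 q^3 \ll n e^{-nq}$, so the fluctuations of $s(G)$ are negligible on the scale $\sqrt{n}\, e^{-nq/2}$. Combining these estimates with the structural identity, and checking that $X_0$ and $s(G)$ are approximately independent (since isolated vertices cannot lie in triangles), produces both the upper bound on the concentration and the matching non-concentration.

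The main obstacle I expect is the structural step: the delicate arguments of Sections~\ref{section3}--\ref{section4} are built around the density hypothesis $nq \gg \log n$ and do not transfer verbatim, so a genuinely new ingredient is needed to produce the bulk matching in the new range. A secondary technical issue is the treatment of the short atypical components when $nq$ grows only slowly; here the variance calculation should be carried out via a Poisson-like approximation in the spirit of \Cref{lem:y(G)} to confirm that the combined fluctuations are $o(\sqrt{n}\, e^{-nq/2})$. Once these pieces are in place, \Cref{conj:log} would follow, completing the description of the Surya--Warnke phenomenon across the full range $n^{-1} \ll 1 - p \ll n^{-2/3}$.
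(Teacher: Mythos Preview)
The statement you are attempting to prove is \Cref{conj:log}, which the paper states explicitly as a \emph{conjecture} in Section~\ref{section7} (Further Discussions). The paper provides no proof; it only offers the heuristic that in this range the deviation of $\chi(G(n,p))$ should be dominated by fluctuations in the number of isolated vertices of $G(n,q)$. There is therefore no ``paper's own proof'' against which to compare your proposal: you are sketching an attack on an open problem.

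As a plan of attack, your outline is reasonable and aligns with the heuristic the authors give. A couple of remarks. First, the inequality you invoke, ``$(nq)^3 \ll e^{nq}$, hence $n^3q^3 \ll n e^{-nq}$'', is not the right implication: what you actually need is $(nq)^3 e^{nq} \ll n$, which does hold throughout $1 \ll nq \ll \log n$ (since then $(nq)^3 e^{nq} = n^{o(1)}$), so the conclusion survives but the stated reasoning should be corrected. Second, and more substantively, you correctly flag the structural step as the real obstacle: the arguments of Sections~\ref{section3}--\ref{section4} rely essentially on $nq \gg \log n$ (both in the pseudorandomness event $\cR$ via~\eqref{omega0} and in the small-set case of \Cref{p1}), and replacing them with matching theory for sparse random graphs plus an explicit accounting of small components is a genuine new ingredient, not a routine adaptation. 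Your variance estimate for the small-component corrections is heuristic at the level written; turning it into a proof that their aggregate fluctuation is $o(\sqrt{n}\,e^{-nq/2})$ would require care, particularly near the top of the range where $ne^{-nq}$ is only polylogarithmic. In short: your proposal is a credible research plan, but it is not a proof, and the paper does not claim one either.
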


Similarly, it seems plausible that for every $r \ge 2$, the concentration of $\chi(G(n,p))$ in the range $n^{-2/r} \ll q = 1 - p \ll n^{-2/r} (\log n)^{1/{r-1 \choose 2}}$ is controlled by the typical deviation of the number of vertices of $G(n,q)$ that are not contained in any copy of $K_r$.


\section*{Acknowledgements}
I am very grateful to my supervisor Rob Morris for many helpful discussions on the proofs and presentation of this paper.

\medskip
\pagebreak

\bibliography{bibliography}

\begin{thebibliography}{COPS08}

\bibitem[AK97]{AK}
N.~Alon and M.~Krivelevich.
\newblock The concentration of the chromatic number of random graphs.
\newblock {\em Combinatorica}, 17(3):303--313, 1997.

\bibitem[AN05]{AN}
D.~Achlioptas and A.~Naor.
\newblock The two possible values of the chromatic number of a random graph.
\newblock {\em Ann. of Math. (2)}, 162(3):1335--1351, 2005.

\bibitem[AS16]{Probabilisticmethod}
N.~Alon and J.~Spencer.
\newblock {\em The probabilistic method}.
\newblock John Wiley \& Sons, 2016.

\bibitem[Bol88]{B}
B.~Bollob\'{a}s.
\newblock The chromatic number of random graphs.
\newblock {\em Combinatorica}, 8(1):49--55, 1988.

\bibitem[COPS08]{CPS}
A.~Coja-Oghlan, K.~Panagiotou, and A.~Steger.
\newblock On the chromatic number of random graphs.
\newblock {\em J. Combin. Theory Ser. B}, 98(5):980--993, 2008.

\bibitem[Fre75]{F}
D.~Freedman.
\newblock On tail probabilities for martingales.
\newblock {\em Ann. Probability}, 3:100--118, 1975.

\bibitem[GKSS24]{GKSS}
M.~Glasgow, M.~Kwan, A.~Sah, and M.~Sawhney.
\newblock A central limit theorem for the matching number of a sparse random
  graph, 2024.

\bibitem[Hal35]{Hall}
P.~Hall.
\newblock On {R}epresentatives of {S}ubsets.
\newblock {\em J. London Math. Soc.}, 10(1):26--30, 1935.

\bibitem[Hec21]{H2}
A.~Heckel.
\newblock Non-concentration of the chromatic number of a random graph.
\newblock {\em J. Amer. Math. Soc.}, 34(1):245--260, 2021.

\bibitem[HMS22]{HMS}
M.~Harel, F.~Mousset, and W.~Samotij.
\newblock Upper tails via high moments and entropic stability.
\newblock {\em Duke Math. J.}, 171(10):2089--2192, 2022.

\bibitem[HR23]{HR}
A.~Heckel and O.~Riordan.
\newblock How does the chromatic number of a random graph vary?
\newblock {\em J. Lond. Math. Soc. (2)}, 108(5):1769--1815, 2023.

\bibitem[JKV08]{JKV}
A.~Johansson, J.~Kahn, and V.~Vu.
\newblock Factors in random graphs.
\newblock {\em Random Structures \& Algorithms}, 33(1):1--28, 2008.

\bibitem[JOR04]{JOR}
S.~Janson, K.~Oleszkiewicz, and A.~Ruci\'{n}ski.
\newblock Upper tails for subgraph counts in random graphs.
\newblock {\em Israel J. Math.}, 142:61--92, 2004.

\bibitem[JR04]{JR}
S.~Janson and A.~Ruci{\'n}ski.
\newblock The deletion method for upper tail estimates.
\newblock {\em Combinatorica}, 24(4):615--640, 2004.

\bibitem[Kre17]{Kreacic}
E.~Kreacic.
\newblock Some problems related to the karp-sipser algorithm on random graphs.
\newblock {\em Ph.D. thesis}, 2017.

\bibitem[KV04]{KV}
J.~H. Kim and V.~H. Vu.
\newblock Divide and conquer martingales and the number of triangles in a
  random graph.
\newblock {\em Random Structures Algorithms}, 24(2):166--174, 2004.

\bibitem[Pit90]{Pittel}
B.~Pittel.
\newblock On tree census and the giant component in sparse random graphs.
\newblock {\em Random Structures Algorithms}, 1(3):311--342, 1990.

\bibitem[RR95]{RR}
V.~R{\"o}dl and A.~Ruci{\'n}ski.
\newblock Threshold functions for ramsey properties.
\newblock {\em J. Amer. Math. Soc.}, 8(4):917--942, 1995.

\bibitem[Ruc88]{R}
A.~Ruci\'{n}ski.
\newblock When are small subgraphs of a random graph normally distributed?
\newblock {\em Probab. Theory Related Fields}, 78(1):1--10, 1988.

\bibitem[SS87]{SS}
E.~Shamir and J.~Spencer.
\newblock Sharp concentration of the chromatic number on random graphs
  {$G_{n,p}$}.
\newblock {\em Combinatorica}, 7(1):121--129, 1987.

\bibitem[SW24]{SW}
E.~Surya and L.~Warnke.
\newblock On the {C}oncentration of the {C}hromatic {N}umber of {R}andom
  {G}raphs.
\newblock {\em Electron. J. Combin.}, 31(1):Paper No. 1.44, 2024.

\end{thebibliography}
\bibliographystyle{alpha}

\medskip
\pagebreak
\appendix

\section{Proof of Lemma \ref{regular}}

For the reader's convenience, we recall the statement of \Cref{regular}.

\begin{lemma}\label{lem:appendix}
Let $G\sim G(n,q)$, where $n^{-1}\log n\ll q\ll n^{-2/3}$, and let $\omega_0=\omega_0(n)$ be a function satisfying $\omega_0 \rightarrow \infty$ as $n \rightarrow \infty$ and $q \geq \omega^3_0 n^{-1}\log n$. Let $\cR$ be the event that $G$ has the following four properties:
\begin{itemize}
\item[$(i)$] Let $X_3$ be the number of triangles in $G$, then
$$X_3\leq n^3q^3.$$
\item[$(ii)$] For all $x,y\in V(G)$, 
$$e(N(x))\leq\omega_0\log n \qquad \textup{and} \qquad |N(x)\cap N(y)| \leq \omega_0.$$
\item[$(iii)$] For every $T\subset V(G)$ with $|T|\leq1/q$, 
$$\frac{nq|T|}{2} \leq |N(T)| \leq \frac{3nq|T|}{2}.$$
\item[$(iv)$] For every $T\subset V(G)$ with $1 \le q|T| \le \omega_0$,
$$|N(T)| \geq \big( 1 - e^{-q|T|/2} \big)n.$$
\end{itemize}

Then
\begin{align*}
 \Prob(\cR^c)=n^{-\omega(1)}.
\end{align*}
\end{lemma}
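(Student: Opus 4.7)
My strategy is to establish the four properties in the order (ii), (iii), (iv), (i). Properties (ii), (iii) and (iv) all concern random variables that are (conditionally) sums of independent Bernoulli indicators, and so can be handled by Chernoff bounds combined with union bounds. Property (i), by contrast, involves the number of triangles, which is not a sum of independent indicators; I would instead deduce it from (ii) via the deletion method of Janson, R\"odl and Ruci\'nski (Lemma~\ref{deletion}).

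For each fixed $T\subset V(G)$, the events $\{v\in N(T)\}$ for $v\in V(G)\setminus T$ depend on disjoint edge sets, so $|N(T)|\sim\mathrm{Bin}(n-|T|,1-(1-q)^{|T|})$, and the elementary bounds $1-(1-q)^{|T|}\in[q|T|/2,q|T|]$ when $q|T|\le 1$ and $1-(1-q)^{|T|}\ge 1-e^{-q|T|}$ otherwise let a standard Chernoff bound give (iii) and (iv) at a single $T$, after which the union bound over $\binom{n}{|T|}\le n^{|T|}$ sets is absorbed using \eqref{omega0}. For (ii), the codegree satisfies $|N(x)\cap N(y)|\sim\mathrm{Bin}(n-2,q^2)$ (the events $\{xz,yz\in E\}$ are independent for distinct $z$), and since $q\ll n^{-2/3}$ we have $nq^2\ll n^{-1/3}$, giving
\begin{align*}
\Prob\big(|N(x)\cap N(y)|\ge\omega_0\big)\le(nq^2)^{\omega_0}\le n^{-\omega_0/3},
\end{align*}
which is easily absorbed by the union bound over $\binom{n}{2}$ pairs since $\omega_0\to\infty$. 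For $e(N(x))$, which equals the number of triangles through $x$, I would condition on the event $|N(x)|\le 2nq$ (which holds for all $x$ with probability $1-n^{-\omega(1)}$ by Chernoff on the degrees); then $e(N(x))\sim\mathrm{Bin}(\binom{|N(x)|}{2},q)$ has expectation at most $2n^2q^3\le 2e^{-\omega_0}$, so Chernoff gives $\Prob(e(N(x))\ge\omega_0\log n)\le(\log n)^{-\omega_0\log n}$, which survives the union bound over $x$.

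For (i), I would apply Lemma~\ref{deletion} with $\mathcal{S}$ the family of all triangles in $K_n$, $k=3$, $X=X_3$, and $Y^*$ the maximum number of triangles of $G$ through any single edge (which equals the maximum codegree). Taking $t=5n^3q^3/6$, so that $\mathbb{E}[X]+t\le n^3q^3$, and $r=\omega_0\log^2 n$, Lemma~\ref{deletion} yields
\begin{align*}
\Prob\big(\mathcal{E}(r,t)\big)\le\exp\left(-\frac{rt}{3(2\mathbb{E}[X]+t)}\right)\le\exp(-5r/21)=n^{-\omega(1)},
\end{align*}
while $t/(2r)=5n^3q^3/(12\omega_0\log^2 n)\gg\omega_0$, since $nq\ge\omega_0^3\log n$ forces $n^3q^3\ge\omega_0^9\log^3 n$. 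Hence $\{Y^*>t/(2r)\}\subset\{Y^*>\omega_0\}$, which by (ii) has probability $n^{-\omega(1)}$, and therefore $\Prob(X_3>n^3q^3)=n^{-\omega(1)}$. The main obstacle in this otherwise routine argument is the bookkeeping involved in choosing $\omega_0$ to tend to infinity slowly enough that every Chernoff exponent above dominates the corresponding logarithmic union bound while also satisfying \eqref{omega0}; since all the constraints are upper bounds on $\omega_0$ by functions tending to infinity, a sufficiently slow-growing choice makes them simultaneously satisfiable.
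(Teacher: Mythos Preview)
Your treatment of parts (ii), (iii) and (iv) is essentially identical to the paper's: the same binomial identification for $|N(T)|$, the same two-stage conditioning for $e(N(x))$, and the same codegree bound $\binom{n}{\omega_0}q^{2\omega_0}\le(nq^2)^{\omega_0}$.

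The genuine difference is in part (i). The paper simply invokes the Kim--Vu / Janson--Oleszkiewicz--Ruci\'nski upper-tail theorem for triangle counts (Theorem~\ref{KVJOR}), which gives $\Prob(X_3>(1+\delta)\E[X_3])\le\exp(-c(\delta)n^2q^2)$ directly. You instead derive (i) from (ii) via the deletion method (Lemma~\ref{deletion}), exploiting that the codegree bound already controls $Y^*$. Your computation is correct: with $t=5n^3q^3/6$ and $r=\omega_0\log^2 n$ the event $\cE(r,t)$ has probability $n^{-\omega(1)}$, and $t/(2r)\gg\omega_0$ follows from $n^3q^3\ge\omega_0^9\log^3 n$. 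This route is slightly longer but has the advantage of being entirely self-contained within the tools already developed in the paper, avoiding the appeal to an external concentration theorem. The paper's approach is shorter but imports a nontrivial black box; yours trades that for a few lines of arithmetic with Lemma~\ref{deletion}.
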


To prove part $(i)$, we will need the following result of Kim and Vu~\cite{KV} and Janson, Oleszkiewicz and Ruciński~\cite{JOR} on the upper tail of the number of triangles in $G(n,p)$. 

\begin{theorem}[Kim and Vu; Janson, Oleszkiewicz and Ruciński]\label{KVJOR}
Let $G\sim G(n,q)$ with $q > n^{-1} \log n$ and let $X$ be the number of triangles in $G(n,q)$. For any $\delta>0$,
\begin{align*}
 \Prob\big( X >(1 + \delta)\E[X]\big)< \exp\big( - c(\delta) n^{2}q^{2} \big) 
\end{align*}
for some $c(\delta) > 0$.
\end{theorem}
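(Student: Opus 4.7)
The event $\cR$ is the intersection of four sub-events, so by the union bound it suffices to show that each of properties (i)--(iv) fails with probability $n^{-\omega(1)}$. I will treat each in turn, working throughout under the standing assumptions $n^{-1}\log n\ll q\ll n^{-2/3}$ and $\omega_0 \to \infty$ with $nq \geq \omega_0^3 \log n$.

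Property (i) follows at once from \Cref{KVJOR} with $\delta = 5$: since $\E[X_3] = \binom{n}{3}q^3 \leq n^3q^3/6$, the event $\{X_3 > n^3q^3\}$ is contained in $\{X_3 > 6\E[X_3]\}$, yielding $\Prob(X_3 > n^3q^3) \leq \exp\big(-c(5) n^2q^2\big) = n^{-\omega(1)}$, since $n^2q^2 \gg (\log n)^2$. For property (ii), I first reveal $N(x)$; by Chernoff $|N(x)| \leq 3nq$ with probability $1 - n^{-\omega(1)}$, and on this event $e(N(x))$ is stochastically dominated by $\mathrm{Bin}\big(\binom{3nq}{2}, q\big)$, of mean at most $\tfrac{9}{2}n^2q^3 = o(1)$ (using $q \ll n^{-2/3}$). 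The standard Chernoff upper-tail bound then gives
$$\Prob\big(e(N(x)) \geq \omega_0 \log n\big) \leq \bigg(\frac{en^2 q^3}{2\omega_0\log n}\bigg)^{\omega_0 \log n} = n^{-\omega(1)},$$
since $\log(1/(n^2q^3)) \to \infty$. A union bound over $x$ handles this, and for the common-neighbourhood estimate $|N(x)\cap N(y)|\sim \mathrm{Bin}(n-2,q^2)$ has mean at most $nq^2 \leq n^{-1/3}$, so Chernoff gives $\Prob(|N(x)\cap N(y)| \geq \omega_0) \leq (enq^2/\omega_0)^{\omega_0} \leq n^{-\omega_0/3}$, which is easily absorbed by the union bound over pairs.

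For property (iii), fix $T$ with $|T|=t \leq 1/q$. Then $|N(T)|\sim \mathrm{Bin}\big(n-t, 1-(1-q)^t\big)$ has mean $\mu_T \in [ntq/2, ntq]$, using $tq/2 \leq 1-(1-q)^t \leq tq$ for $tq \leq 1$. Multiplicative Chernoff gives
$$\Prob\big(|N(T)| \notin [ntq/2, 3ntq/2]\big) \leq 2 \exp\big(-\Omega(ntq)\big),$$
and since $ntq \geq nq \geq \omega_0^3\log n$, a union bound over the at most $n^t$ sets of size $t$, and then over $t \geq 1$, is easily absorbed.

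Property (iv) is the main obstacle: we need $|N(T)|$ to be essentially all of $V(G)$ even for fairly large $T$. Let $Y = |V(G) \setminus (T\cup N(T))| \sim \mathrm{Bin}(n-t,(1-q)^t)$, so $\E Y \leq ne^{-qt}$, while the bound $|N(T)| \geq (1-e^{-qt/2})n$ amounts to $Y \leq ne^{-qt/2}-t$; this exceeds $\E Y$ by a factor of roughly $e^{qt/2}$. Chernoff's upper tail therefore gives
$$\Prob\big(Y \geq ne^{-qt/2}-t\big) \leq \exp\big(-\Omega\big(qt \cdot ne^{-qt/2}\big)\big),$$
and to absorb a union bound over all $T$ with $|T| \leq \omega_0/q$ (costing a factor at most $n^{\omega_0/q}$), it suffices to have $nq \gg e^{\omega_0/2} \log n$. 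This is the key technical point: we choose $\omega_0 \to \infty$ slowly enough that both $nq \geq \omega_0^3\log n$ and $e^{\omega_0/2} \leq nq/\log n$ hold simultaneously. This is always possible under our hypotheses, since if we write $nq = f(n)\log n$ with $f(n) \to \infty$, then the choice $\omega_0 = 2\log f(n)$ works for all sufficiently large $n$, completing the proof.
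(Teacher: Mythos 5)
The statement you were asked to prove, Theorem~\ref{KVJOR}, is a cited external result of Kim--Vu and Janson--Oleszkiewicz--Ruci\'nski on the upper tail for triangle counts; the paper does not prove it, and your proposal does not prove it either. What you have actually proved is Lemma~\ref{regular} (restated as Lemma~\ref{lem:appendix}), the statement immediately preceding Theorem~\ref{KVJOR} in the appendix --- indeed your argument for property~$(i)$ \emph{invokes} Theorem~\ref{KVJOR} as a black box, so it cannot be a proof of that theorem. So as a proof of the stated theorem, this is a non-starter.

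Taking the charitable reading that the intended target was Lemma~\ref{regular}: your argument follows essentially the same route as the paper's for properties $(i)$--$(iii)$ (apply Theorem~\ref{KVJOR} for $(i)$; expose $N(x)$ first and then Chernoff the edges inside $N(x)$ for $(ii)$, with a minor cosmetic difference in that you use $3nq$ where the paper uses $2nq$ and a direct Chernoff bound on $\mathrm{Bin}(n-2,q^2)$ where the paper uses the explicit moment inequality $\binom{n}{k}q^{2k}\leq (nq^2)^k$; Chernoff on $\mathrm{Bin}(n-|T|,1-(1-q)^{|T|})$ for $(iii)$). Where you genuinely go beyond the paper is $(iv)$: the paper dismisses it with ``follows by Chernoff,'' whereas you pass to the complement $Y=|V\setminus(T\cup N(T))|\sim\mathrm{Bin}(n-t,(1-q)^t)$, work out the upper-tail bound, and --- importantly --- notice that the union bound over $T$ with $q|T|\le\omega_0$ forces the extra constraint $e^{\omega_0/2}\ll nq/\log n$, a condition not written explicitly in~\eqref{omega0}. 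That observation is correct and fills an elision in the paper (absorbed there by the phrase ``tending to infinity sufficiently slowly''). One small imprecision in your final sentence: the specific choice $\omega_0=2\log f(n)$ makes $e^{\omega_0/2}=nq/\log n$, which is equality rather than $\ll$, and moreover need not satisfy the paper's second requirement $n^2q^3\le e^{-\omega_0}$; you should instead take $\omega_0$ to be the minimum of several slowly-growing functions so that all of the constraints hold simultaneously, which is always possible since each right-hand side tends to infinity.
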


We remark that essentially optimal bounds on the upper tail were obtained recently by Harel, Mousset and Samotij~\cite{HMS}. 

\begin{proof}[Proof of Lemma~\ref{lem:appendix}]
Part {\rm(\romannumeral1)} follows immediately from Theorem~\ref{KVJOR}, since $\E[X_3] \le n^3q^3/2$ and $n^2q^2 \gg (\log n)^2$. For part {\rm(\romannumeral2)}, fix an arbitrary $x \in V(G)$, and reveal the edges of $G$ in two stages. First, we reveal $N(x)$, and observe that, by Chernoff's inequality, 
\begin{align}\label{eq:|N(x)|geq2nq}
\Prob(|N(x)|\geq 2nq) \leq e^{-\Theta(nq)} = n^{-\omega(1)}    
\end{align}
since $nq \gg \log n$. We then reveal the remaining edges of $G$, and observe that if $|N(x)| \leq 2nq$, then the expected number of edges in $N(x)$ is at most $2n^2q^3 \ll \omega_0\log n$. Therefore by Chernoff's inequality,
\begin{align}\label{eq:e(G[A])geqomega_0}
\Prob\big( e(N(x)) \geq \omega_0\log n \,\,\big|\,\, |N(x)| \leq 2nq \big) \leq e^{-\Theta(\omega_0\log n)} = n^{-\omega(1)}.    
\end{align}
Combining~\eqref{eq:|N(x)|geq2nq} and~\eqref{eq:e(G[A])geqomega_0}, and using a union bound over $x \in V(G)$, we deduce that 
\begin{align*}
\sum_{x \in V(G)} \Prob\big( e(N(x)) \geq \omega_0\log n \big) = n^{-\omega(1)}.
\end{align*}
For the second inequality in {\rm(\romannumeral2)}, first fix a pair of vertices $x,y\in V(G)$. By Markov's inequality, for every integer $k>0$,
\begin{align*}
\Prob\big(|N(x)\cap N(y)|\geq k\big)\leq\binom{n}{k}q^{2k}\leq (nq^2)^k.
\end{align*}
Setting $k=\omega_0$ and taking a union bound over all pairs $x,y\in V(G)$, it follows that
\begin{align*}
\Prob\Big(\bigcup_{x,y\in V(G)}|N(x)\cap N(y)|\geq \omega_0\Big)\leq n^2 \cdot (nq^2)^{\omega_0} = n^{-\omega(1)},
\end{align*}
as required. Finally, for parts {\rm(\romannumeral3)} and {\rm(\romannumeral4)}, note that for each $T \subset V(G)$, we have 
$$|N(T)| \sim \text{Bin}\big( n - |T|, 1 - (1-q)^{|T|} \big).$$
The claimed inequalities therefore follow by Chernoff's inequality.  
\end{proof}




\end{document}